\newcommand{\rad}{\operatorname{rad}}
\newcommand{\rank}{\operatorname{rank}} 
\newcommand{\corank}{\operatorname{corank}}
\newcommand{\Ker}{\operatorname{Ker}}
\newcommand{\Image}{\operatorname{Im}}
\newcommand{\Plucker}{\operatorname{Pl\ddot{u}cker}}
\newcommand{\abs}[1]{\left|\, #1 \, \right|}
\newcommand{\Sing}{\operatorname{Sing}}
\newcommand{\Sym}{\operatorname{Sym}}
\newcommand{\Gr}{\operatorname{Gr}}
\newcommand{\Corank}{\operatorname{Corank}}
\newcommand{\Hom}{\operatorname{Hom}}
\newcommand{\codim}{\operatorname{codim}}
\newcommand{\I}{\operatorname{\mathcal{I}}}
\newcommand{\V}{\operatorname{\mathcal{V}}}
\newcommand{\Coker}{\operatorname{Coker}}
\newcommand{\Mult}{\operatorname{Mult}}
\newtheorem{theorem}{Theorem}[section]
\newtheorem{corollary}[theorem]{Corollary}
\newtheorem{lemma}[theorem]{Lemma}
\newtheorem{proposition}[theorem]{Proposition}
\theoremstyle{remark}
\newtheorem{remark}[theorem]{Remark}
\newtheorem{example}[theorem]{Example}
\theoremstyle{definition}
\newtheorem{definition}[theorem]{Definition}
\begin{document}
\title{On the Singular Structure of Graph Hypersurfaces}
\author{Eric Patterson}
\maketitle

\begin{abstract}
We show that the singular loci of graph hypersurfaces correspond set-theoretically to their rank loci.
The proof holds for all configuration hypersurfaces and depends only on linear algebra.
To make the conclusion for the second graph hypersurface, 
we prove that the second graph polynomial is a configuration polynomial.
The result indicates that there may be a fruitful interplay between the current research in
graph hypersurfaces and Stratified Morse Theory.
\end{abstract}

\section{Introduction}
Let $G$ be a graph, $E$ its set of edges, and $V$ its set of vertices.
For each edge $e\in E$, let $A_e$ be an associated variable. 
The first graph polynomial is
\[
\Psi_G(A) = \sum_{F\subset G} \prod_{e\notin F} A_e
\]
where the sum is over the spanning forests of $G$.
A spanning forest is defined in Definition~\ref{D:spanningForest}
to be a subgraph of $G$ that is a spanning tree on each
component of $G$.
Similarly, the second graph polynomial is
\[
\Phi_G(p,A) = \sum_{T\subset G} s_T(p) \prod_{e\notin T} A_e 
\]
where the sum is over the quasi-spanning forests $T$ of $G$, $p\in K^{V,0}$ is a momentum of $G$, and
$s_T(p)$ is the momentum of $p$ on $T$ (see Definitions~\ref{D:quasiSpanningForest},~\ref{D:momentum}, and~\ref{D:qsfMomentum}). 
In the physics and graph theory literature,
these polynomials are also called Symanzik polynomials and Kirchoff polynomials.
They appear in parametric Feynman integrals in quantum field theory of the form 
\begin{equation} \label{eq:FeynmanIntegral}
I_G(p)= \int_{\sigma} dA\frac{\, e^{i\frac{\Phi_G(p,A)}{\Psi_G(A)}}\,}{\Psi_G(A)^2},
\end{equation}
where $\sigma$ is the domain of real values of the variables $A_e$ that sum to $1$ (see~\cite{IZ} or~\cite{BW}).
Broadhurst and Kreimer~\cite{BroadhurstKreimer95,BroadhurstKreimer97} 
related the values of $I_G(p)$ to multiple zeta values in many cases 
where the momentum $p$ factors out
of the integral trivially.

The results of Broadhurst and Kreimer motivated researchers to understand
the correspondence between graphs and multiple zeta values via motives. 
For the case of where $p$ factors out of $I_G(p)$, Bloch, Esnault, and Kreimer~\cite{BEK}
have interpreted equation~\eqref{eq:FeynmanIntegral}
as the period of the mixed Hodge structure of the graph hypersurface $X_G$ defined by $\Psi_G(A)=0$. 
In particular, the middle dimensional cohomology of the 
graph hypersurface needs to be computed.

For the case of wheel-and-spokes graphs, Bloch, Esnault, and Kreimer have
produced such a computation in~\cite{BEK}, and Doryn~\cite{DD} has extended
their techniques to zig-zag graphs.
Aluffi and Marcolli~\cite{AluffiMarcolli09} show that the
mixed Hodge structure for three loop graphs is mixed Tate,
which suggests that the period could be a multiple zeta value.
Brown~\cite{Brown09b} derived the mixed Tate conclusion
for a whole class of graphs for which he was also able
to prove that the Feynman integrals evaluate to multiple zeta values.
Brown's results provide an explanation for the known analytic computations
of Feynman integrals and suggest where to look for non-multiple zeta values.

Our goal is to extend the toolbox of cohomological tools
available to connect Feynman integrals and multiple
zeta values.
To this end, we prove some simple results on the structure
of the singular loci of graph hypersurfaces. 
We hope that these formulas may provide some tools 
for applying Stratified Morse Theory~\cite{StratifiedMorseTheory} to the
study of the periods of mixed Hodge structures of graph hypersurfaces.

In particular, we are able to identify the multiplicity
of the points in the graph hypersurfaces as specific rank
loci.
More specifically, following Proposition~2.2 in~\cite{BEK}, the first graph polynomial
is the determinant of a certain canonical bilinear form on $K^{E}$
restricted to the subspace $H_1(G,K)$. 
Here $K^E$ is 
the vector space over a field $K$ with basis $E$,
and $H_1(G,K)$ is the first homology of the graph $G$ with
coefficients in $K$.
Similarly, Proposition~\ref{P:secondGraphPoly} shows that the second graph polynomial
is the determinant of the same canonical bilinear form on $K^{E}$
but restricted to a relative homology subspace $H_1(G,p)$ 
defined in Definition~\ref{D:relativeHomology}.
Therefore, these polynomials can be analyzed with the linear
algebra of \emph{configurations} and \emph{configuration polynomials}.

The main result, Theorem~\ref{T:singularityRank}, proves that the multiplicity of a point
in the hypersurface defined by a configuration polynomial
is one less than its corank as a bilinear form.
In particular, the theorem holds for the first and second graph 
hypersurfaces by Proposition~2.2 in~\cite{BEK} and Proposition~\ref{P:secondGraphPoly} here.

The approach via configuration polynomials that we present in this paper
is complementary to the usual approach to graph polynomials
via the graph Laplacian (see the review~\cite{BW}).
In particular, the graph polynomials may be derived as
determinants of submatrices of the graph Laplacian, which is
the content of the all-minors matrix-tree theorem.
The graph Laplacian is
defined in terms of the vertices of the graph, and
the all-minors matrix-tree theorem must verify that
certain choices of submatrix do not affect the computation.
For comparison, the configuration approach
lifts the computation from the vertices to the edges
via the homology boundary map.
The choice of submatrix in the graph Laplacian approach
becomes the choice of
a representing element in a homology group for
the configuration approach.
This analogy suggests that the most general form
of the all-minors matrix-tree theorem
may have an analogous expression
in terms of configuration polynomials.

The configuration hypersurfaces naturally map to the generic
symmetric determinantal varieties. 
Moreover, the generic symmetric determinantal varieties satisfy
the same relationship between the corank and the multiplicity
of their points (e.g., see Theorem~22.33 in~\cite{Harris}).
The basic idea behind applying Stratified Morse Theory suggested
by Bloch~\cite{BlochSlides} is to find a stratified Morse function
for the generic symmetric determinantal variety relative to its rank stratification,
which could draw on a host of well-known results about the geometry of these
varieties. 
Provided  such a function, its restriction to 
the configuration hypersurface would be a stratified Morse function,
and 
we hope that our result, by identifying the strata by their multiplicity,
could provide some ideas for
how to generate new results on the topology of the configuration 
hypersurface.
We also note that Brown~\cite{Brown09b} uses Stratified Morse Theory
following a different approach than the one just mentioned.

Among applications that we can prove, we provide a formula for the tangent
cones of a graph hypersurface over an algebraically closed field of
characteristic zero
in Proposition~\ref{P:configurationTangentCone}.
In addition, $H_1(G)$ is a subspace of $H_1(G,p)$, which provides
some further relationships between the first and second graph
hypersurfaces, for which we refer to~\cite{MyThesis}.
For some applications, 
one may need to strengthen our results
from the perspectives of
both algebraic geometry and physics.
Our identification of multiplicity loci with degeneracy
loci does not make the identification at the scheme level,
and we only work in the context of a scalar field.
Bloch and Kreimer show how to extend the configuration definition
of the second graph polynomial
from a scalar field to quaternions in~\cite{BK10}.

We begin in Section~\ref{S:bilinearForms} with
some elementary results on bilinear forms.
Section~\ref{S:configurations} recalls the
definitions of configurations, configuration polynomials,
and how graph polynomials fit into these definitions.
Section~\ref{S:hypersurfaces} proves the main
multiplicity-corank correspondence and its consequences.
We conclude in Section~\ref{S:conclusion}.

We would like to thank Spencer Bloch for his invaluable
guidance during this research and
continued encouragement to provide these results to the 
mathematical community.

\section{Symmetric Bilinear Forms}\label{S:bilinearForms}
This section has two purposes.
First, it reviews the terminology and notation on symmetric bilinear forms that we will use.
Second, it proves a fundamental lemma about degenerate symmetric bilinear forms: Lemma~\ref{L:lowRankSubspace}.
The results generalize to nonsymmetric bilinear forms, but there are no uses of that generality in this paper.
The reader familiar with degenerate bilinear forms may prefer to skip this section.
The reader seeking more details should consult~\cite{MyThesis}.

\subsection{Notation and Basic Definitions}
We assume all vector spaces are finite dimensional.
Let $V$ be a vector space over a field $K$. Denote the dual space by $V\spcheck=\Hom_K(V,K)$.
A \emph{bilinear form} $B$ on $V$ is an element of 
$(V\otimes V)\spcheck= V\spcheck\otimes V\spcheck$.
A bilinear form $B$ is \emph{symmetric} if $B(v\otimes w)= B(w\otimes v)$ for all $v$, $w\in V$.

A symmetric bilinear form $B$ defines a linear map from $V$ to $V\spcheck$:
\begin{align*}
0 \to \rad (V,B) \to  V&\xrightarrow{\ell_B}  V\spcheck \\
\ell_B(v)&=B(v,\cdot)=B(v\otimes \cdot).
\end{align*}
\begin{definition}[Radical]
The \emph{radical of $B$ in $V$}, $\rad (V,B)$, is 
\[
\rad (V,B) = \Ker \ell_B=\{ v\in V \, |\, B(v,w)=0 \mbox{ for all }w\in V\}.
\] 
\end{definition}
\begin{definition}[Degenerate]\label{D:degenerateBilinearForm}
The bilinear form $B$ is called \emph{degenerate} if $\rad (V,B)=0$.
\end{definition}
We will often work with one symmetric bilinear form $B$ and restrict it to various subspaces $W$ of $V$. In that case, we may suppress
the $B$ from the notation and write $\rad V$ for $\rad (V,B)$, $\rad W$ for $\rad (W,B|_W)$.

The rank of $B$ on $V$ is the rank of the linear map $\ell_B$, and the corank of $B$ on $V$ is 
\[
\corank B = \dim \rad (V,B) = \dim V-\rank B.
\]
When we restrict
$B$ to a subspace $W$, we will let
$\rank_W B$ and $\corank_W B$
denote $\rank\left( B|_W\right)$ and $\corank\left( B|_W\right)$. In particular,
$\rank_V B$ and $\corank_V B$ are synonyms for $\rank B$ and $\corank B$ that include $V$ in the notation.
The following lemma is well-known, and we will not include the proof. 
We make frequent use of the equivalence of these conditions, often without comment. 
\begin{lemma}
Suppose that $B$ is a symmetric bilinear form on $V$. 
The following conditions on $B$ are equivalent:
\begin{enumerate}
\item $\corank B > \dim V -k$,
\item $\rank B < k$, and
\item all $k\times k$ minors of $M$ vanish for a matrix $M$ representing $B$
in some basis of $V$.
\end{enumerate}
\end{lemma}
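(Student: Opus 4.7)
The plan is to show $(1)\Leftrightarrow(2)$ by a direct dimension count, and then $(2)\Leftrightarrow(3)$ by invoking the standard fact that the rank of a matrix equals the size of its largest nonvanishing minor.

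For $(1)\Leftrightarrow(2)$, I would simply use the identity $\corank B = \dim V - \rank B$ given in the definition just above the lemma. Rearranging, $\corank B > \dim V - k$ is equivalent to $\dim V - \rank B > \dim V - k$, which is equivalent to $\rank B < k$.

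For $(2)\Leftrightarrow(3)$, I would first note that if $M$ is any matrix representing $B$ in a basis of $V$, then $\rank B = \rank \ell_B = \rank M$, since the matrix of $\ell_B$ with respect to the chosen basis of $V$ and its dual basis of $V\spcheck$ is exactly $M$. The classical fact from linear algebra states that the rank of a matrix is the largest integer $r$ such that some $r\times r$ minor is nonzero; equivalently, $\rank M < k$ if and only if every $k\times k$ minor of $M$ vanishes. Combining these two observations yields $(2)\Leftrightarrow(3)$. Since rank is a basis-independent invariant, the phrasing in terms of ``some basis'' is equivalent to ``every basis,'' so no ambiguity arises.

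The proof has no real obstacle: each implication reduces to a one-line computation or a citation of a standard fact. If I wanted to be fully self-contained I would spend a sentence recalling why the rank of a matrix equals the maximal size of a nonvanishing minor (pick a maximal set of linearly independent rows and columns to obtain an invertible submatrix), but for a lemma the authors explicitly mark as well-known this is probably unnecessary.
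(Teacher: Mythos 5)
Your proof is correct. The paper itself declines to prove this lemma, stating only that it is well-known and omitting the argument, so there is no in-text proof to compare against; your two-step reduction (corank--rank identity, then rank equals maximal nonvanishing minor size) is exactly the standard argument one would expect, and your remark that rank is basis-independent properly disposes of the ``some basis'' phrasing.
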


A \emph{hyperplane} in $V$ is a linear subspace of $V$ of codimension one (i.e., we are 
considering hyperplanes in the linear subspace sense, not the affine sense).
\begin{definition}
A set of hyperplanes $\{H_1,\dots,H_k\}$ in $V$ is \emph{complete} if $\cap_{i=1}^k H_i =0$.
Note that $k$ may be greater
than the dimension of $V$ in this definition.
\end{definition}
Every hyperplane is the kernel of a linear form on $V$, and scaling the linear form does not change the kernel. 
Therefore,
each hyperplane $H$ in $V$ is identified with a point $H\spcheck$ in $\mathbb{P}(V\spcheck)$.
We mention this well-known fact so the following proposition makes sense.
\begin{proposition}[Complete Equivalent to Spanning]
A set of hyperplanes $\{ H_1,\dots, H_k\}$ in $V$ is complete if and only if $\{ H_1\spcheck,\dots,H_k\spcheck\}$ span $V\spcheck$.
\end{proposition}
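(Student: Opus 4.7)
The plan is to reduce both conditions to a single annihilator computation and then apply the standard dimension formula for annihilators in a finite-dimensional vector space.

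First, I would fix notation by choosing, for each hyperplane $H_i$, a nonzero linear form $\phi_i \in V\spcheck$ with $H_i = \Ker \phi_i$. By the paragraph preceding the proposition, $H_i\spcheck$ is precisely the point $[\phi_i] \in \mathbb{P}(V\spcheck)$, so $\{H_1\spcheck,\dots,H_k\spcheck\}$ spans $V\spcheck$ if and only if the vectors $\phi_1,\dots,\phi_k$ span $V\spcheck$. Hence the proposition reduces to the claim that
\[
\bigcap_{i=1}^k \Ker \phi_i = 0 \iff \mathrm{span}\{\phi_1,\dots,\phi_k\} = V\spcheck.
\]

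The key step is the annihilator identity. Let $W = \mathrm{span}\{\phi_1,\dots,\phi_k\} \subseteq V\spcheck$, and let $W^{\perp} = \{v \in V : \phi(v)=0 \text{ for all } \phi \in W\}$. Since $W$ is spanned by the $\phi_i$, a vector $v$ lies in $W^{\perp}$ if and only if $\phi_i(v)=0$ for every $i$, so
\[
W^{\perp} = \bigcap_{i=1}^k \Ker \phi_i.
\]
Now I invoke the standard fact (valid because $V$ is finite-dimensional) that $\dim W + \dim W^{\perp} = \dim V\spcheck = \dim V$. Consequently $W^{\perp} = 0$ is equivalent to $\dim W = \dim V\spcheck$, which is equivalent to $W = V\spcheck$. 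Combining this with the reduction above gives the proposition.

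There is no real obstacle; the only subtlety is making sure one remembers that the intersection of the $H_i$ does not depend on the choice of defining forms $\phi_i$ (rescaling $\phi_i$ preserves $\Ker \phi_i$), which is exactly why passing to points $H_i\spcheck \in \mathbb{P}(V\spcheck)$ rather than to vectors in $V\spcheck$ is the right formulation. Finite-dimensionality is essential for the dimension formula $\dim W + \dim W^{\perp} = \dim V$, and it is already in force by the convention at the start of the section.
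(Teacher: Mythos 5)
Your proof is correct. The paper itself does not supply a proof of this proposition, remarking only that the fact is well-known; your argument via the annihilator identity $W^{\perp} = \bigcap_i \Ker\phi_i$ together with the dimension formula $\dim W + \dim W^{\perp} = \dim V$ is the standard one and fills that omission cleanly, correctly invoking finite-dimensionality where it is needed.
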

This fact is well-known, so we will not repeat the proof.
Spanning is probably the preferred adjective,
but we prefer the term complete to emphasize that
we will use the intersection property. 

\subsection{Fundamental Lemmas}
The main result of this section is Lemma~\ref{L:lowRankSubspace}, which translates
bounds on the rank of a degenerate symmetric bilinear form restricted to a complete set of hypersurfaces
to a bound on the rank of the unrestricted bilinear form.
It is the key lemma in Theorem~\ref{T:singularityRank}, which identifies the multiplicities
of configuration hypersurfaces with rank loci.

Let $V$ be a vector space, and let $B$ be a symmetric bilinear form on $V$.
For every subspace $W$ of $V$, the radicals of $B$ fit into the following diagram:
\begin{equation}\label{D:basicRadDiagram}
\begin{diagram}
0 & \rTo & \rad W & \rInto & W & \rTo^{\ell_{(B|_W)}} & W\spcheck \\
   &   &  &           & \dInto & \rdTo^{(\ell_B)|_W} & \uOnto \\
0 & \rTo & \rad V  & \rInto & V & \rTo^{\ell_B}  & V\spcheck. \\
\end{diagram}
\end{equation}
The rows are left exact by the definition of the radical.
A key point to note is that there is no natural map between the radicals of $V$ and $W$. 

\begin{lemma} \label{L:radicalIntersection}
Let $B$ be a symmetric bilinear form on a vector space $V$, and let $W$ be a subspace of $V$. Then
\[
W \cap \rad V \subseteq \rad W.
\]
\end{lemma}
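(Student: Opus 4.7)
The statement is immediate from unwinding definitions, so the plan is essentially to spell out the membership chase. I would take an arbitrary element $v \in W \cap \rad V$ and show $v \in \rad W$ by verifying the defining condition.

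First, since $v \in \rad V$, by the definition of the radical we have $B(v, u) = 0$ for every $u \in V$. Second, since $W$ is a subspace of $V$, any $w \in W$ is in particular an element of $V$, so $B(v, w) = 0$ for every $w \in W$. Combined with the hypothesis $v \in W$, this says exactly that $v \in \rad W = \rad(W, B|_W)$, which is the desired containment.

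There is no real obstacle here; the only subtlety worth noting is the one already flagged in diagram~\eqref{D:basicRadDiagram}, namely that $\rad V$ and $\rad W$ do not sit inside one another in general, so the lemma is asserting the weaker containment that uses the intersection with $W$ on the left. The proof takes only a few lines and needs no appeal to bases or to the symmetry of $B$ beyond what is already built into the definition of $\rad V$.
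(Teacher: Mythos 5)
Your proof is correct and is essentially identical to the paper's own argument: take $w \in W \cap \rad V$, observe that $B(v,w) = 0$ for all $v \in V$ and hence for all $v \in W$, and conclude $w \in \rad W$. The paper also notes a second, diagram-chasing phrasing of the same fact (that $W \cap \rad V$ is the kernel of the diagonal map $(\ell_B)|_W$ in diagram~\eqref{D:basicRadDiagram}), but this is just a repackaging of what you wrote, not a different argument.
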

\begin{proof}
Consider an element $w \in W\cap \rad V $. As an element of $\rad V $, $B(v,w)=0$ for
all $v \in V$ and, in particular, for all $v\in W\subseteq V$. Because $w$ is in $W$, this proves that $w\in \rad W$.

Alternatively, note that $W\cap \rad V$ is the kernel of the diagonal map, $ (\ell_B)|_W$, in the diagram~\eqref{D:basicRadDiagram}. 
This kernel is a subspace of $\rad W$, which is a consequence of the commutativity
of the diagram. 
\end{proof}
\begin{corollary} \label{C:radicalContainment}
If $\rad V\subseteq W$, then $\rad V \subseteq \rad W $.
\end{corollary}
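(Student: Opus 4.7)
The plan is to derive this immediately from Lemma~\ref{L:radicalIntersection}. That lemma asserts $W \cap \rad V \subseteq \rad W$ with no hypothesis on $W$ other than being a subspace of $V$. The corollary simply specializes this to the case where $\rad V$ happens to sit inside $W$.

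Specifically, I would argue as follows. Starting from the containment $\rad V \subseteq W$, intersecting both sides with $W$ gives $W \cap \rad V = \rad V$. Substituting this equality into the conclusion of Lemma~\ref{L:radicalIntersection} yields $\rad V \subseteq \rad W$, which is exactly what we want.

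There is essentially no obstacle here; the corollary is a one-line set-theoretic consequence of the previous lemma. The only thing worth flagging is that the symmetry of $B$ (and hence the well-definedness of the radical as a two-sided object) is already built into the setup of Lemma~\ref{L:radicalIntersection}, so no additional hypotheses need to be invoked. The proof should therefore consist of a single sentence noting that the hypothesis $\rad V \subseteq W$ forces $W \cap \rad V = \rad V$ and then citing the lemma.
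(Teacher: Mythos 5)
Your proof is correct and matches the paper's intent: the paper states this as an unproved corollary immediately after Lemma~\ref{L:radicalIntersection} precisely because the hypothesis $\rad V \subseteq W$ turns $W \cap \rad V$ into $\rad V$, making the conclusion a direct restatement of the lemma. Nothing further is needed.
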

Let $S$ be a subset of $V$, and define the \emph{subspace orthogonal to $S$},
\[
\bot(S) = \{ v\in V \, |\, B(v,s)=0 \mbox{ for all } s\in S \}.
\]
For every set $S$, $\bot(S)$ contains $\rad V$.
Note that $W$ is always contained in $\bot(\rad W)$.
Therefore,
\begin{lemma}\label{L:orthogonalRadical}
For every subspace $W$ of $V$,
\[
W+\rad V \subseteq \bot( \rad W ).
\]
\end{lemma}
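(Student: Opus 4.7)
The plan is to prove the inclusion by decomposing an arbitrary element of $W + \rad V$ into its two summands and checking each lies in $\bot(\rad W)$. The proof is essentially a one-line computation, so the main task is to make sure we invoke the right definitions in the right order and cite the observation preceding the lemma.

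First I would unpack the statement: we must show that every element of the form $w + r$, with $w \in W$ and $r \in \rad V$, pairs trivially under $B$ with every element $s \in \rad W$. By bilinearity, $B(w+r,s) = B(w,s) + B(r,s)$, so it suffices to handle the two terms separately. For the first term, note that $s \in \rad W$ means $B(w',s) = 0$ for all $w' \in W$; since $w \in W$, this gives $B(w,s) = 0$. This is precisely the observation just before the lemma statement that $W \subseteq \bot(\rad W)$.

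For the second term, $r \in \rad V$ means $B(v,r) = 0$ for all $v \in V$, so in particular $B(r,s) = B(s,r) = 0$ (using symmetry of $B$, and the fact that $s \in \rad W \subseteq W \subseteq V$). This also reflects the general fact, noted in the paragraph above the lemma, that $\rad V \subseteq \bot(S)$ for every subset $S$ of $V$, applied with $S = \rad W$.

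Combining the two vanishings yields $B(w+r, s) = 0$ for every $s \in \rad W$, so $w + r \in \bot(\rad W)$, and the inclusion follows. There is no real obstacle here; the only potential pitfall would be to forget the definition of $\bot(\cdot)$ or to confuse $\rad W$ (which lives inside $W$) with $\rad V$ restricted to $W$, but both of these are immediately resolved by consulting the definitions given just before the lemma.
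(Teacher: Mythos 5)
Your proof is correct and is essentially the same argument the paper intends: the paper states the lemma immediately after observing that $W \subseteq \bot(\rad W)$ and that $\rad V \subseteq \bot(S)$ for any $S$, and leaves the conclusion implicit since $\bot(\rad W)$ is a subspace. You simply unpack that reasoning at the level of elements, which is a fine way to make the one-line argument explicit.
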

\begin{corollary}\label{C:subspaceRadSpan}
If $W+\rad V = V$, then $\rad W \subseteq \rad V$.
\end{corollary}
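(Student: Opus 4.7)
The plan is to derive the corollary directly from Lemma~\ref{L:orthogonalRadical} by a one-line substitution. First I would recall that the lemma gives the chain of inclusions $W + \rad V \subseteq \bot(\rad W)$, and the hypothesis of the corollary supplies the equality $W + \rad V = V$. Combining these immediately forces $V \subseteq \bot(\rad W)$.

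Next, I would unwind what $V \subseteq \bot(\rad W)$ means in terms of the bilinear form $B$: for every $w \in \rad W$ and every $v \in V$, we have $B(v, w) = 0$. By the definition of the radical of $B$ on $V$, this is exactly the condition $w \in \rad V$. Therefore $\rad W \subseteq \rad V$, which is the desired conclusion.

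There is essentially no obstacle here; the corollary is just the translation of Lemma~\ref{L:orthogonalRadical} under the surjectivity hypothesis $W + \rad V = V$, combined with the tautology that $\bot(\rad W) = V$ says precisely that $\rad W$ lies in $\rad V$. The only thing to be careful about is not to confuse the directions of the two containments between $\rad V$ and $\rad W$: Corollary~\ref{C:radicalContainment} gives $\rad V \subseteq \rad W$ under the hypothesis $\rad V \subseteq W$, whereas the present corollary gives the reverse containment $\rad W \subseteq \rad V$ under the stronger hypothesis $W + \rad V = V$. Together they record the two complementary situations in which the radicals of $B$ and of $B|_W$ can be compared despite the absence of a natural map between them in diagram~\eqref{D:basicRadDiagram}.
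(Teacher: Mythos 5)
Your proof is correct and is essentially identical to the paper's: both apply Lemma~\ref{L:orthogonalRadical} to the hypothesis $W + \rad V = V$ to conclude $V \subseteq \bot(\rad W)$, and then unwind the definition of $\bot(\rad W)$ to get $\rad W \subseteq \rad V$. The closing remark contrasting this corollary with Corollary~\ref{C:radicalContainment} is a helpful observation but not part of the argument itself.
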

\begin{proof}
By Lemma~\ref{L:orthogonalRadical}, 
\[
	V=W+\rad V \subseteq \bot(\rad W),
	\]
which implies $V=\bot(\rad W )$.
In other words, $B(v,w)=0$ for every $v\in V$ and every $w\in \rad W$, 
which means $\rad W\subseteq \rad V$.
\end{proof}
\begin{corollary}\label{C:hyperplaneLemma}
If $H$ is a hyperplane in $V$, then $\rad V \subseteq \rad H$ or $\rad H\subseteq \rad V$.
\end{corollary}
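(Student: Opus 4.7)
The plan is to obtain the dichotomy by splitting on whether or not $\rad V$ is contained in $H$, and then apply each of the two immediately preceding corollaries to the two cases. The key observation is that since $H$ has codimension one in $V$, there are only two possibilities for how $\rad V$ can sit relative to $H$: either $\rad V \subseteq H$, or $\rad V$ contains at least one vector outside of $H$, in which case $H + \rad V$ is a subspace strictly larger than $H$ and therefore equals $V$.

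First I would handle the case $\rad V \subseteq H$. This is precisely the hypothesis of Corollary~\ref{C:radicalContainment} with $W = H$, so that corollary directly gives $\rad V \subseteq \rad H$, which is the first alternative in the conclusion.

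Next I would handle the case $\rad V \not\subseteq H$. Picking any $v \in \rad V \setminus H$, the subspace $H + Kv$ strictly contains the hyperplane $H$ and so must equal $V$; a fortiori $H + \rad V = V$. This is exactly the hypothesis of Corollary~\ref{C:subspaceRadSpan} applied to $W = H$, giving $\rad H \subseteq \rad V$, which is the second alternative in the conclusion.

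I do not anticipate a real obstacle: the entire content is the codimension-one dichotomy, and the two previous corollaries have been arranged to cover precisely the two cases. The only subtlety worth stating carefully is why $\rad V \not\subseteq H$ forces $H + \rad V = V$, and that follows from the elementary fact that a hyperplane is maximal among proper subspaces.
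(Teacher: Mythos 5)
Your proof is correct and follows essentially the same approach as the paper: split on whether $\rad V \subseteq H$, apply Corollary~\ref{C:radicalContainment} in the contained case and Corollary~\ref{C:subspaceRadSpan} (after noting $H + \rad V = V$ by maximality of the hyperplane) in the other.
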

\begin{proof}
If $\rad V \subseteq H$, then $\rad V \subseteq \rad H$ by Corollary~\ref{C:radicalContainment}.
If $\rad V\nsubseteq H$, then $H+\rad V=V$ because $H$ is a hyperplane. 
Then Corollary~\ref{C:subspaceRadSpan} implies that $\rad H\subseteq \rad V$.
\end{proof}
\begin{lemma} \label{L:rankDimensionDegenerate}
If $B$ is degenerate on $V$, then every complete set of hyperplanes has an element $H$ for which 
$\rank_V B = \rank_H B$. 
\end{lemma}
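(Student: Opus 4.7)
The plan is to use completeness to locate a hyperplane $H$ that fails to contain $\rad V$, and then to show that its radical is precisely $H\cap \rad V$, which sits as a codimension-one subspace inside $\rad V$. Once that is established, the equality of ranks falls out of a one-line dimension count.

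First, I interpret the hypothesis as $\rad V \neq 0$ (the nontrivial case of the statement). Since the hyperplanes $\{H_1,\dots,H_k\}$ are complete, $\bigcap_i H_i = 0$, so $\rad V$ cannot be contained in every $H_i$. Pick any $H=H_i$ with $\rad V \nsubseteq H$; because $H$ is a hyperplane, this forces $H+\rad V = V$. This is the step that actually uses completeness, and it is the only place where completeness is needed.

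Second, I invoke the dichotomy from Corollary~\ref{C:hyperplaneLemma}. Since $\rad V \nsubseteq H$, we are in the second alternative, so by Corollary~\ref{C:subspaceRadSpan}, $\rad H \subseteq \rad V$. Combined with Lemma~\ref{L:radicalIntersection} applied to $W=H$, which gives $H\cap \rad V \subseteq \rad H$, I get
\[
H\cap \rad V \ \subseteq\ \rad H \ \subseteq\ \rad V.
\]
Because $\rad H\subseteq H$ while $\rad V \nsubseteq H$, the right inclusion is strict, so $\rad H \subseteq H\cap \rad V$; hence $\rad H = H\cap \rad V$. A dimension count using $H+\rad V=V$ yields $\dim(H\cap \rad V) = \dim H + \dim \rad V - \dim V = \dim \rad V - 1$, and therefore
\[
\rank_H B = \dim H - \dim \rad H = (\dim V - 1)-(\dim \rad V - 1) = \rank_V B.
\]

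The main obstacle — really the only substantive step — is the completeness argument that produces a hyperplane missing $\rad V$; everything afterward is a straightforward sandwiching of $\rad H$ between $H\cap \rad V$ and $\rad V$ using the corollaries already proved. I expect no further difficulty because the exact identification $\rad H = H\cap \rad V$ is forced as soon as $H$ is transverse to $\rad V$.
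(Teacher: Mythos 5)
Your proof is correct, and it takes a genuinely different route to close the argument after the initial completeness step. Both you and the paper begin identically: completeness plus $\rad V\neq 0$ forces some hyperplane $H$ in the complete set with $\rad V\nsubseteq H$. From there the paper shows only the strict inclusion $\rad H\varsubsetneq \rad V$, derives $\rank_H B\geq \rank_V B$ from the corank inequality, and then separately proves $\rank_H B\leq \rank_V B$ by observing that $\ell_{(B|_H)}$ factors through $\ell_B$ in diagram~\eqref{D:basicRadDiagram}, so its image cannot be larger. You instead pin down $\rad H$ exactly: the sandwich $H\cap\rad V\subseteq \rad H\subseteq \rad V$ (from Lemma~\ref{L:radicalIntersection} and Corollary~\ref{C:subspaceRadSpan}) together with $\rad H\subseteq H$ forces $\rad H=H\cap\rad V$, and then the single dimension count $\dim(H\cap\rad V)=\dim\rad V-1$ (valid because $H+\rad V=V$) gives $\rank_H B=\rank_V B$ in one stroke. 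Your version trades the paper's two-sided inequality and its appeal to the image-factorization diagram for a cleaner structural identity on the radicals; the paper's version avoids the dimension count but needs the extra direction $\rank_H B\leq \rank_V B$ proved independently. Both are valid; yours is arguably tighter since it yields the stronger fact $\rad H=H\cap\rad V$ as a byproduct. One small stylistic note: your remark that the right inclusion $\rad H\subseteq\rad V$ is strict is true but not needed — the conclusion $\rad H\subseteq H\cap\rad V$ already follows from $\rad H\subseteq H$ (which always holds) and $\rad H\subseteq\rad V$.
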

\begin{proof}
If $\rad V$ is contained in every hyperplane in a complete set, 
then $\rad V$ is contained in their intersection, which is zero.
However $B$ is degenerate, so $\rad V$ must be nonzero, and therefore, 
there is at least one hyperplane $H$ in
the complete set for which $\rad V\nsubseteq H$.
By Corollaries~\ref{C:radicalContainment} and~\ref{C:hyperplaneLemma}, $\rad H\varsubsetneq \rad V$.
Therefore, $\corank_H B < \corank_V B$, and thus,
\begin{align*}
n-1-\rank_H B &< n- \rank_V B \\
\rank_H B &> \rank_V B -1\\
\rank_H B &\geq \rank_V B.
\end{align*}
The ranks are equal to the dimensions of the images of $\ell_{(B|_H)}$
and $\ell_B$, respectively. The factorization of $\ell_{(B|_H)}$
depicted in diagram~\eqref{D:basicRadDiagram} implies that
\[
\dim \Image \ell_{(B|_H)} \leq \dim \Image \ell_B;
\]
hence $\rank_H B = \rank_V B$.
\end{proof}

\begin{lemma}[Criterion to Bound Rank] \label{L:lowRankSubspace}
Suppose that $B$ is a  degenerate symmetric bilinear form on $V$ and $k$ is an integer, $1\leq k< \dim V$.
Let $\{H_1,\dots,H_{\ell}\}$ be a complete set of hyperplanes in $V$.
For a nonempty subset $J\subseteq \{ 1,\dots, \ell\}$, define
\[
H_J = \cap_{j\in J} H_j.
\]
If $B|_{H_J}$ is degenerate for all subsets $J$ with $\abs{J}\leq k$, then
\[
\rank_V B < \dim V -k.
\]
\end{lemma}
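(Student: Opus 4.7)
The plan is to prove the bound by constructing, step by step, a decreasing chain of subspaces
\[
V = V_0 \supset V_1 \supset \dots \supset V_k,
\]
where each $V_i$ has the form $H_{j_1}\cap \dots \cap H_{j_i}$ for some indices, each $V_{i+1}$ is a proper hyperplane of $V_i$, and (crucially) $\rank_{V_i} B = \rank_V B$ for every $i$. Once such a chain is built, $\dim V_k = \dim V - k$. The hypothesis applies at the final stage since $|J|=k$, so $B|_{V_k}$ is degenerate and hence $\corank_{V_k} B \geq 1$. Combining with $\rank_{V_k} B = \rank_V B$ gives $\rank_V B \leq \dim V_k - 1 = \dim V - k - 1$, which is exactly the desired strict inequality.

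The chain is built inductively. Set $V_0 = V$. Given $V_i$ with $i < k$, note that $B|_{V_i}$ is degenerate by the hypothesis (since $i \leq k-1 < k$ and $V_i = H_J$ for some $|J|=i$). For the induction to proceed I need a complete set of hyperplanes inside $V_i$ to which I can apply Lemma~\ref{L:rankDimensionDegenerate}. The natural candidates are the nontrivial traces $\{H_j \cap V_i : H_j \not\supseteq V_i\}$. Each is indeed a hyperplane in $V_i$, and the set is complete in $V_i$: the indices $j$ with $H_j \supseteq V_i$ contribute $V_i$ to the intersection and can be discarded, while the remaining traces intersect in $V_i \cap \bigcap_j H_j = V_i \cap 0 = 0$. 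Lemma~\ref{L:rankDimensionDegenerate} then supplies an index $j_{i+1}$, with $H_{j_{i+1}}\not\supseteq V_i$, such that $V_{i+1} := V_i \cap H_{j_{i+1}}$ is a proper hyperplane of $V_i$ with $\rank_{V_{i+1}} B = \rank_{V_i} B$. Chaining these equalities preserves $\rank_{V_i}B = \rank_V B$ at every step.

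The step I expect to be the main obstacle is precisely the verification that $\{H_j \cap V_i : H_j \not\supseteq V_i\}$ forms a complete set of hyperplanes of $V_i$, since without this verification Lemma~\ref{L:rankDimensionDegenerate} cannot be applied inside $V_i$. A minor nuisance is confirming that the construction does not degenerate — i.e., that $V_i \neq 0$ and that at least one $H_j$ fails to contain $V_i$ — but both follow from $B|_{V_i}$ being degenerate (so $V_i \neq 0$) and from $V_i \neq 0$ together with completeness of the original family $\{H_1,\dots,H_\ell\}$. With these pieces in hand, the rank computation in the first paragraph closes the argument.
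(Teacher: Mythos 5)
Your proof is correct and takes essentially the same route as the paper's: both apply Lemma~\ref{L:rankDimensionDegenerate} iteratively, using the invariant $\rank_{V_i}B = \rank_V B$ and the completeness of the restricted hyperplane family at each stage. You present it as an explicit chain of length $k$ where the paper phrases it as induction on $\dim V$ (reducing both $\dim V$ and $k$ by one per step), but the underlying argument is identical; if anything, your version more cleanly handles the $k=1$ case, which the paper's inductive step elides.
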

\begin{proof}
Proceed by induction on $\dim V$. For each step of the induction, we must verify the
lemma for the whole set of positive integers $k$ less than $\dim V$.
If $\dim V=1$, there are no positive integers $k$ less than $\dim V$
and no nonzero hyperplanes, so the statement is empty and there is nothing to prove.

If $\dim V=2$, then $k=1$ is the only positive integer less than $\dim V$. 
By Lemma~\ref{L:rankDimensionDegenerate}, there is a hyperplane (i.e., a line) $H_j$ in the complete set
for which
\[
\rank_V B = \rank_{H_j} B.
\]
By assumption, $B|_{H_j}$ is degenerate, so $\rank_{H_j} B < \dim H_j = 1$. That is, $\rank_V B =0$, which is less than
$n-k=2-1=1$.

Suppose the lemma is true for vector spaces of dimension $n-1$, and suppose $\dim V =n$.
By Lemma~\ref{L:rankDimensionDegenerate}, there is a hyperplane in the complete set, say $H_1$,
for which
\[
\rank_V B = \rank_{H_1} B.
\]
The inductive hypothesis applies to the hyperplane $H_1$ and the complete set of hyperplanes in $H_1$:
\[
	\{ H_1\cap H_j\, |\, j=2,\dots,\ell\mbox{ and } H_1\neq H_j \}.
\]
By assumption, $B|_{H_J}$ is degenerate for all subsets $J\subseteq \{1,\dots,\ell\}$ with $\abs{J}\leq k$.
In particular, $\left( B|_{H_1} \right)|_{H_{\tilde{J}}}$ is degenerate for each set $\tilde{J}
\subseteq \{2,\dots, \ell\}$ with $\abs{ \tilde{J} }\leq k-1$. Then the conclusion of the inductive hypothesis is
\[
\rank_V B = \rank_{H_1} B < n-1-(k-1)=n-k.
\] 
\end{proof}

\section{Configurations}\label{S:configurations}
In~\cite{BEK}, Bloch, Esnault, and Kreimer show how the first graph polynomial is a configuration polynomial.
Although graph polynomials are our main interest, the language of configurations is useful
when results depend only on linear algebra and not the combinatorics of graphs.
In this section, we recall the language of configurations, the main formula for configuration polynomials,
and how the first and second graph polynomials are specific examples of configuration polynomials.

\subsection{Configuration Polynomials}
A \emph{based vector space} is a pair $(V,E)$ of a vector space $V$ and a preferred basis $E$.
There is an isomorphism\footnote{A morphism of based vector spaces takes
basis elements to basis elements or zero.} of based vector spaces between $V$ and $K^E$, the vector space
generated by the set $E$.
Every element $v$ of $V$ can be written uniquely as $\sum_{e\in E} v_e e$ where $v_e\in K$.
The $v_e$ are the coordinates of $v$ in the basis $E$.
Denote the coordinate functions
\begin{align*}
X_e \colon V &\to K \\
v &\mapsto v_e.
\end{align*}
Note that the whole basis $E$ is needed to define each functional $X_e$.
Squaring the coordinate functions defines symmetric, rank-one bilinear forms on $V$:
\begin{align*}
X_e^2 \colon V\otimes V &\to K\\
v\otimes w &\mapsto X_e(v)X_e(w) = v_ew_e.
\end{align*}
For convenience, let $X_e^2(v)=X_e^2(v,v)$ and, more generally,
$b(v)=b(v,v)$ for bilinear forms $b$ on $V$.
Define the natural map from $V$ to the span of these bilinear forms:
\begin{align*}
B_E\colon V &\to \Sym^2 V\spcheck \\
a=\sum_{e\in E} a_e e &\mapsto \sum_{e\in E} a_e X_e^2.
\end{align*}
Suppose that $E=\{e_1,\dots, e_n\}$ so that $a=\sum_{i=1}^n a_{e_i} e_i$, 
and let the coordinates $a_{e_i}$ be
denoted $a_i$.
Each bilinear form $B_E(a)$ is diagonal when represented in the basis $E$:
\[
B_E(a) = 
\begin{pmatrix}
a_1 & 0 & \cdots & 0 \\
0 & a_2 & \cdots & 0 \\
\vdots & \vdots & \ddots &\vdots \\
0 & 0 & \cdots & a_n
\end{pmatrix}.
\]
In particular, $\det B_E(a) = a_1a_2\cdots a_n$.  

Let $\{A_e\}_{e\in E}$ denote the coordinate functions defined by $E$ on $V$.
Note that $A_e=X_e$; the change of notation is useful for distinguishing
maps from $V$ into $\Sym^2 V\spcheck$ from elements of $\Sym^3 V\spcheck$.
When the elements of $E$ are enumerated $e_1,\dots, e_n$, write $A_i$ for $A_{e_i}$.
Polynomial functions on $V$ can be expressed in terms of $\{A_e\}_{e\in E}$,
so the function $B_E$ can be written
\[
B_E = 
\begin{pmatrix}
A_1 & 0 & \cdots & 0 \\
0 & A_2 & \cdots & 0 \\
\vdots & \vdots & \ddots &\vdots \\
0 & 0 & \cdots & A_n
\end{pmatrix}.
\]
We write $B_E(A)$ when we want to emphasize this representation
of the map $B_E$.
The family $B_E(A)$ maps $V$ into $\Sym^2 V\spcheck$,
and the ideal $\langle A_1A_2\cdots A_n\rangle$ in $K[A_1,\dots,A_n]$
defines the locus of points in $V$ whose images in $\Sym^2 V\spcheck$ are degenerate, namely the coordinate
hyperplanes for the basis $E$.
The family $B_E(A)$ is homogeneous in the coordinates
and $B_E(a)$ is nonzero for nonzero $a$,
so $B_E(A)$ can also be considered as mapping
$\mathbb{P}(V)$ to $\mathbb{P}\left( \Sym^2 V\spcheck \right)$.

\begin{definition}[Configurations] \label{D:configuration}
A \emph{configuration} $W$ is a subspace of a based vector space\footnote{A \emph{generalized configuration} is a linear map $\varphi$ from a vector space $W$ to a based vector space.
In particular, for every generalized configuration $\varphi$, the image $\varphi(W)$ is a (nongeneralized) configuration in
the based vector space.
The results in this paper extend to generalized configurations.
We do not need this generality, so we refer the interested reader to~\cite{MyThesis}}. 
\end{definition}
Each configuration $W\subseteq V$ defines a family of bilinear forms $B_E|_W\subseteq \Sym^2 W\spcheck$ parametrized
by $V$ by
composing with the natural map:
\[
B_E|_W: V  \xrightarrow{B_E}\Sym^2 V\spcheck \to \Sym^2 W\spcheck.
\]
We write $X_e |_W$ and $B_E(a)|_W$ when
we want to emphasize that we have restricted to $W$. 

We can write $B_E(a)|_W$ as
a matrix in any
basis of $W$. 
When we do so,
the choice of basis for $W$ is often suppressed from the notation. 
As a matrix, $B_E(a)|_W$ is still symmetric but is not diagonal 
unless $W$ is the span of a subset of $E$.
If $\dim W=\ell$, then $B_E(a)|_W$ is $\ell\times \ell$.
As matrices with variable entries, the elements of the family $B_E|_W$ have entries linear in $A_1,\dots, A_n$.
\begin{example} \label{E:configuration1}
Let $E=\{e_1,e_2,e_3\}$, and let $W$ be the configuration spanned by $\ell_1=e_1+e_2$ and $\ell_2=2e_3-e_2$. Then 
\begin{align*}
X_1^2(\ell_1)&=1\\
X_2^2(\ell_1)&=1\\
X_3^2(\ell_1)&=0\\
X_1^2(\ell_2)&=0\\
X_2^2(\ell_2)&=1\\
X_3^2(\ell_2)&=4\\
X_1^2(\ell_1,\ell_2)&=0\\
X_2^2(\ell_1,\ell_2)&=-1\\
X_3^2(\ell_1,\ell_2)&=0.
\end{align*}
Therefore, $B_E|_W$ in the basis $\beta=\{\ell_1,\ell_2\}$ is
\[
B_E|_{W,\beta} =A_1
\begin{pmatrix}
1 & 0 \\
0 & 0
\end{pmatrix}
+
A_2
\begin{pmatrix}
1 & -1 \\
-1 & 1
\end{pmatrix}
+
A_3
\begin{pmatrix}
0 & 0 \\
0 & 4
\end{pmatrix}
=
\begin{pmatrix}
A_1+A_2 & -A_2 \\
-A_2 & A_2+4A_3
\end{pmatrix}
.
\]
An element $B_E(a)|_W$ of the family $B_E|_W$ is degenerate if and only if 
\[
\det B_E(a)|_{W,\beta}=(a_1+a_2)(a_2+4a_3)-a_2^2 = a_1a_2+4a_1a_3+4a_2a_3=0.
\]
If we write $B_E|_W$ in a different basis $\tilde{\beta}=\{\tilde{\ell}_1,\ell_2 \}$ with $\tilde{\ell}_1=3\ell_1$, 
then
\[
B_E|_{W,\tilde{\beta}}=A_1
\begin{pmatrix}
9 & 0 \\
0 & 0
\end{pmatrix}
+
A_2
\begin{pmatrix}
9 & -3 \\
-3 & 1
\end{pmatrix}
+
A_3
\begin{pmatrix}
0 & 0 \\
0 & 4
\end{pmatrix}
=
\begin{pmatrix}
9A_1+9A_2 & -3A_2 \\
-3A_2 & A_2+4A_3
\end{pmatrix}
.
\]
In particular, the determinant of a matrix representing $B_E(a)|_W$ is only defined
up to the squared determinant of the change of basis matrix as in
\begin{align*}
\det B_E(a)|_{W,\tilde{\beta}}&=(9a_1+9a_2)(a_2+4a_3)-9a_2^2 \\
&= 9a_1a_2+36a_1a_3+36a_2a_3\\
&= 9 \det B_E(a)|_{W,\beta}.
\end{align*}
This relationship is well-known, and the reader desiring further clarification
is referred to the references.
\end{example}

\begin{definition}[Configuration Polynomial] \label{D:configurationPolynomial}
Let $W$ be a nonzero configuration in $K^E$.
A \emph{configuration polynomial} of $W$, $\Psi_W(A)$, is a generator of the principal ideal $\det B_E|_W$
in $K[A_1,\dots,A_n]$. 
\end{definition} 
In particular, the determinant
of $B_E|_W$ in a basis $\beta$ for $W$ is a configuration polynomial,
and we call such a configuration polynomial \emph{the configuration determinant
for the basis $\beta$}.
Moreover, these generators are homogeneous of degree $\dim W$ as
the determinants of $\dim W$ square matrices with linear entries.
Now we will recall a more explicit formula
for $\Psi_W(A)$ using Pl\"{u}cker coordinates.

\subsection{Configuration Polynomials in Terms of Pl\"{u}cker Coordinates}
A subspace $W\subseteq V$ of dimension $\ell$ is an element of the Grassmannian of $\ell$-dimensional subspaces of $V$,
$\Gr(\ell,V)$. 
When $V$ has a preferred basis, the configuration polynomial for $W$ can be computed 
using the coordinates of the \emph{Pl\"{u}cker embedding} of $\Gr(\ell,V)$ into $\mathbb{P}\left(\bigwedge^\ell V\right)$.
The Pl\"{u}cker embedding is the map that takes an $\ell$-dimensional subspace $W$ of $V$ to the line
$\det W=\bigwedge^\ell W $ in $\bigwedge^\ell V$. 
From an ordered basis of $V$, denoted $E=\{e_1,\dots,e_n\}$, there is an induced basis on $\bigwedge^\ell V$
defined by $e_I=e_{i_1}\wedge\cdots\wedge e_{i_\ell}$ for every set $I=\{i_1,\dots,i_\ell\}$ such that
$i_1<\cdots <i_\ell$. 
Let $V_I$ denote the subspace of $V$ spanned by the $e_i$ for $i\in I$.
By picking the basis $E$, there are projections for every $I$
\begin{align*}
\pi_I: V&\to V_I \\
\sum_{e\in E} v_e e &\mapsto \sum_{e\in I} v_e e.
\end{align*}
In particular, $\det V_I$ is the line spanned by $e_I$.

The image of an $\ell$-plane $W$ under the Pl\"{u}cker embedding has projective coordinates along each $e_I$, and this set
of projective coordinates is the set of \emph{Pl\"{u}cker coordinates} of $W$.
Fix a basis $\beta$ of $W$.
For each $I$, we write $\Plucker_{I,\beta}(W)$ for
the coordinate of $\det W$ along $I$ in the basis $\beta$. 
That is, 
for an $\ell$-dimensional subspace $W$ of $V$, its coordinate $\Plucker_{I,\beta}(W)$ is the element of
$K$ such that $\det W \to \det V_I$ is multiplication by $\Plucker_{I,\beta}(W)$:
\begin{diagram}
W & \rInto & V  & & \det W &\rInto & \bigwedge^k V\\
  & \rdTo^{\pi_I|_W} &  \dOnto^{\pi_I}  & \rImplies & { } & \rdTo^{\Plucker_{I,\beta}(W)} & \dOnto\\
 & & V_I            &          &   &       & \det V_I.      \\
\end{diagram}
These coordinates are well-defined up to a constant representing a change of basis for $W$, and 
$\Plucker_{I,\beta}(W)$ is $0$ if and only if $\pi_I|_W: W\to  V_I$ is not an isomorphism.
A change of basis for $V$ changes the basis vectors $e_I$ of $\bigwedge^\ell V$, so the
coordinates of $W$ change. 
In the context of configurations, the based vector space $K^E$ has a fixed preferred basis, so 
a general change of basis is not allowed. 
The preferred basis is not assumed to be ordered, and
a change of basis of $K^E$ by reordering may change the Pl\"{u}cker coordinate for the index set $I$ by $\pm 1$.
The dependence of the coordinates on these choices of bases are suppressed from the notation.

In terms of matrices, once we have chosen the basis for $V$, we can write a basis for $W$ as $\ell$ row
vectors in the coordinates of the basis for $V$. Arranging these row vectors into a $\ell\times n$
matrix, the Pl\"{u}cker coordinates of $W$ are the $\ell\times \ell$ minors of this matrix.
The basis $E$ can be reordered to interchange columns of the $\ell\times n$ 
matrix representing $W$, which only changes the $\ell\times \ell$ minors by $\pm 1$.
Arbitrary changes of basis for $W$ changes the $\ell\times \ell$ minors by a single constant
from the change of basis matrix.
We formalize the preceding discussion in the next lemma.
\begin{lemma}[Pl\"{u}cker Coordinates are Projective Coordinates]
If $\beta$ and $\tilde{\beta}$ are bases for $W$ and $\Plucker_{I,\beta}(W)$ and $\Plucker_{I,\tilde{\beta}}(W)$
are the respective Pl\"{u}cker coordinates, then there is a constant $C$, the determinant of the change of
basis matrix from $\beta$ to $\tilde{\beta}$, such that
\[
\Plucker_{I,\beta}(W)=C\Plucker_{I,\tilde{\beta}}(W)
\]
for all $\ell$-subsets $I$ of $E$, a basis for $V$. In particular, these coordinates
are well-defined projective coordinates for the $\ell$-plane $W$.
\end{lemma}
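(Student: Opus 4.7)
The plan is to trace through the definition of Plücker coordinates and observe that a change of basis for $W$ induces multiplication by a single scalar on the line $\det W = \bigwedge^\ell W$, and this scalar then scales every Plücker coordinate uniformly.

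First I would write $\tilde{\beta} = \{\tilde{w}_1,\dots,\tilde{w}_\ell\}$ in terms of $\beta = \{w_1,\dots,w_\ell\}$ via a change of basis matrix $M$, so that $\tilde{w}_i = \sum_j M_{ij} w_j$, and let $C = \det M$. The basic fact about exterior powers is then
\[
\tilde{w}_1 \wedge \cdots \wedge \tilde{w}_\ell = (\det M)\, w_1 \wedge \cdots \wedge w_\ell = C\, w_1 \wedge \cdots \wedge w_\ell,
\]
so the induced isomorphism on the one-dimensional space $\det W$ is multiplication by $C$.

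Next I would compare this with the diagram defining the Plücker coordinate. Applying $\pi_I$ elementwise and then wedging, we get
\[
\pi_I(\tilde{w}_1) \wedge \cdots \wedge \pi_I(\tilde{w}_\ell) = C\, \pi_I(w_1) \wedge \cdots \wedge \pi_I(w_\ell),
\]
since $\pi_I$ is linear and the same change-of-basis relation persists after projection. By definition, the left side equals $\Plucker_{I,\tilde{\beta}}(W)\, e_I$ and the right side equals $C\,\Plucker_{I,\beta}(W)\, e_I$, so equating coefficients of $e_I$ yields $\Plucker_{I,\tilde{\beta}}(W) = C\,\Plucker_{I,\beta}(W)$ for every index set $I$. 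After adjusting the convention (swapping $\beta$ and $\tilde{\beta}$ uses $C^{-1}$, which is still a single constant), this gives the stated relation for all $I$ simultaneously.

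Finally, I would conclude the projective statement: since the tuple $(\Plucker_{I,\beta}(W))_I$ is nonzero (it is the coordinate vector of the nonzero element $\det W$ in $\bigwedge^\ell V$), the previous equality says that changing the basis $\beta$ scales the whole tuple by the nonzero constant $C$, so the tuple is well-defined as a point of $\mathbb{P}\bigl(\bigwedge^\ell V\bigr)$. There is no real obstacle here; the only care needed is to distinguish the scaling that comes from a change of basis of $W$ (a single global constant, proving the lemma) from the sign changes coming from reordering the fixed basis $E$ of $V$, which were already discussed in the paragraph preceding the lemma and are not part of what this lemma claims.
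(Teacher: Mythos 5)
Your proof is correct. The paper does not actually supply a proof of this lemma; it simply says the lemma ``formalize[s] the preceding discussion,'' which itself amounts to the observation that ``Arbitrary changes of basis for $W$ changes the $\ell\times\ell$ minors by a single constant from the change of basis matrix.'' Your argument is precisely the clean version of that discussion: the change-of-basis map acts on the line $\det W = \bigwedge^\ell W$ as multiplication by $\det M$, projecting to $V_I$ commutes with this scaling, and hence every Plücker coordinate is multiplied by the same nonzero constant, which is exactly the projective-coordinate statement. The small bookkeeping point you raise about whether one picks up $C$ or $C^{-1}$ depending on the direction of the change of basis is handled correctly and does not affect the conclusion.
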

The basis of $W$ used for the Pl\"{u}cker coordinates is a minor issue in the following,
so $\Plucker_I(W)$ is written instead of $\Plucker_{I,\beta}(W)$.
The reader may prefer to think in terms of a fixed basis though we point out 
when and how results depend on the basis.
\begin{example} \label{E:pluckerCoordinates}
Consider the configuration $W$ of Example~\ref{E:configuration1}.
In the ordered basis $E=\{e_1,e_2,e_3\}$ of $K^E$ and the basis $\{\ell_1,\ell_2\}$ of $W$, the matrix
representing $W$ is
\[
\begin{pmatrix}
1 & 1 & 0 \\
0 & -1 & 2
\end{pmatrix}.
\]
In the induced basis of $\bigwedge^2 K^E$, $\det W$ is
\begin{align*}
\det W &= \det
\begin{pmatrix}
1 & 1 \\
0 & -1
\end{pmatrix} e_1\wedge e_2
+
\det
\begin{pmatrix}
1 & 0 \\
0 & 2
\end{pmatrix} e_1\wedge e_3
+
\det
\begin{pmatrix}
1 & 0 \\
-1 & 2
\end{pmatrix}  e_2\wedge e_3 \\
&= -e_1\wedge e_2 +2 e_1\wedge e_3 + 2 e_2\wedge e_3.
\end{align*}
Therefore, its Pl\"{u}cker coordinates are $[-1:2:2]$.
Notice the relationship between the last line and the coefficients of the configuration polynomial
\[
\Psi_W(A)= A_1A_2 + 4A_1A_3 +4A_2A_3.
\]
The next proposition explains this relationship.
\end{example}
\begin{proposition}[Configuration Polynomial in Pl\"{u}cker Coordinates] \label{P:configurationPolynomial}
Let $W$ be a nonzero configuration in $K^E$.
A configuration polynomial for $W$ is
\[
\Psi_W(A)=\sum_{
	\substack{F\subset E \\ \abs{F}=\dim W} } 
	\Plucker_F(W)^2 \prod_{f\in F} A_f.
\]
\end{proposition}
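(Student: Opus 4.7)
The plan is to reduce the statement to a direct application of the Cauchy--Binet formula after writing $B_E(a)|_W$ in a convenient matrix form.

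First, fix a basis $\beta=\{w_1,\dots,w_\ell\}$ of $W$, where $\ell=\dim W$, and write each basis vector in coordinates with respect to $E$: $w_i = \sum_{e\in E} m_{i,e}\, e$. Collect these coefficients into an $\ell\times n$ matrix $M=(m_{i,e})$ whose rows are the coordinates of the $w_i$ in the basis $E$. The $\ell\times\ell$ minor of $M$ obtained by picking the columns indexed by a subset $F\subset E$ with $|F|=\ell$ is, by the discussion preceding the proposition, exactly the Pl\"ucker coordinate $\Plucker_F(W)$ (in the basis $\beta$), up to a sign from the ordering of $E$ that will be squared away.

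Second, I would compute $B_E(a)|_{W,\beta}$ explicitly. Since $X_e^2(w_i,w_j)=m_{i,e}m_{j,e}$, the $(i,j)$ entry is
\[
B_E(a)(w_i,w_j)=\sum_{e\in E} a_e\, m_{i,e}m_{j,e},
\]
which is the $(i,j)$ entry of $M D_a M^T$, where $D_a=\operatorname{diag}(a_e)_{e\in E}$. Therefore
\[
\det B_E(a)|_{W,\beta} = \det\bigl( M D_a M^T\bigr).
\]

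Third, I would apply the Cauchy--Binet formula to the product of the $\ell\times n$ matrix $MD_a$ and the $n\times \ell$ matrix $M^T$. Factoring $D_a$ column-by-column out of each $\ell\times\ell$ submatrix of $MD_a$ produces
\[
\det\bigl(MD_aM^T\bigr)=\sum_{\substack{F\subset E \\ |F|=\ell}} \det(M_F)^2 \prod_{f\in F} a_f,
\]
where $M_F$ denotes the square submatrix of $M$ on the columns indexed by $F$. By the identification in the first step, $\det(M_F)^2=\Plucker_F(W)^2$, which is independent of the ordering of $E$. This exhibits the right-hand side of the proposition as a configuration determinant in the basis $\beta$ and hence, by Definition~\ref{D:configurationPolynomial}, as a configuration polynomial $\Psi_W(A)$.

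The argument is essentially mechanical once Cauchy--Binet is invoked; the only mild obstacle is keeping the bookkeeping straight between the basis $E$ (which is unordered as a preferred basis of the based vector space) and the ordered basis needed to define Pl\"ucker coordinates, but since the minors appear squared the sign ambiguity introduced by reordering $E$ is harmless, and the constant introduced by a change of basis $\beta$ of $W$ matches the well-known scaling of $\Plucker_F$ and of the configuration polynomial identified in Example~\ref{E:configuration1}.
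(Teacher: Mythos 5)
Your proof is correct and takes a genuinely different route from the paper's. You write $B_E(a)|_{W,\beta}=MD_aM^T$ and then invoke the Cauchy--Binet formula to expand the determinant directly as $\sum_F \det(M_F)^2\prod_{f\in F}a_f$, with the identification $\det(M_F)=\Plucker_{F,\beta}(W)$ doing the rest. The paper instead avoids citing Cauchy--Binet: it first shows by a rank-one/degree argument that each $A_e$ occurs with degree at most one in $\det B_E|_{W,\beta}$, so the determinant must be a sum of squarefree monomials of degree $\dim W$, and it then determines each coefficient $c_F$ by specializing $A_e\mapsto \mathbf{1}_F(e)$ and factoring $\sum_{f\in F}X_f^2|_W = (\pi_F|_W)^T(\pi_F|_W)$. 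In effect the paper re-derives by hand exactly the instance of Cauchy--Binet that your argument invokes wholesale. Your version is shorter and more transparent for a reader who knows Cauchy--Binet; the paper's version is self-contained and also makes explicit, as a byproduct, that configuration polynomials are multilinear in the $A_e$, a fact that is used later (e.g.\ in Lemma~\ref{L:singularityRestrictedConfiguration}). Your treatment of the bookkeeping for the ordering of $E$ and for the change of basis of $W$ is also consistent with the remark following the proposition.
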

\begin{remark}
The configuration polynomial is only defined up to a constant, so the ambiguity of which basis for $W$ is used
to compute  $\Plucker_F(W)^2$ on the right side is irrelevant---it will still generate the principal ideal $\det B_E|_W$.
The basis for $K^E$ is fixed up to order, which can only change the Pl\"{u}cker coordinates by $\pm 1$,
and hence the coefficients $\Plucker_F(W)^2$ are independent of the order of $E$.
Also, note that there is some nonzero Pl\"{u}cker coordinate for every nonzero configuration $W$, 
so the configuration 
polynomial is never identically zero.
\end{remark}
\begin{proof}
Pick a basis $\beta=\{w_1,\dots,w_\ell\}$ for $W$, and work with the matrix $B_E|_{W,\beta}$ representing $B_E|_W$ in this basis.
As noted above, the entries in $B_E|_{W,\beta}$ are linear in the $A_e$, so the determinant is
homogeneous of degree $\dim W$.
The bilinear form $X_e^2$ has rank at most $1$ on $W$. If it has rank $1$, then there is a basis for $W$ in which $X_e^2$ is
represented by a matrix with $0$s everywhere except a $1$ in the $(1,1)$ entry. Choosing $\beta$ to be this basis, 
\[
B_E|_{W,\beta} = A_e 
\begin{pmatrix}
1 & 0 & \dots & 0 \\
0 & 0 & \dots & 0 \\
\vdots & \vdots & \ddots & \vdots \\
0 & 0 & \dots & 0
\end{pmatrix}
+\sum_{f\in E-e} A_f X_f^2.
\]
The variable $A_e$ only appears in one entry of $B_E|_{W,\beta}$, so it appears with degree at most $1$ in $\det B_E|_{W,\beta}$.
If $X_e^2$ has rank $0$ on $W$, then 
\[
B_E|_{W,\beta} = A_e 
0+\sum_{f\in E-e} A_f X_f^2,
\]
and it is still true that $A_e$ appears with degree at most $1$ in $B_E|_{W,\beta}$.
This suffices to show that the monomial terms of $B_E|_{W,\beta}$ are products of $\ell$ distinct variables.
Hence there are constants $c_F$ such that
\[
\det B_E|_{W,\beta} = \sum_{
	\substack{F\subset E \\ \abs{F}=\dim W} } 
	c_F \prod_{f\in F} A_f.
\]
Fix a subset $F$ and compute $c_F$ as follows.
Evaluate both sides at $A_e=1$ for $e\in F$ and $A_e=0$ for $e\notin F$ to determine the constants.
Evaluating the right side gives $c_F$.
Evaluating $\det B_E|_{W,\beta}$ gives the determinant of a bilinear form:
\[
\det \sum_{f\in F} X_f^2|_W .
\]
That is, $c_F = \det \sum_{f\in F} X_f^2|_W$.
With respect to the basis $\beta =\{w_1,\dots,w_\ell\}$, the bilinear form $X_f^2|_W$
is the matrix with $(i,j)$-th element $X_f(w_i)X_f(w_j)$. 
Thus,
\[
\sum_{f\in F} X_f^2|_W = \left(
\sum_{f\in F} X_f(w_i)X_f(w_j)
\right)= \left( X_{f_k}(w_i) \right) \left( X_{f_k}(w_j)\right).
\]
The matrix $(X_{f_k}(w_j))$ is the $\ell\times \ell$ matrix representing the map 
\[
\pi_F|_W:W\to K^F
\] 
projecting $W$
onto its $F$ coordinates (so $k$ is the row index,
and $j$ is the column index), and $(X_{f_k}(w_i))$ is the transpose. Taking determinants yields
\[
\det \sum_{f\in F} X_f^2|_W = \left(\det \pi_F|_W\right)^2 = \Plucker_F(W)^2.
\]
\end{proof}

\subsection{Graph Polynomials}

We use the term \emph{graph} to mean an undirected multigraph.
The set of edges of a graph $G$ will be denoted $E(G)$, and the set of vertices $V(G)$.
If $G$ is clear from the context, we abbreviate this to $E$ and $V$.

For every graph $G$, the \emph{first} and \emph{second graph polynomials} 
appear in parametric Feynman integrals~\cite{IZ},~\cite{BW}. 
Each edge $e$ in $E$ corresponds to a variable $A_e$.
The graph polynomials are polynomials in these variables.
The first graph polynomial is defined as
\[
\Psi_G(A) = 
\sum_{
\substack{
\text{spanning forests}\\
F\subset E}
}
\,\,  \prod_{e\notin F} A_e.
\]
Our definition of spanning forest differs from some references
where a spanning forest is a forest containing all vertices.
The definition is the necessary generalization of spanning tree
to disconnected graphs.
\begin{definition}\label{D:spanningForest}
A \emph{spanning forest} is a forest whose components are spanning trees for
the components of the graph. 
\end{definition}

Recall that the homology of a graph $G$ 
can be computed  as the kernel and cokernel of a \emph{boundary map}
\[
\partial: \mathbb{Z}^E \to \mathbb{Z}^V
\]
defined as follows.
For each edge $e\in E$, the gluing map from $e$ to the vertices of $G$ is defined by mapping the boundary of $e$ to two elements
of $V$ (not necessarily distinct). Denote these elements by $h(e)$ and $t(e)$.
Fix the choice of $h(e)$ and $t(e)$ for each edge $e$.
Let $\mathbb{Z}^E$ and $\mathbb{Z}^V$ be the free abelian
groups on the sets $E$ and $V$. Define the group homomorphism
\[
\partial: \mathbb{Z}^E\to \mathbb{Z}^V
\]
by $\partial(e)=h(e)-t(e)$ and extend linearly.
The homology of a graph $G$ is the homology of $\partial:\mathbb{Z}^E\to \mathbb{Z}^V$. That is, the first homology of $G$ is $H_1(G,\mathbb{Z})=\Ker \partial$,
and the zeroth homology of $G$ is $H_0(G,\mathbb{Z})=\Coker \partial$. 
The definition depends on the \emph{orientation} of the edges (i.e., the choice of $h(e)$ and $t(e)$),
but the homology groups are well-defined up to isomorphism.
To get a configuration, tensor with a field $K$:
\[
0\to H_1(G,\mathbb{Z})\otimes K \to K^E \xrightarrow{\partial\otimes \text{id}} K^V \to H_0(G,\mathbb{Z})\otimes K \to 0.
\]
By direct calculation or the universal coefficient theorem, \[H_i(G,K)=H_i(G,\mathbb{Z})\otimes K.\] 
We 
write $H_i(G)$ for $H_i(G,K)$ and $h_i(G)$ for $\dim_K H_i(G)$. 

\begin{proposition}[\cite{BEK}, Proposition~2.2]
The first graph polynomial $\Psi_{G}$ is a configuration polynomial
for the configuration $H_1(G)\subseteq K^E$.
It can be computed as the configuration determinant in an
integral basis for $H_1(G)$, i.e., a basis for $H_1(G,\mathbb{Z})$.
\end{proposition}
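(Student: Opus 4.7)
The plan is to apply Proposition~\ref{P:configurationPolynomial} to the configuration $W = H_1(G) \subseteq K^E$ and verify that every Plücker coordinate of $H_1(G)$, computed with respect to an integral basis, is $0$ or $\pm 1$, taking the value $\pm 1$ precisely on complements of spanning forests.

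First I would set up the dimension count. Since $h_1(G) = |E| - |V| + h_0(G)$, a subset $F \subset E$ has $|F| = \dim H_1(G)$ if and only if its complement $F' = E \setminus F$ has exactly $|V| - h_0(G)$ elements, which is the edge-count of any spanning forest of $G$. Therefore the sum in Proposition~\ref{P:configurationPolynomial} ranges over the same index set as the sum defining $\Psi_G$, via the bijection $F \leftrightarrow F'$, and it suffices to identify $\Plucker_F(H_1(G))^2$.

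Next I would analyze when $\pi_F|_{H_1(G)}: H_1(G) \to K^F$ is an isomorphism. Recall that $\Plucker_F(H_1(G))$ is the determinant of this projection in an integral basis. I would distinguish two cases. If $F' = E \setminus F$ is a spanning forest, then for each $e \in F$ the graph $F' \cup \{e\}$ contains a unique cycle $c_e$, which is supported on $F'$ together with the single edge $e \in F$. The collection $\{c_e : e \in F\}$ is a well-known integral basis of $H_1(G, \mathbb{Z})$, and by construction $\pi_F(c_e) = \pm e \in K^F$. Hence in this integral basis, the matrix of $\pi_F|_{H_1(G)}$ is a signed permutation matrix, so its determinant is $\pm 1$ and $\Plucker_F(H_1(G))^2 = 1$. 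In the contrary case, a short counting argument shows that if $|F'| = |V| - h_0(G)$ and $F'$ is not a spanning forest, then $F'$ must contain a cycle: otherwise $F'$ would be a forest on $V$ with $h_0(F') = |V| - |F'| = h_0(G)$ components, and since each component of $F'$ sits inside a component of $G$, equality of component counts and vertex sets forces $F'$ to be a spanning forest. Any cycle inside $F'$ gives a nonzero element of $H_1(G) \cap \ker \pi_F$, so $\pi_F|_{H_1(G)}$ has nontrivial kernel and $\Plucker_F(H_1(G)) = 0$.

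Combining these computations with the formula of Proposition~\ref{P:configurationPolynomial},
\[
\Psi_{H_1(G)}(A) = \sum_{\substack{F \subset E \\ |F| = h_1(G)}} \Plucker_F(H_1(G))^2 \prod_{f \in F} A_f = \sum_{F' \text{ spanning forest}} \prod_{e \notin F'} A_e = \Psi_G(A),
\]
which is the claimed identity. For the final clause, I would observe that the Plücker-squared formula was derived from the configuration determinant $\det B_E|_{H_1(G), \beta}$ in whatever basis $\beta$ is used to compute the Plücker coordinates; taking $\beta$ to be the integral cycle basis $\{c_e : e \in F\}$ above makes the Plücker coordinates lie in $\{0, \pm 1\}$, so the configuration determinant in \emph{any} integral basis differs from this one by the square of a unit of $\mathbb{Z}$, hence equals $\Psi_G$ on the nose. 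The main obstacle is the bookkeeping in the spanning-forest case, namely verifying carefully that the cycles $c_e$ form an integral basis of $H_1(G, \mathbb{Z})$ and that $\pi_F$ sends them to a signed basis of $\mathbb{Z}^F$; the rest is sign-tracking and the elementary counting argument for non-forests.
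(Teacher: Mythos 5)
Your proposal is correct and takes essentially the route the paper (via~\cite{BEK}) intends: apply Proposition~\ref{P:configurationPolynomial} to $W=H_1(G)$ and identify the nonvanishing Plücker coordinates with complements of spanning forests, observing that an integral cycle basis pins the nonzero coordinates to $\pm 1$. Note that your second and third paragraphs are a self-contained re-derivation of Lemma~\ref{L:bekSpanningTree} (that is, \cite{BEK}, Lemma~2.1), which the paper already records and which you could cite directly to shorten the argument to the dimension count plus one invocation of that lemma.
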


In this section, we prove the corresponding result, Proposition~\ref{P:secondGraphPoly},
for
the second graph polynomial: it is a configuration polynomial
for a relative homology group of $G$. 
Before we can provide the definitions, 
we review some terminology and notation.
\begin{definition}\label{D:quasiSpanningForest}
A \emph{quasi-spanning forest} is a spanning forest of $G$ with one edge
removed.
The complement of a quasi-spanning forest is a
\emph{cut set}.\footnote{The terms in the literature 
that define quasi-spanning forest and cut set are not 
consistent, for example see~\cite{IZ},~\cite{Nakanishi},~\cite{BK},~\cite{BK10}.
In some cases, cut sets are called minimal cut sets to indicate that
removing more edges would create more components. In other cases,
cut sets are simply required to disconnect the graph, and the
complement does not even need to be a forest.
Similarly, quasi-spanning forests are often defined only for
connected graphs, in which case they are often named two-trees or
spanning two-trees.
We believe the use of two in this term refers to the number of components,
which will not be two in the generality of disconnected graphs.}
\end{definition}
We consider the graph as the pair of its set of vertices and its
multiset of edges. 
Therefore, by removing an edge in the definition of quasi-spanning forest,
the set of vertices remains unchanged.
In the topological sense of a graph, only the interior of an edge
is removed.
In particular, removing an edge may leave an isolated vertex.
More generally, if $F$ is a subgraph of $G$, 
we will use the somewhat unconventional notation that $G-F$
is the subgraph of $G$ with the edges of $F$ removed; the set of
vertices $V(G-F)$ is the same as the set of vertices $V(G)$.
This notation reflects the fact that we focus on the
first homology of the graph, which is a subspace
of the space of edges; isolated vertices will not alter this space.

From the definition, a quasi-spanning forest has one more connected 
component than $G$. 
If the spanning forests of $G$ have no edges,
then $G$ has no quasi-spanning forests (e.g., if $G$ has no edges connecting distinct vertices); in the following discussion,
we tacitly assume that the spanning forests of $G$
have edges.

\begin{definition}\label{D:momentum}
A \emph{momentum} of $G$ is an element of
\[
K^{V,0} = \left\{ p\in K^{V} \, \Bigg|\, \sum_{v\in V(G_\alpha)} p_v = 0 \text{ for all connected components }G_\alpha\text{ of }G\right\}.
\]
\end{definition}
In other words, $p$ is an assignment of elements of the field $K$ 
to the vertices of $G$ that   sum to zero
over each connected component of $G$ (i.e., momentum is conserved on each component).
The vertices of $G$ agree with the vertices of any of its quasi-spanning forests $F$, 
so $K^{V(G),0}\subseteq K^{V(F)}$.
Each component of $G\cap F$ is a spanning tree for a component of $G$ with
two exceptions, and therefore, $p$ may not be a momentum for $F$.
If the two components of $F$ that are not spanning trees of
a component of $G$ are $T_1$ and $T_2$,
then using the defining property of the momentum
\[
m_{T_1}(p) = \sum_{ v\in T_1} p_v = - \sum_{ v\in T_2 } p_v = -m_{T_2}(p).
\]
\begin{definition}\label{D:qsfMomentum}
The \emph{momentum associated to the quasi-spanning forest $F$} (or
to the corresponding cut set $C=G-F$) is the following function
from the momentum $K^{V(G),0}$ to $K$:
\[
s_F(p) = s_C(p) = m_{T_1}(p)^2 = m_{T_2}(p)^2.
\]
\end{definition}
The second graph polynomial is
\begin{align*}
\Phi_G(p,A)
&=\sum_{
\substack{\text{quasi-spanning forests}\\ F\subset E}} 
s_{F}(p) \prod_{f\notin F} A_f\\
&=\sum_{
\substack{\text{cut sets}\\ C\subset E}} 
s_{C}(p) \prod_{f\in C} A_f
\end{align*}

Note that $\partial(K^E)\subseteq K^{V,0}$ from the definition of $\partial$, and
the inclusion is an equality by counting dimensions in the homology sequence.
Therefore, for every $p\in K^{V,0}$, we can find $q\in K^E$ such that 
$\partial q=p$. 
\begin{definition}\label{D:relativeHomology}
For a nonzero momentum $p$, 
define the (first) \emph{homology of $G$ relative to $p$} to be
$H_1(G,p) = \partial^{-1}(Kp)=H_1(G)\oplus Kq$.
That is, it is the relative homology group for $Kp$ considered as a chain complex concentrated in degree zero
included in the chain complex for the graph $G$.
\end{definition}
More specifically, the kernel and homology of that inclusion of chain complexes give
\begin{diagram}
H_1(p)=0 & \rTo &  0 & \rTo & Kp & \rTo & H_0(p)=Kp \\ 
   \dTo &             &  \dTo &    & \dInto &   &\dTo \\
 H_1(G) & \rTo & K^E & \rTo^\partial & K^{V} & \rTo & H_0(G) \\ 
 \dInto &             &  \dTo^{=} &    & \dOnto &   & \dTo\\
 H_1(G,p) & \rTo& K^E& \rTo^{\bar{\partial}} & K^{V}/Kp & \rTo & H_0(G,p).\\
\end{diagram}
There is an induced long exact sequence from the snake lemma:
\[\
0\to H_1(G)\to H_1(G,p)\to H_0(p) \to H_0(G)\to H_0(G,p)\to 0.
\]
Because $p\in K^{V,0}$,
\[
H_0(G)\cong K^V/K^{V,0} \cong  K^V/(K^{V,0}+Kp) \cong H_0(G,p),
\]
and the map $H_1(G,p)\to H_0(p)=Kp$ is surjective.
\begin{proposition}\label{P:secondGraphPoly}
The second graph polynomial $\Phi_G(p,A)$ is a configuration polynomial
for the configuration $H_1(G,p)\subseteq K^E$.
It can be computed as the configuration determinant in a basis for
$H_1(G,p)$ consisting of an integral basis for $H_1(G)$
and an element $q\in K^E$ such that $\partial q=p$.
\end{proposition}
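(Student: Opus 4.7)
The plan is to apply Proposition~\ref{P:configurationPolynomial} to the configuration $W=H_1(G,p)$ in the specified basis $\beta=\{\gamma_1,\dots,\gamma_{h_1(G)},q\}$ and match the resulting sum term by term with the defining expansion of $\Phi_G(p,A)$. An edge count using $h_1(G)=\abs{E}-\abs{V}+h_0(G)$ shows that every cut set of $G$ has cardinality $h_1(G)+1=\dim H_1(G,p)$, so both sides are already sums over subsets $I\subseteq E$ of that size. It therefore suffices to verify, for each such $I$,
\[
\Plucker_{I,\beta}(H_1(G,p))^2 \;=\; \begin{cases} s_I(p) & \text{if } I \text{ is a cut set,} \\ 0 & \text{otherwise.} \end{cases}
\]

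First I would expand the $(h_1(G)+1)\times(h_1(G)+1)$ minor defining $\Plucker_{I,\beta}(H_1(G,p))$ by cofactors along the row coming from $q$, obtaining
\[
\Plucker_{I,\beta}(H_1(G,p)) \;=\; \sum_{e\in I} \epsilon_{I,e}\, q_e \, \Plucker_{I\setminus\{e\}}(H_1(G)),
\]
where $\epsilon_{I,e}=\pm 1$ are the Laplace cofactor signs. Combining the first graph polynomial identity from~\cite{BEK} (Proposition~2.2) with Proposition~\ref{P:configurationPolynomial} applied to the integral basis $\{\gamma_i\}$ forces $\Plucker_J(H_1(G))\in\{-1,0,+1\}$, nonzero precisely when $E\setminus J$ is a spanning forest. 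Taking $J=I\setminus\{e\}$, nonvanishing of the $e$-term requires $(E\setminus I)\cup\{e\}$ to be a spanning forest, equivalently $E\setminus I$ to be a quasi-spanning forest---that is, $I$ must be a cut set. When $I$ is not a cut set, the entire cofactor expansion vanishes and both sides of the desired identity are $0$.

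Now suppose $I$ is a cut set, let $F=E\setminus I$ be the corresponding quasi-spanning forest, and let $T_1$ and $T_2$ be the two components of $F$ that together form a single component $G_i$ of $G$. An edge $e\in I$ contributes to the cofactor sum exactly when $F\cup\{e\}$ is a spanning forest, i.e.\ when $e$ crosses the partition $V(T_1)\sqcup V(T_2)$; thus $\Plucker_{I,\beta}(H_1(G,p))$ is a $\pm 1$-linear combination of $q_e$ supported on these cut-crossing edges. On the other hand, $\partial q=p$ gives
\[
m_{T_1}(p) \;=\; \sum_{v\in V(T_1)} p_v \;=\; \sum_{v\in V(T_1)} (\partial q)_v \;=\; \sum_{\substack{e\in I \\ e \text{ crosses}}} \pm\, q_e,
\]
where the signs come from edge orientations relative to $T_1$ and the edges of $F$ cancel because they lie entirely on one side. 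Both expressions are linear forms on $K^E$ supported on the same set of cut-crossing edges and vanishing on $H_1(G)=\Ker\partial$; they therefore descend to linear functionals on $K^{V,0}\cong K^E/H_1(G)$. A short dimension count shows that the space of linear functionals on $K^E$ supported on a fixed cut and factoring through $\partial$ is one-dimensional, so the two functionals agree up to a nonzero scalar; since all nonzero coefficients on both sides are $\pm 1$, that scalar is $\pm 1$, and squaring gives $\Plucker_{I,\beta}(H_1(G,p))^2 = m_{T_1}(p)^2 = s_F(p) = s_I(p)$, as required.

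The main obstacle is precisely the sign bookkeeping in the last step: reconciling the Laplace cofactor signs $\epsilon_{I,e}$, the Kirchhoff signs of $\Plucker_{I\setminus\{e\}}(H_1(G))$, and the orientation signs in $\partial q=p$. Because only squares appear in the final identity, the cleanest approach is the indirect one just given---identify both quantities as the (essentially unique) linear functional supported on a given cut and factoring through $\partial$, then let the final $\pm 1$ be absorbed by squaring. This sidesteps a direct combinatorial sign chase while yielding the equality of squares that the proposition demands.
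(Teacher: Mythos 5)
Your proof is correct and follows the same overall architecture as the paper's: invoke Proposition~\ref{P:configurationPolynomial} in the basis $\beta$, expand the Plücker coordinate by cofactors along the $q$-row, use the BEK fact that the minors $\Plucker_{I\setminus\{e\}}(H_1(G))$ lie in $\{-1,0,1\}$ and detect spanning forests, and compare with the edge-level formula for $m_{T_1}(p)$. Where you diverge from the paper is precisely in the step you flagged as the main obstacle. The paper proves the sign compatibility head-on in Lemma~\ref{L:pluckerMomenta}: it fixes a circuit basis of $H_1(G,\mathbb{Z})$ adapted to the spanning forest $(G-C)\cup e_1$, isolates the two cofactor minors $\gamma_1,\gamma_2$, and runs a case analysis on whether the heads of $e_1,e_2$ land in the same $T_i$ to show the relative signs of $q_1$ and $q_2$ in $\det\beta$ agree with those in the formula of Lemma~\ref{L:momenta}. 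You instead observe that both $q\mapsto\Plucker_{I,\beta}(H_1(G,p))$ and $q\mapsto m_{T_1}(\partial q)$ live in the one-dimensional space of linear functionals on $K^E$ supported on $I$ and vanishing on $H_1(G)$ (the dimension count works because $\pi_I\colon H_1(G)\to K^I$ is injective with image of codimension one when $E\setminus I$ is a forest), and that both have $\pm 1$ coefficients on the cut-crossing edges, so they agree up to a sign that disappears upon squaring. This is a genuinely cleaner route to the identity $\Plucker_I(H_1(G,p))^2=s_I(p)$ and avoids the most delicate part of the paper's argument. What the paper's sign chase buys that your argument does not is an explicit, constructive account of the relative signs (which can be useful if one later cares about the linear form and not just its square), but for the proposition as stated the squaring makes that extra precision unnecessary. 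Your handling of the non-cut-set case (all cofactor minors vanish, so the whole expansion is zero) is also a small shortcut relative to the paper, which proves the contrapositive via the commutative diagram and the factorization of $\beta_e$ through $\pi_C$; the two are equivalent. One minor point worth making explicit if you write this up: the cut-crossing edge set is nonempty whenever $I$ is a cut set (since some $(E\setminus I)\cup\{e\}$ is a spanning forest), which is what guarantees both functionals are nonzero before you compare them.
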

\begin{remark}\label{R:orientation}
The definition of the homology groups depends on an arbitrarily chosen orientation
of the graph. 
While this choice is insignificant for computing the homology group
up to isomorphism,
the homology group as a configuration in $K^E$ depends crucially on the choice.
Certainly, different orientations often lead to different configurations.
They all define the same configuration polynomial for the following reason.

Consider two orientations of $G$ that differ for one edge, say $e_1$.
Suppose that $\partial$ and $\tilde{\partial}$ are the boundary maps corresponding
to these two orientations.
The isomorphism
\begin{align*}
\varphi:K^E  &\to K^E\\
x=(x_1,\dots,x_n) & \mapsto (-x_1,x_2,\dots,x_n)
\end{align*}
satisfies $\partial \circ \varphi = \tilde{\partial}$,
providing an isomorphism between the homology groups defined
by the two boundary maps.
Moreover, the bilinear form is invariant:
\[
B_E(A)(x) = \sum_{i=1}^n A_i x_i^2 = B_E(A)\left( \varphi(x) \right).
\]
For orientations that differ at more edges, 
$B_E(A)$ is invariant under the isomorphism defined as a composition
of similarly defined $\{\varphi_j\}_{j\in J}$ for the set of edges $J$ at which the orientation differs.
\end{remark}
The proof of Proposition~\ref{P:secondGraphPoly} is an exercise in the linear algebra of based vector spaces,
which we break up into a few lemmas.
\begin{lemma}[Lifting Momenta from Vertices to Edges] \label{L:momenta}
Let $F$ be a quasi-spanning forest of a graph $G$, let $C$ be its complementary cut set, and let $T_1$ and $T_2$ be the two maximal subtrees of $F$
that are not spanning trees of $G$. 
Let $p$ be a nonzero element of $K^{V,0}$,
and let $q\in K^E$ map to $p$: $\partial q=p$.
Then
\begin{equation}
m_{T_i}(p)=\sum_{\substack{ e\in C \\ h(e)\in T_i}} q_e - \sum_{\substack{ e\in C\\ t(e)\in T_i}} q_e
\label{eq:edgeMomenta}
\end{equation}
for both $i=1,2$. In particular, the momentum associated to $F$ is
\[
s_F(p)=\left(\sum_{\substack{ e\in C \\ h(e)\in T_i}} q_e - \sum_{\substack{ e\in C\\ t(e)\in T_i}} q_e\right)^2.
\]
\end{lemma}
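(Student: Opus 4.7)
The plan is to expand $m_{T_i}(p)=\sum_{v\in T_i}p_v$ using $\partial q=p$, swap the order of summation, and then use the structural fact that $T_i$ is a connected component of the forest $F$ to conclude that only edges of the cut set $C$ contribute.

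First I would write, for each vertex $v$,
\[
p_v = (\partial q)_v = \sum_{e : h(e)=v} q_e - \sum_{e : t(e)=v} q_e,
\]
and then sum over $v\in T_i$. Interchanging the order of summation, each edge $e$ contributes
\[
\bigl( \mathbf{1}_{h(e)\in T_i} - \mathbf{1}_{t(e)\in T_i} \bigr)\, q_e.
\]
In particular, any edge whose two endpoints both lie in $T_i$ (or both lie outside $T_i$) contributes $0$, so only edges with exactly one endpoint in $T_i$ survive.

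The key structural step is to see that every such edge lies in $C$. By Definition~\ref{D:quasiSpanningForest}, $F$ is a spanning forest of $G$ with one edge removed, and by hypothesis $T_1$ and $T_2$ are the two subtrees of $F$ that together replace a single spanning tree of some connected component of $G$. Hence $T_1$ and $T_2$ are each connected components of $F$, which means that no edge of $F$ has exactly one endpoint in $T_i$: such an edge would exit the component $T_i$ inside $F$. Therefore every edge with exactly one endpoint in $T_i$ belongs to $C=G-F$, and one obtains
\[
m_{T_i}(p) = \sum_{\substack{e\in C\\ h(e)\in T_i}} q_e \;-\; \sum_{\substack{e\in C\\ t(e)\in T_i}} q_e,
\]
which is equation~\eqref{eq:edgeMomenta}. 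The final statement about $s_F(p)$ then follows immediately by squaring, using Definition~\ref{D:qsfMomentum}.

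The only real subtlety is the bookkeeping in the last step: one must be explicit that an edge of $C$ with both endpoints in $T_i$ contributes $q_e - q_e = 0$ to the right-hand side (so the sum is correctly written over all $e\in C$ with $h(e)\in T_i$, not only those with $t(e)\notin T_i$), and that the same cancellation happens on the left. No calculation beyond this is needed; the lemma is essentially a restatement of $\partial q = p$ together with the combinatorial fact that $T_i$ is a connected component of the forest $F$.
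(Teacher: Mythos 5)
Your proposal is correct and follows essentially the same approach as the paper: expand $m_{T_i}(p) = \sum_{v\in T_i} p_v$ via $p_v = (\partial q)_v$, interchange sums, and use the fact that $T_i$ is a connected component of the forest $F$ to see that only cut-set edges survive. The paper's version does the bookkeeping slightly differently (first discarding edges of $T_2$ and the other components because their vertices are disjoint from $V(T_1)$, then discarding edges of $T_1$ by cancellation, then adding the harmless remaining edges of $C$ back in), while you package the same observation as the indicator $\mathbf{1}_{h(e)\in T_i} - \mathbf{1}_{t(e)\in T_i}$ vanishing unless exactly one endpoint lies in $T_i$; the final remark you flag about cut-set edges with both endpoints in $T_i$ contributing zero is precisely the cancellation the paper also notes. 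No gap.
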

\begin{proof}
The proof is the same for $T_1$ and $T_2$; we prove it for $T_1$.
The basis element $v\in K^V$ has a dual element $X_v\in (K^{V})\spcheck$. 
Denote the restriction of $X_v$ to $K^{V,0}$ by $X_v$ as well.
By definition of $q$ and $m_{T_1}(p)$,
\begin{align*}
m_{T_1}(p)&=\sum_{v\in V(T_1)} p_v\\
&= \sum_{v\in V(T_1)} X_v(p)\\
&= \sum_{v\in V(T_1)} X_v(\partial q)\\
&= \sum_{v\in V(T_1)} X_v\left( \sum_{e\in E(G)} q_e h(e)- q_e t(e) \right)\\
&= \sum_{v\in V(T_1)} \sum_{e\in E(G)} q_e X_v(h(e)- t(e)).
\end{align*}
Let $G_\alpha$ be the component of $G$ containing both $T_1$ and $T_2$.
The edges in $T_2$ and $G_\beta$ for $\beta\neq\alpha$ have vertices disjoint from $V(T_1)$, so
we may remove those edges from the sum:
\begin{equation}\label{eq:momenta}
m_{T_1}(p)= \sum_{v\in V(T_1)}  \sum_{e\in E(G_\alpha-T_2)} q_e X_v(h(e)- t(e) ).
\end{equation}

Note that
\[
X_{v}(h(e)-t(e)) = \begin{cases}
1 & \text{ if }v=h(e), v\neq t(e),\\
-1 &  \text{ if }v=t(e), v\neq h(e),\\
0 & \text{ otherwise.}
\end{cases}
\]
If both $h(e)$ and $t(e)$ are in $V(T_1)$, then 
 $q_e$ appears in the sum~\eqref{eq:momenta} with both a plus and a minus, so it cancels out.
In particular, every edge $e\in T_1$ has both $h(e)$ and $t(e)$ in $V(T_1)$, so
the edges in $T_1$ may be removed from the sum:
\begin{align*}
m_{T_1}(p)&=\sum_{v\in V(T_1)} \sum_{e\in E(G_\alpha-T_1-T_2)} q_e X_v(h(e)-  t(e) ) \\
& = \sum_{v\in V(T_1)} \sum_{e\in C\cap G_\alpha} q_e X_v(h(e)-t(e)).
\end{align*}
Finally, $X_v|_{G-G_\alpha}= 0$ for $v\in V(T_1)$, so we may put the other edges of $C$ back into the formula:
\begin{align*}
m_{T_1}(p)
& = \sum_{v\in V(T_1)} \sum_{e\in C} q_e X_v(h(e)-t(e))\\
& = \sum_{e\in C} \sum_{v\in V(T_1)} q_e X_v(h(e)-t(e))\\
&= \sum_{e\in C} \left( \sum_{h(e)\in V(T_1)} q_e -\sum_{t(e)\in V(T_1)} q_e\right).
\end{align*}
\end{proof}
Let $F$ be a subgraph of a graph $G$.  
The following diagram commutes and its row and column are exact:
\begin{diagram}
	&	&		&		& 0 & 	\\
	&  &  &&  \dTo & & &  &  & \\
	 &  &  &&  \mathbb{Z}^{E(F)} & & &  &  & \\
	 &  &  & &\dTo &  \rdTo^{\alpha}& &  &  & \\
0 & \rTo & H_1(G,\mathbb{Z}) & \rTo &\mathbb{Z}^{E(G)} & \rTo^{\partial}& \mathbb{Z}^{V(G),0} & \rTo &0 \\
  &      &        & \rdTo^{\beta} & \dTo       &     &                   &    & \\
  &      &        &       & \mathbb{Z}^{E(G-F)}& &                  &    &  \\
  &      &        &       & \dTo       &       &               &    &\\
  &      &        &       & 0.          &       &               &     &   \\
\end{diagram}
\begin{lemma}[\cite{BEK}, Lemma~2.1]\label{L:bekSpanningTree}
The subgraph  $F$ is
a spanning forest of $G$ if and only if $\alpha$ is an isomorphism,
which is equivalent to $\beta$ being an isomorphism.
In particular, $\det(\beta )=\pm 1$ when $F$ is a spanning forest and is zero otherwise.
\end{lemma}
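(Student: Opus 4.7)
The plan is to extract both the equivalence of the two isomorphism conditions and the determinant assertion via a single application of the Snake Lemma to the displayed diagram.

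First, I would apply the Snake Lemma to the map from the vertical short exact sequence
\[
0\to \mathbb{Z}^{E(F)}\to \mathbb{Z}^{E(G)}\to \mathbb{Z}^{E(G-F)}\to 0
\]
into the short exact sequence
\[
0\to \mathbb{Z}^{V(G),0}\xrightarrow{=}\mathbb{Z}^{V(G),0}\to 0 \to 0
\]
given by the vertical arrows $(\alpha,\partial,0)$. Commutativity is immediate from $\alpha=\partial\circ i$, where $i$ is the inclusion. Since the third vertical arrow is $0$, its kernel is all of $\mathbb{Z}^{E(G-F)}$ and its cokernel vanishes; since $\partial$ is surjective, $\Coker\partial=0$. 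The Snake Lemma then yields the exact sequence
\[
0\to \Ker\alpha \to H_1(G,\mathbb{Z}) \xrightarrow{\beta} \mathbb{Z}^{E(G-F)} \to \Coker\alpha \to 0,
\]
whose middle map is induced by the horizontal quotient $\mathbb{Z}^{E(G)}\to\mathbb{Z}^{E(G-F)}$ and therefore agrees with $\beta$. Consequently $\Ker\beta\cong \Ker\alpha$ and $\Coker\beta\cong \Coker\alpha$, so $\alpha$ is an isomorphism if and only if $\beta$ is.

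Second, I would interpret $\Ker\alpha$ and $\Coker\alpha$ graph-theoretically. Because $\alpha$ is the restriction of $\partial$ to edges in $F$, its kernel is $H_1(F,\mathbb{Z})$, which vanishes precisely when $F$ is a forest. Its image is the subgroup of $\mathbb{Z}^{V(G)}$ of vertex weights summing to zero on each connected component of $F$; dividing by $\mathbb{Z}^{V(G),0}$ shows that $\Coker\alpha$ is free of rank equal to the number of components of $F$ minus the number of components of $G$, vanishing iff the components of $F$ are exactly those of $G$. Combining these, $\alpha$ (equivalently $\beta$) is an isomorphism iff $F$ is a forest whose components are spanning trees of the components of $G$, i.e., iff $F$ is a spanning forest.

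For the determinant statement: when $F$ is a spanning forest, both $H_1(G,\mathbb{Z})$ and $\mathbb{Z}^{E(G-F)}$ are free abelian of rank $h_1(G)=|E(G)|-|V(G)|+c(G)=|E(G-F)|$, so in any integral bases the matrix of $\beta$ is an invertible integer matrix whose inverse is also integer-valued, forcing $\det\beta=\pm 1$. If $F$ has $|V(G)|-c(G)$ edges but is not a spanning forest (so $\beta$ is square), then $F$ must contain a cycle; thus $\Ker\beta\cong H_1(F,\mathbb{Z})\neq 0$ and $\det\beta=0$. The main obstacle is essentially bookkeeping: verifying that the snake map really coincides with $\beta$ and that $\Coker\alpha$ correctly counts the excess components; no step requires a hard computation.
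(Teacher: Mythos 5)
Your proof is correct, but note that the paper does not actually prove this lemma: it cites~\cite{BEK}, Lemma~2.1, without reproducing the argument, so there is no in-paper proof to compare against. The proof you give via the Snake Lemma applied to the vertical short exact sequence $0\to\mathbb{Z}^{E(F)}\to\mathbb{Z}^{E(G)}\to\mathbb{Z}^{E(G-F)}\to 0$ mapping to $0\to\mathbb{Z}^{V(G),0}\xrightarrow{=}\mathbb{Z}^{V(G),0}\to 0\to 0$ is the natural reading of the displayed diagram and is essentially what the BEK reference does. Your bookkeeping is right: the induced map between kernels is $\beta$ by construction, so $\Ker\beta\cong\Ker\alpha=H_1(F,\mathbb{Z})$ and $\Coker\beta\cong\Coker\alpha$, the latter being free of rank $c(F)-c(G)$ because the components of $F$ refine those of $G$. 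The only remark worth making explicit, to justify the ``is zero otherwise'' clause, is that the determinant statement implicitly assumes $|E(G-F)|=h_1(G)$, i.e.\ $|E(F)|=|V(G)|-c(G)$, so that $\beta$ is square; you handle this correctly, observing that a non-spanning-forest with that edge count must contain a cycle, whence $H_1(F,\mathbb{Z})\neq 0$ and $\det\beta=0$.
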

\begin{lemma}[Momenta in terms of Pl\"{u}cker Coordinates]\label{L:pluckerMomenta}
Let $G$, $F$, $C$, $T_1$, $T_2$, and $p$ be defined as in Lemma~\ref{L:momenta}.
Let $\beta$ be the projection from $H_1(G,p)$ to $K^C$.
Then in a basis of $H_1(G,p)$ consisting of $q$ in the preimage of $p$ and an integral basis of $H_1(G)$,
\[
\det(\beta)=\pm m_{T_1}(p)=\mp m_{T_2}(p).
\]
In particular, $\Plucker_C(H_1(G,p))^2= s_C(p)$ in this basis.
\end{lemma}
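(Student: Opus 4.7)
The plan is to compute $\det(\beta)$ by a single cofactor expansion along the column of $q$, identify the surviving minors via the BEK Lemma~\ref{L:bekSpanningTree}, and recognize the resulting sum as the edge-expression for $m_{T_1}(p)$ given by Lemma~\ref{L:momenta}. First I note dimensions: since $F$ is a quasi-spanning forest, $|C|=h_1(G)+1=\dim H_1(G,p)$, so $\beta$ is represented in the given basis by a square matrix $M$ of size $|C|\times|C|$. Order the columns so that the first $h_1(G)$ come from an integral basis $\{\gamma_1,\dots,\gamma_{h_1(G)}\}$ of $H_1(G,\mathbb{Z})$ and the last column is $q$; the $e$-th row is indexed by $e\in C$, with entries the $e$-coordinates of the respective basis vectors.

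Expanding $\det M$ along the last column gives
\[
\det(\beta)=\sum_{e\in C}(-1)^{|C|+\mathrm{row}(e)}\,q_e\cdot \det M_{\hat{e}},
\]
where $M_{\hat{e}}$ is the square minor obtained by deleting the $e$-row and the $q$-column. The matrix $M_{\hat{e}}$ is exactly the matrix of the projection $\beta_e\colon H_1(G,\mathbb{Z})\to \mathbb{Z}^{C\setminus\{e\}}$ induced by the inclusion of edges $C\setminus\{e\}=E(G)\setminus E(F\cup\{e\})$. By Lemma~\ref{L:bekSpanningTree} applied to the subgraph $F\cup\{e\}$, this minor equals $\pm 1$ if $F\cup\{e\}$ is a spanning forest of $G$ and vanishes otherwise. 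Since $F$ differs from a spanning forest by exactly one edge (connecting $T_1$ and $T_2$), $F\cup\{e\}$ is a spanning forest if and only if $e$ has one endpoint in $T_1$ and the other in $T_2$. Hence
\[
\det(\beta)=\sum_{\substack{e\in C\\ \{h(e),t(e)\}\cap V(T_1)\neq\emptyset\\ \{h(e),t(e)\}\cap V(T_2)\neq\emptyset}}\varepsilon_e\,q_e,
\]
for some signs $\varepsilon_e\in\{\pm 1\}$ coming from the two sources above.

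Next I compare this sum to the formula of Lemma~\ref{L:momenta}. After cancelling edges with both endpoints in $V(T_1)$ and noting that edges of $C$ outside the component containing $T_1,T_2$ contribute zero, that lemma gives
\[
m_{T_1}(p)=\sum_{\substack{e\in C\\ h(e)\in T_1,\,t(e)\in T_2}} q_e-\sum_{\substack{e\in C\\ t(e)\in T_1,\,h(e)\in T_2}} q_e,
\]
which is supported on precisely the same set of edges as $\det(\beta)$. The sign bookkeeping step is then to verify that, with a single consistent choice of orientation of the integral basis of $H_1(G,\mathbb{Z})$ (which is the one ambiguity in Lemma~\ref{L:bekSpanningTree}), the signs $\varepsilon_e$ agree up to a global sign with the signs $+1$ for $h(e)\in T_1$ and $-1$ for $t(e)\in T_1$. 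This is the main obstacle: it is a bookkeeping argument rather than a deep one, but one needs to track the sign of the cofactor $(-1)^{|C|+\mathrm{row}(e)}$ against the sign of $\det M_{\hat{e}}$ coming from reordering $C\setminus\{e\}$ relative to a compatible orientation of $F\cup\{e\}$. Since the final statement is determined only up to a global sign, the bookkeeping can be done by fixing one particular bridging edge $e_0$ and comparing the other terms to it.

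Finally, the identity $m_{T_1}(p)=-m_{T_2}(p)$, which is built into Definition~\ref{D:qsfMomentum}, yields $\det(\beta)=\pm m_{T_1}(p)=\mp m_{T_2}(p)$. For the Pl\"ucker statement, $\operatorname{Pl\ddot{u}cker}_C(H_1(G,p))$ is by definition the determinant of the projection of $H_1(G,p)$ onto the coordinates indexed by $C$ in the chosen basis, i.e.\ $\det(\beta)$. Squaring eliminates the sign ambiguity and gives $\operatorname{Pl\ddot{u}cker}_C(H_1(G,p))^2=m_{T_1}(p)^2=s_C(p)$, as required.
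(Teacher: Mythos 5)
Your proposal follows the same outline as the paper's proof: represent $\beta$ in the given basis, expand $\det\beta$ along the column of $q$, apply Lemma~\ref{L:bekSpanningTree} to identify the surviving cofactors as $\pm 1$ exactly when $F\cup\{e\}$ is a spanning forest (equivalently, $e$ bridges $T_1$ and $T_2$), and match the resulting sum against the edge formula for $m_{T_1}(p)$ from Lemma~\ref{L:momenta}. Up to that point you and the paper agree, and your observations (that $|C|=h_1(G)+1$ so $\beta$ is square, that the minor $M_{\hat e}$ is the BEK matrix for $F\cup\{e\}$, and that $s_C(p)$ is recovered by squaring) are all correct.

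The gap is the sign verification, which you explicitly leave as ``bookkeeping'' but which is in fact where the paper's proof does nearly all its work. The issue is that the sign $\varepsilon_e$ in your expansion has two independent sources — the cofactor sign $(-1)^{|C|+\mathrm{row}(e)}$ and the $\pm 1$ from Lemma~\ref{L:bekSpanningTree}, whose sign depends on the chosen integral basis — and there is no a priori reason these conspire to produce the pattern $+1$ for $h(e)\in T_1$, $-1$ for $t(e)\in T_1$. The paper handles this by reducing to two bridging edges $e_1,e_2$, choosing a \emph{circuit basis} of $H_1(G,\mathbb{Z})$ adapted to the spanning forest $(G-C)\cup e_1$ (which forces most off-diagonal entries of the minors to vanish), using multilinearity of the determinant to reduce $\det\gamma_1\pm\det\gamma_2$ to a single matrix whose first column is controlled by the circuit generator $\ell_2$ of $H_1((G-C)\cup e_1\cup e_2)$, and then showing that $\ell_2$ must contain both $e_1$ and $e_2$ with signs dictated by which $T_i$ contains their heads. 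Your one-sentence remark that one should ``fix one bridging edge $e_0$ and compare the others to it'' is the right starting move, but the circuit-basis construction and the determinant manipulation are not routine bookkeeping; they are the substance of the lemma, so as written the proof is incomplete.
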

\begin{proof}
Let $m=h_1(G)$.
Write $\beta$ in a basis as in the statement of the lemma: 
\[
\begin{pmatrix}
X_1(q) & X_1(\ell_2) &  \dots & X_1(\ell_{m+1}) \\
X_2(q) & X_2(\ell_2) &  \dots & X_2(\ell_{m+1}) \\
\vdots & \vdots & \ddots & \vdots \\
X_{m+1}(q) & X_{m+1}(\ell_2) &\dots & X_{m+1}(\ell_{m+1})
\end{pmatrix}.
\]
The columns correspond to a $q$ such that $\partial q=p$ followed by the integral basis $\ell_2,\dots,\ell_{m+1}$ of $H_1(G)$. 
The rows correspond to the elements of  the cut set
\[
C=\{e_1,\dots,e_{m+1}\}.
\]
Write $q_i$ for $X_i(q)$.
Expand the determinant of $\beta$ along the first column:
\begin{align}
\det \beta = q_1 \Plucker_{C-e_1}(H_1(G)) -&q_2 \Plucker_{C-e_2}(H_1(G))+\notag\\
&\dots\pm q_{m+1} \Plucker_{C-e_{m+1}}(H_1(G)).\label{eq:detbeta}
\end{align}
The Pl\"{u}cker coordinate $\Plucker_{C-e_i}(H_1(G))$ is computed in an integral basis, so according to Lemma~\ref{L:bekSpanningTree}, 
\[
\Plucker_{C-e_i}(H_1(G)) = 
\begin{cases}
\pm 1 & \text{ if } (G-C)\cup e_i \text{ is a spanning forest,}\\
0 & \text{ otherwise.}
\end{cases}
\]
In particular, the coefficient of $q_i$ in $\det \beta$ vanishes if $(G-C)\cup e_i$ is not a spanning forest, which happens exactly when both
  the head and the tail of $e_i$ are in the same component of $F$, in agreement with Equation~\eqref{eq:edgeMomenta} where $q_e$
  may appear twice in the sum but with opposite signs.
  
We must verify that the nonzero coefficients of the $q_i$ in Equation~\eqref{eq:detbeta}
agree with Equation~\eqref{eq:edgeMomenta}. In this case, we know that $(G-C)\cup e_i$ is a spanning tree, the coefficients
are $\pm 1$,
and we need to check that  
  \begin{enumerate}
  \item the signs on $q_i$
  and $q_j$ in $\det \beta$ agree if the heads of both edges $e_i$ and $e_j$ or the tails of both are in $T_1$;
  \item and the signs on $q_i$ and $q_j$ are opposite if the head of one of the edges $e_i$ or $e_j$ and the tail of the other edge is in $T_1$.
  \end{enumerate}
  Without loss of generality, assume $i=1$ and $j=2$.
  In either case above,
  $(G-C)\cup e_1$ is a spanning forest.
  At this point, the value of $\det \beta$ only depends on the integral basis of $H_1(G)$ for an overall sign, so 
  we are free to choose the basis $\{\ell_2,\dots,\ell_{m+1}\}$ of $H_1(G,\mathbb{Z})$.
  In particular, choose the basis to be a circuit basis with respect to the spanning
  forest $(G-C)\cup e_1$. Recall that a \emph{circuit basis with respect to a spanning forest $F$} is a basis 
  of $H_1(G,\mathbb{Z})$ in which each element is a sum of edges in $F$ plus one edge not in $F$ with coefficients $\pm 1$.
  A circuit basis exists by Lemma~\ref{L:bekSpanningTree}.
  For such a basis and $i$,$j\in \{2,\dots,m+1\}$, the edges $e_i$ are not in $\ell_j$ unless $j=i$.
  For this basis, note that
  \begin{align*}
  X_i(\ell_i)\neq 0 & \text{ for }i\in\{2,\dots, m+1\}\\
  X_i(\ell_j)=0 & \text{ for }j\neq i,\, i,j\in\{2,\dots,m+1\}.
  \end{align*}
Then the determinant begins
\[
\det \beta = q_1 \det \gamma_1 -q_2 \det \gamma_2 + \cdots
\]
where
\begin{align*}
\gamma_1 &= 
\begin{pmatrix}
X_2(\ell_2) & X_2(\ell_3) & \dots & X_2(\ell_{m+1}) \\
X_3(\ell_2) & X_3(\ell_3) & \dots & X_3(\ell_{m+1}) \\
\vdots & \vdots & \ddots & \vdots \\
X_{m+1}(\ell_2) & X_{m+1}(\ell_3) & \dots & X_{m+1}(\ell_{m+1})
\end{pmatrix}\\
&=
\begin{pmatrix}
X_2(\ell_2) & X_2(\ell_3) & \dots & X_2(\ell_{m+1}) \\
0 & X_3(\ell_3) & \dots & X_3(\ell_{m+1}) \\
\vdots & \vdots & \ddots & \vdots \\
0 & X_{m+1}(\ell_3) & \dots & X_{m+1}(\ell_{m+1})
\end{pmatrix}\\
\end{align*}
and
\begin{align*}
\gamma_2&=
\begin{pmatrix}
X_1(\ell_2) & X_1(\ell_3) & \dots & X_1(\ell_{m+1}) \\
X_3(\ell_2) & X_3(\ell_3) & \dots & X_3(\ell_{m+1}) \\
\vdots & \vdots & \ddots & \vdots \\
X_{m+1}(\ell_2) & X_{m+1}(\ell_3) & \dots & X_{m+1}(\ell_{m+1})
\end{pmatrix}\\
&=
\begin{pmatrix}
X_1(\ell_2) & X_1(\ell_3) & \dots & X_1(\ell_{m+1}) \\
0 & X_3(\ell_3) & \dots & X_3(\ell_{m+1}) \\
\vdots & \vdots & \ddots & \vdots \\
0 & X_{m+1}(\ell_3) & \dots & X_{m+1}(\ell_{m+1})
\end{pmatrix}.
\end{align*}
In fact, many other entries are zero by our choice of basis,
but we have only expressed the zeros where we need them.
With the exception of the first row, $\gamma_1$ and $\gamma_2$ agree, so by the multilinearity of the determinant,
\begin{align*}
\det(\gamma_1)&\pm \det(\gamma_2) =\\ & 
\det 
\begin{pmatrix}
(X_2\pm X_1)(\ell_2) & (X_2 \pm X_1)(\ell_3) & \dots & (X_2\pm X_1)(\ell_{m+1}) \\
0 & X_3(\ell_3) & \dots & X_3(\ell_{m+1}) \\
\vdots & \vdots & \ddots & \vdots \\
0 & X_{m+1}(\ell_3) & \dots & X_{m+1}(\ell_{m+1})
\end{pmatrix}.
\end{align*}
  
Because we have assumed that $e_1$ and $e_2$ fit into one of the two cases above, 
both $(G-C)\cup e_1$ and $(G-C)\cup e_2$ are spanning forests
and have no first homology. The circuit generator $\ell_2$
of $H_1( (G-C)\cup e_1\cup e_2)$ must contain both $e_1$ and $e_2$ because 
\begin{align*}
h_1(\ell_2-e_1)&\leq h_1( (G-C)\cup e_2) =0 \\
h_1(\ell_2-e_2)&\leq h_1( (G-C)\cup e_1) =0.
\end{align*}
Therefore,
\begin{align*}
X_1(\ell_2)=X_2(\ell_2) =\pm 1 &\text{ if only one of }e_1\text{ or }e_2\text{ has its head in }T_1\text{, and}\\
X_1(\ell_2)=\pm X_2(\ell_2) =\pm 1 &\text{ if  both }e_1\text{ and }e_2\text{ have their heads in the same }T_i.
\end{align*}
By changing the orientation of every edge in $\ell_2$ if necessary, we may assume $X_2(\ell_2)=1$ so that the cases
are $X_1(\ell_2)=1$ or $X_1(\ell_2)=-1$.
The two cases noted above become
\begin{align*}
(X_2-X_1)(\ell_2) &= 0 \text{ if } X_1(\ell_2)=1\\
(X_2+X_1)(\ell_2) &= 0 \text{ if } X_1(\ell_2)=-1.
\end{align*}
Matching signs, the first column of the preceding matrix is zero.
Therefore,
\begin{align*}
\det \gamma_1 = \begin{cases}
\det \gamma_2 &\text{ if only one of $e_1$ or $e_2$ has its head in $T_1$,}\\
 -\det \gamma_2 &\text{ if both $e_j$ have their heads in the same $T_i$,}
 \end{cases}
\end{align*}
and
\begin{align*}
\det \beta = \begin{cases}
(\det\gamma_1) (q_1-q_2)+\dots &\text{ if only one of }h(e_1)\text{ or }h(e_2)\text{ is in }T_1,\\
 (\det\gamma_1) (q_1+q_2)+\dots &\text{ if both }h(e_j)\in T_i.
 \end{cases}
\end{align*}
This completes the comparison of the signs on $q_1$ and $q_2$ and the proof.
\end{proof}

\begin{proof}[Proof of Proposition~\ref{P:secondGraphPoly}]
By Proposition~\ref{P:configurationPolynomial}, the configuration determinant is
\[
\Psi_{H_1(G,p)}(A)=\sum_{
	\substack{C\subset E(G) \\ \abs{C}=h_1(G)+1 } } 
	\Plucker_C(H_1(G,p))^2 \prod_{e\in C} A_e.
\]
By Lemma~\ref{L:pluckerMomenta},  $\Plucker_C(H_1(G,p))^2= s_C(p)$ when $C$ is a cut set.

The final step is to show that if $\Plucker_C(H_1(G,p))^2$ is nonzero,
then $C$ is a cut set.
There is a commutative diagram exact in the row and column
\begin{diagram}
	&	&		&		& 0 & 	\\
	&  &  &&  \dTo & & &  &  & \\
	 &  &  &&  K^{E(G-C)} & & &  &  & \\
	 &  &  & &\dTo &  \rdTo & &  &  & \\
0 & \rTo & H_1(G,p) & \rTo &\mathbb{K}^{E(G)} & \rTo^{\partial}& K^{V,0}/Kp & \rTo &0 \\
  &      &        & \rdTo^{\pi_C} & \dTo       &     &                   &    & \\
  &      &        &       & K^{C}& &                  &    &  \\
  &      &        &       & \dTo       &       &               &    &\\
  &      &        &       & 0.          &       &               &     &   \\
\end{diagram}
Note that $\Plucker_C(H_1(G,p))$ is the determinant of $\pi_C$ is some basis, 
so
if $\Plucker_C(H_1(G,p))$ is nonzero, then $\pi_C$
is an isomorphism.
For every edge $e\in C$, the natural map $\beta_e: H_1(G)\to K^{C-e}$ factors as 
\[
\beta_e:  H_1(G) \hookrightarrow H_1(G,p) \xrightarrow[\cong]{\pi_C} K^C \twoheadrightarrow K^{C-e}
\]
The image of $H_1(G)$ in $K^C$ has codimension one, so there is 
at least one $\tilde{e}\in C$ 
for which projecting to $K^{C-\tilde{e}}$ makes $\beta_{\tilde{e}}$ an isomorphism.
By Lemma~\ref{L:bekSpanningTree}, $(G-C)\cup \tilde{e}$ is a spanning forest, which means
that $C$ is a cut set.
\end{proof}
\begin{remark}
One could define higher-order graph polynomials by replacing $Kp$ with higher dimensional
subspaces of $K^{V,0}$ or even replacing $0\to Kp$ with other subcomplexes of $K^E\to K^{V,0}$.
To our knowledge, it is an open problem to prove combinatorial formulas
for these generalizations of graph polynomials.
Based on the review~\cite{BW}, we suspect such higher-order graph polynomials
may correspond to the polynomials $\mathcal{W}^{(j)}$ defined there.
Moreover, a general framework for such a correspondence might simply
need to ``lift'' the all-minor matrix-tree theorem by using the differential $\partial:\mathbb{Z}^E\to\mathbb{Z}^V$.
\end{remark}

\section{Singularities of Configuration Hypersurfaces}\label{S:hypersurfaces}
In this section, we prove the main result, Theorem~\ref{T:singularityRank}.
The differentiation in this section requires
us to assume that we are working over a field of characteristic zero.
First, we describe how differentiating configuration polynomials 
yields other configuration polynomials in Section~\ref{S:restrictionsAndProjections}.
The main result is proven in Section~\ref{S:singularities}.
An application to tangent cones 
is provided in Section~\ref{S:tangentSpaces}
where we require the field to be algebraically closed.
\subsection{Restrictions of Configurations} \label{S:restrictionsAndProjections}
Every based vector space $K^E$ has natural based subspaces $K^F$ for $F\subseteq E$. 
%
For a configuration $W$, there is an induced configuration in $K^F$: the \emph{restriction} $W^F= K^F\cap W$. 
%
Every configuration is a pair of a based vector space and a subspace, so $\left(W^F,K^F\right)$ and $\left(W^F,K^E\right)$
are two different configurations.
For example, if $W\subseteq K^F\subseteq K^E$ for some subset $F\subseteq E$, then  the formulas for the
configuration polynomials given by Proposition~\ref{P:configurationPolynomial} are
\[
\Psi_{\left(W,K^F\right)}(A)=
\sum_{
	\substack{G\subset F \\ \abs{G}=\dim W} } 
	\Plucker_G(W)^2 \prod_{g\in G} A_g,
\]
and
\[
\Psi_{\left(W,K^E\right)}(A)=
\sum_{
	\substack{G\subset E \\ \abs{G}=\dim W} } 
	\Plucker_G(W)^2 \prod_{g\in G} A_g.
\]
The Pl\"{u}cker coordinates 
of $\mathbb{P}(\bigwedge^\ell K^F)$ in $\mathbb{P}(\bigwedge^\ell K^E)$
vanish for all subsets $G\subset E$  that are not contained in $F$.
Therefore, these polynomials are identical.
We suppress the based vector space from the notation unless the context demands it.
\begin{example} \label{E:inducedconfigurationPolynomial1}
We will continue with the configuration $W$ of Example~\ref{E:configuration1}, whose configuration
polynomial we computed to be
\[
\Psi_W(A)=A_1A_2+4A_1A_3+4A_2A_3.
\]

Let $F=\{e_2,e_3\}\subseteq E$.
%
Consider the restriction configuration $W^F$.
In the basis $2e_3-e_2$ for $W^F$, the bilinear form $B_F = A_2 X_2^2+A_3 X_3^2$ restricted from $K^F$ is
\[
B_F|_{W_F}=
A_2
\begin{pmatrix}
1
\end{pmatrix}
+
A_3
\begin{pmatrix}
4
\end{pmatrix}
= \begin{pmatrix}
A_2+4A_3
\end{pmatrix}.
\]
The corresponding configuration polynomial is $\Psi_{W^F}(A)=A_2+4A_3$.
In the same basis for $W^F$, the bilinear form $B_E = A_1X_1^2+A_2X_2^2+A_3X_3^2$ restricted from $K^E$ is
\[
B_E|_{W_F}=
A_1
\begin{pmatrix}
0
\end{pmatrix}
+
A_2
\begin{pmatrix}
1
\end{pmatrix}
+
A_3
\begin{pmatrix}
4
\end{pmatrix}
= \begin{pmatrix}
A_2+4A_3
\end{pmatrix}.
\]
Thus, $\Psi_{\left(W^F,K^E\right)}(A_1,A_2,A_3)=\Psi_{W^F}(A_2,A_3)$.
In fact,
\[
\frac{\partial}{\partial A_1} \Psi_W(A_1,A_2,A_3) = \Psi_{W^F}(A_2,A_3).
\]
\end{example}

Restrictions play a  prominent role
in analyzing the singularities of configuration hypersurfaces.
We start with the case of $W^{E-e}$ for simplicity, and then proceed by induction.
By Proposition~\ref{P:configurationPolynomial}, the configuration polynomial for 
a nonzero restriction $W^{E-e}$ is
\[
\Psi_{W^{E-e}}(A)=
\sum_{
	\substack{F\subset E-e \\ \abs{F}=\dim W^{E-e}} } 
	\Plucker_F\left(W^{E-e}\right)^2 \prod_{f\in F} A_f.
\]
For comparison, the partial of $\Psi_W(A)$ with respect to $A_e$ is
\[
\partial_e \Psi_W(A)= \sum_{ \substack{F\subset E \\ \abs{F}=\dim W \\ e\in F} }
\Plucker_F(W)^2 \prod_{f\in F-e} A_f.
\]
Here we write $\partial_e$ for $\partial_{A_e}$.
Using the bijection $F\mapsto F-e$ between subsets of $E$ of size $\dim W$ containing $e$
and subsets $\tilde{F}$ of $E-e$ of size $\dim W-1$, 
\begin{equation} \label{eq:onepartial}
\partial_e \Psi_W(A)  
=\sum_{\substack{\tilde{F}\subset E-e \\ \abs{\tilde{F}}=\dim W-1}} \Plucker_{\tilde{F}\cup e}(W)^2 
\prod_{f\in \tilde{F}} A_f.
\end{equation}
This last polynomial differs from $\Psi_{W^{E-e}}$ for two possible reasons:
\begin{enumerate}
\item
in general,
\[
\Plucker_{\tilde{F}\cup e}(W)^2\neq \Plucker_{\tilde{F}}\left(W^{E-e}\right)^2;
\]
\item 
if $W\subseteq K^{E-e}$, then $W=W^{E-e}$, and the sets $\tilde{F}$ indexing the sum
in $\partial_e \Psi_W(A)$ have size $\dim W-1$,
but the sets $F$ indexing the sum in $\Psi_{W^F}(A)$ have size $\dim W$.
\end{enumerate}
The following lemmas clarify these differences,
which culminate in Corollary~\ref{C:partialsOfConfigurationPolynomials}.
\begin{lemma}[Trivial Restriction, Simple Case] \label{L:trivialPartialOne}
Suppose that $W$ is a nonzero configuration in $K^E$. The following conditions are equivalent.
\begin{enumerate}
\item $W\subseteq K^{E-e}$,
\item $W=W^{E-e}$,
\item $\Psi_W(A) = \Psi_{W^{E-e}}(A)$,
\item $\Plucker_F(W) = \Plucker_F\left(W^{E-e}\right)$ for all $F\subseteq E$ with
$\abs{F}=\dim W$,
\item $\Plucker_F(W) = 0$ if $e\in F$, $\abs{F}=\dim W$,
\item $A_e$ does not appear in the polynomial $\Psi_W(A)$,
\item $\partial_e \Psi_W(A) = 0$.
\end{enumerate}
\end{lemma}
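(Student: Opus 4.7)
The plan is to establish the seven-way equivalence by the cycle
$(1)\Leftrightarrow(2)\Rightarrow(4)\Rightarrow(3)\Rightarrow(6)\Leftrightarrow(7)\Rightarrow(5)\Rightarrow(1)$.
The base equivalence $(1)\Leftrightarrow(2)$ is immediate from the definition $W^{E-e}=K^{E-e}\cap W$.

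Most of the remaining implications are essentially formal. For $(2)\Rightarrow(4)$, once $W=W^{E-e}$ the two configurations are literally the same subspace of $K^E$, so their Pl\"ucker coordinates agree, and since $W\subseteq K^{E-e}$ any such coordinate indexed by a set containing $e$ vanishes. Proposition~\ref{P:configurationPolynomial} then delivers $(4)\Rightarrow(3)$. Because $\Psi_{W^{E-e}}(A)$ is a polynomial in the variables $A_f$ with $f\neq e$, the implication $(3)\Rightarrow(6)$ is automatic. The equivalence $(6)\Leftrightarrow(7)$ follows from the observation inside the proof of Proposition~\ref{P:configurationPolynomial} that $A_e$ has degree at most one in $\Psi_W(A)$; in characteristic zero this means that $A_e$ appears in $\Psi_W$ exactly when $\partial_e\Psi_W\neq 0$. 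Finally, $(7)\Rightarrow(5)$ is read off directly from equation~\eqref{eq:onepartial}: if $\partial_e\Psi_W(A)$ vanishes as a polynomial, then each of its coefficients $\Plucker_{\tilde{F}\cup e}(W)^2$ must vanish.

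The one substantive step is $(5)\Rightarrow(1)$, which I would prove by contraposition. Assume $W\not\subseteq K^{E-e}$, so that $X_e$ restricts to a surjection $\pi_e\colon W\to K$. If $\dim W=1$, any $w\in W$ with $X_e(w)\neq 0$ already gives $\Plucker_{\{e\}}(W)\neq 0$, contradicting (5). Otherwise set $W'=\ker\pi_e=W^{E-e}\subseteq K^{E-e}$, a nonzero configuration of dimension $\dim W-1$. Some Pl\"ucker coordinate of $W'$ must be nonzero, so there exists $F'\subseteq E-e$ with $|F'|=\dim W-1$ such that $\pi_{F'}|_{W'}$ is an isomorphism. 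Pick $w_0\in W$ with $X_e(w_0)=1$ and a basis $w_1,\dots,w_{\dim W-1}$ of $W'$; together they form a basis of $W$. In the corresponding $\dim W\times\dim W$ matrix representing $\pi_{F'\cup\{e\}}|_W$, the column indexed by $e$ is the column vector with first entry $1$ and all other entries zero, so expanding the determinant along that column yields $\pm\Plucker_{F'}(W')\neq 0$, contradicting (5).

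The main obstacle is this last implication, since the others reduce to definitions, polynomial bookkeeping, or direct appeals to Proposition~\ref{P:configurationPolynomial} and equation~\eqref{eq:onepartial}; the key constructive step is exhibiting, from the assumption $W\not\subseteq K^{E-e}$, an explicit splitting $W=W'\oplus Kw_0$ with $X_e(w_0)=1$ and then extending a nonzero Pl\"ucker coordinate of $W'$ across the index $e$ to a nonzero Pl\"ucker coordinate of $W$.
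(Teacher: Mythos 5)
Your proof is correct, and the one substantive implication, $(5)\Rightarrow(1)$, is handled by a genuinely different route from the paper's. The paper argues abstractly: if $\Plucker_F(W)=0$ for every $F$ containing $e$, then $\det W$ lies in the linear subspace $\mathbb{P}\bigl(\bigwedge^{\dim W}K^{E-e}\bigr)\subseteq\mathbb{P}\bigl(\bigwedge^{\dim W}K^{E}\bigr)$, and it concludes $W\subseteq K^{E-e}$ by invoking the fact that the Pl\"ucker embedding identifies $\Gr(\dim W,K^{E-e})$ with exactly the intersection of $\Gr(\dim W,K^E)$ with that coordinate linear subspace. You instead prove the contrapositive constructively: from $W\not\subseteq K^{E-e}$ you split $W=W^{E-e}\oplus Kw_0$ with $X_e(w_0)=1$, pick $F'\subseteq E-e$ with $\Plucker_{F'}(W^{E-e})\neq 0$, and do a cofactor expansion along the $e$-column to exhibit $\Plucker_{F'\cup\{e\}}(W)=\pm\Plucker_{F'}(W^{E-e})\neq 0$. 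Your version is more elementary --- it uses only the description of Pl\"ucker coordinates as minors --- whereas the paper's is shorter provided one already knows how Grassmannians restrict to coordinate subspaces. Your ordering of the easy implications is also a minor improvement: the paper declares $(3)\Rightarrow(4)$ straightforward, but reading Pl\"ucker coordinates off the polynomial would involve sign ambiguities, whereas your $(4)\Rightarrow(3)$ is the natural direction of Proposition~\ref{P:configurationPolynomial}. One small remark: you do not actually need characteristic zero for $(6)\Leftrightarrow(7)$, since $A_e$ appears to degree at most one in $\Psi_W$, so $\partial_e\Psi_W$ is literally the coefficient of $A_e$ and the equivalence holds in every characteristic.
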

\begin{proof}
The following implications are straightforward from 
the definitions and Proposition~\ref{P:configurationPolynomial}:
\[
(1) \Leftrightarrow (2) \Rightarrow (3) \Rightarrow (4) \Rightarrow (5) \Leftrightarrow (6) \Leftrightarrow (7).
\]
To complete the equivalence, we show that (5) implies (1). Note
that $\mathbb{P}\left(\bigwedge^\ell K^{E-e}\right)$ is the linear subspace of $\mathbb{P}\left(\bigwedge^\ell K^E\right)$
defined in the Pl\"{u}cker coordinates by $\Plucker_F=0$ for those $F$ containing $e$.
In particular, if $\Plucker_F(W)=0$ for all $F$ containing $e$, then $\det W \in \mathbb{P}\left(\bigwedge^\ell K^{E-e}\right)$.
Therefore, $W\subseteq K^{E-e}$. 
\end{proof}
The previous lemma generalizes by induction to subsets of $E$ with more than one element.
\begin{lemma}[Trivial Restriction] \label{L:trivialPartialMore}
Let $W$ be a nonzero configuration in $K^E$.
The following are equivalent for a subset $H\subseteq E$:
\begin{enumerate}
\item $W\subseteq K^H$,
\item $W=W^H$,
\item $\Psi_{W}(A)=\Psi_{W^H}(A)$,
\item $\Plucker_F(W)=\Plucker_F\left(W^H\right)$ for all $F\subseteq E$ 
with $\abs{F}=\dim W$,
\item $\Plucker_F(W)=0$ if $F\cap (E-H)\neq \emptyset$, $\abs{F}=\dim W$,
\item $A_e$ does not appear in $\Psi_{W}(A)$ for all $e\in E-H$,
\item $\partial_e \Psi_{W}(A) = 0$ for all $e\in E-H$.
\end{enumerate}
\end{lemma}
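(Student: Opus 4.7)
My plan is to reduce this lemma to the one-edge case already proven in Lemma~\ref{L:trivialPartialOne}, while collecting the equivalences into a cycle. The implications $(1)\Leftrightarrow(2)$ are immediate from the definition $W^H=K^H\cap W$: either equality is equivalent to the containment $W\subseteq K^H$. The implication $(2)\Rightarrow(3)$ is formal, since $\Psi_W$ depends only on the configuration. Proposition~\ref{P:configurationPolynomial} then yields $(3)\Rightarrow(4)\Rightarrow(5)$, since the Pl\"{u}cker coordinates of $W^H$ viewed inside $K^E$ vanish on every index set meeting $E-H$, and the configuration polynomial determines the squared Pl\"{u}cker coordinates as its coefficients (which are nonnegative integers\,/\,squares in characteristic zero, but more importantly the squared Pl\"{u}cker data is recovered by reading off monomial coefficients).

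Next I would dispatch the polynomial reformulations: $(5)\Leftrightarrow(6)$ is immediate from Proposition~\ref{P:configurationPolynomial}, which writes
\[
\Psi_W(A)=\sum_{\substack{F\subset E\\ \abs{F}=\dim W}} \Plucker_F(W)^2\prod_{f\in F} A_f,
\]
so a variable $A_e$ with $e\in E-H$ fails to appear precisely when every Pl\"{u}cker coordinate indexed by a set containing $e$ vanishes. The equivalence $(6)\Leftrightarrow(7)$ is a triviality: $\Psi_W$ is linear separately in each $A_e$ (again by the Pl\"{u}cker formula, each coordinate appears to degree at most one in any monomial), so $\partial_e\Psi_W=0$ if and only if $A_e$ is absent.

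To close the cycle I would show $(7)\Rightarrow(1)$ by a one-edge-at-a-time argument. For each fixed $e\in E-H$, condition (7) says $\partial_e\Psi_W(A)=0$, and then Lemma~\ref{L:trivialPartialOne} applied to this single edge gives $W\subseteq K^{E-e}$. Taking the intersection of these containments over all $e\in E-H$ yields
\[
W\subseteq \bigcap_{e\in E-H} K^{E-e} = K^H,
\]
which is exactly condition~(1). Alternatively, one can argue directly at the level of Pl\"{u}cker coordinates: condition (5) says $\det W\in\mathbb{P}(\bigwedge^{\dim W} K^H)$ as a subvariety of $\mathbb{P}(\bigwedge^{\dim W} K^E)$ cut out by the vanishing of exactly those Pl\"{u}cker coordinates indexed by $F$ meeting $E-H$, and this forces $W\subseteq K^H$.

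The only step with any subtlety is $(7)\Rightarrow(1)$, and even there the reduction to Lemma~\ref{L:trivialPartialOne} makes it routine once one notices that the conditions in~(7) decouple across the edges of $E-H$. The reason no inductive argument on $|E-H|$ is truly needed is that each $e\in E-H$ separately gives a containment $W\subseteq K^{E-e}$, and based subspaces intersect on indices in the obvious way: $K^{E-e_1}\cap K^{E-e_2}=K^{E-\{e_1,e_2\}}$, and so on. This makes the generalization from the simple case to the general case essentially bookkeeping.
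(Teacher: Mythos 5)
Your proof is correct and takes essentially the same approach as the paper: close the cycle of equivalences by routing the substantive implication $(7)\Rightarrow(1)$ through Lemma~\ref{L:trivialPartialOne}, with the remaining implications following directly from the definitions and Proposition~\ref{P:configurationPolynomial}. The paper's one-line proof invokes induction on $\abs{E-H}$, whereas you observe that the identity $K^H=\bigcap_{e\in E-H}K^{E-e}$ lets one apply the one-edge lemma independently to each $e\in E-H$ and intersect, which is a minor but pleasant streamlining of the same underlying reduction.
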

\begin{proof}
This follows by induction on the size of $E-H$ from Lemma~\ref{L:trivialPartialOne}.
\end{proof}
The case when $W$ is not contained in $ K^{H}$ is described by formally differentiating $\Psi_W(A)$.
Again, we start with $H=E-e$ and proceed by induction. 
First, we state a well-known fact from linear algebra that we use several times:
\begin{lemma}\label{L:intersectionDimensionBound}
Suppose $U_1$ and $U_2$ are subspaces of a vector space $V$. Then
\[
\dim U_1\cap U_2 \geq \dim U_1 -\codim U_2,
\]
which can be summarized as the dimension of a subspace may drop by at most $k$ when intersected with a codimension $k$ subspace.
\end{lemma}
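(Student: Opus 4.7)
The plan is to use the standard dimension formula for sums and intersections of subspaces, which is one of the most basic tools in linear algebra. Specifically, I would invoke the identity
\[
\dim(U_1+U_2)=\dim U_1+\dim U_2-\dim(U_1\cap U_2),
\]
which holds for any two subspaces of a finite-dimensional vector space and follows from choosing a basis of $U_1\cap U_2$ and extending it to bases of $U_1$ and $U_2$ separately. Since $U_1+U_2$ is a subspace of $V$, we have $\dim(U_1+U_2)\leq\dim V$, so rearranging yields
\[
\dim(U_1\cap U_2)\geq \dim U_1+\dim U_2-\dim V = \dim U_1-\codim U_2,
\]
which is precisely the claim.

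An equivalent, possibly cleaner, approach is to consider the linear map
\[
\varphi\colon U_1 \hookrightarrow V \twoheadrightarrow V/U_2,
\]
the composition of the inclusion of $U_1$ and the quotient map. Its kernel is exactly $U_1\cap U_2$, and its image is a subspace of $V/U_2$, which has dimension $\codim U_2$. Applying the rank-nullity theorem to $\varphi$ gives
\[
\dim U_1=\dim(U_1\cap U_2)+\dim\Image\varphi\leq \dim(U_1\cap U_2)+\codim U_2,
\]
and rearranging again yields the desired inequality.

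There is no substantive obstacle here; both proofs are one-line consequences of well-known facts. I would present whichever version reads more naturally in the flow of the paper, and since Section~\ref{S:bilinearForms} already uses radicals and kernels of linear maps in the style of the second approach, I would lean toward the rank-nullity argument for consistency. The only choice to make is whether to invoke the sum-intersection formula as a known identity or to derive it in passing; since the lemma is stated as well-known, citing it without proof is appropriate.
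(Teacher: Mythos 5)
Your proof is correct; in fact you give two valid arguments, and both are standard. Note, however, that the paper itself does not prove this lemma: it is explicitly introduced with the phrase ``we state a well-known fact from linear algebra,'' and no proof follows. So there is nothing in the paper to compare against. Your own closing remark already anticipates this, and is right: citing the fact without proof is the appropriate choice here, and if one insists on a proof, the rank--nullity formulation via $U_1 \hookrightarrow V \twoheadrightarrow V/U_2$ fits the paper's style (kernels of linear maps, radicals as kernels of $\ell_B$) better than the sum--intersection identity.
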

\begin{lemma}[Nontrivial Restriction, Simple Case] \label{L:nontrivialPartialOne}
Suppose $W$ is a configuration in $K^E$, and let $e$ be an element of $E$ for which $W^{E-e}$ is nonzero.
The following conditions are equivalent:
\begin{enumerate}
\item $W\nsubseteq K^{E-e}$,
\item $W^{E-e}$ is a hyperplane in $W$, and there is a nonzero constant $C$ such that for all $F\subset E-e$ with
$\abs{F}=\dim W^{E-e}$ 
\[\Plucker_F\left(W^{E-e}\right) = C \Plucker_{F\cup e}(W),\]
\item  $\langle\partial_e \Psi_{W}(A) \rangle = \langle \Psi_{W^{E-e}}(A) \rangle$
as ideals in the polynomial ring $K[A]$.
\end{enumerate}
\end{lemma}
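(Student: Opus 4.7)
The plan is to prove the cycle $(1) \Rightarrow (2) \Rightarrow (3) \Rightarrow (1)$. The content of the lemma is concentrated in $(1) \Rightarrow (2)$, which is a Plücker computation via an adapted basis; the other two implications then fall out of formula~\eqref{eq:onepartial} and Lemma~\ref{L:trivialPartialOne}, respectively.

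For $(1) \Rightarrow (2)$: since $W \nsubseteq K^{E-e}$, the coordinate functional $X_e$ restricts to a nonzero linear functional on $W$, whose kernel is exactly $W^{E-e}$, so $W^{E-e}$ is a hyperplane in $W$. To extract the Plücker identity, I would choose a basis $\{w_1,\dots,w_{\ell-1}\}$ of $W^{E-e}$ (with $\ell=\dim W$) and extend to a basis $\beta=\{w_1,\dots,w_\ell\}$ of $W$ by picking any $w_\ell \in W$ with $X_e(w_\ell)\neq 0$. For any $F\subseteq E-e$ with $\abs{F} = \ell-1$, the matrix of $\pi_{F\cup e}|_W$ in the basis $\beta$ then has upper-left $(\ell-1)\times(\ell-1)$ block equal to the matrix of $\pi_F|_{W^{E-e}}$ in $\{w_1,\dots,w_{\ell-1}\}$ and last row $(0,\dots,0,X_e(w_\ell))$. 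Laplace expansion along the last row gives
\[
\Plucker_{F\cup e}(W) \;=\; \pm\, X_e(w_\ell)\cdot \Plucker_{F}(W^{E-e}),
\]
so $C=\pm X_e(w_\ell)^{-1}$ is a single constant independent of $F$. The sign and basis ambiguities are absorbed by the fact that Plücker coordinates and configuration polynomials are only defined up to a global nonzero scalar.

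For $(2) \Rightarrow (3)$: formula~\eqref{eq:onepartial} expresses $\partial_e \Psi_W(A)$ as a sum over $\tilde F\subseteq E-e$ of size $\dim W-1$; by (2) this equals $\dim W^{E-e}$, so substituting $\Plucker_{\tilde F\cup e}(W) = C^{-1}\Plucker_{\tilde F}(W^{E-e})$ yields $\partial_e \Psi_W(A) = C^{-2}\Psi_{W^{E-e}}(A)$, which generates the same principal ideal as $\Psi_{W^{E-e}}(A)$. For $(3) \Rightarrow (1)$, I argue the contrapositive: if $W\subseteq K^{E-e}$, then Lemma~\ref{L:trivialPartialOne} gives $\partial_e \Psi_W = 0$, whereas $\Psi_{W^{E-e}}$ is nonzero because $W^{E-e}\neq 0$ by hypothesis (see the remark following Proposition~\ref{P:configurationPolynomial}); so the two principal ideals cannot coincide.

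The main obstacle is the adapted-basis computation in $(1) \Rightarrow (2)$: one must fix the basis $\{w_1,\dots,w_{\ell-1}\}$ of $W^{E-e}$ and the extending vector $w_\ell$ once and for all, independently of the subset $F$, in order for the constant $C$ to be global. A naive argument that picks bases for $W$ and $W^{E-e}$ independently, or reselects $w_\ell$ for each $F$, produces $F$-dependent constants and loses the proportionality of Plücker vectors that (2) demands.
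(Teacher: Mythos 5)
Your proof is correct and takes essentially the same route as the paper: identify $W^{E-e}$ as the kernel of $X_e|_W$ to show it is a hyperplane, extend a basis of $W^{E-e}$ to one of $W$ so the $e$-row of $\pi_{F\cup e}|_W$ has a single nonzero entry $X_e(w_\ell)$, Laplace-expand, feed the resulting proportionality into Equation~\eqref{eq:onepartial} for $(2)\Rightarrow(3)$, and close the cycle with a contrapositive via Lemma~\ref{L:trivialPartialOne} and nonvanishing of $\Psi_{W^{E-e}}$. Your version is in fact a touch more careful on two small points: you correctly observe that $C=\pm X_e(w_\ell)^{-1}$ (the paper asserts $C=X_e(w_m)$, which conflates $\Plucker_{F\cup e}(W)$ with $\Plucker_F(W)$ and inverts the constant), and you flag the $\pm$ from the ordering of $F\cup e$, which the paper suppresses by tacitly placing the $e$-row last.
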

\begin{proof}
(1) $\Rightarrow$ (2): By Lemma~\ref{L:intersectionDimensionBound} and the assumption that $W\nsubseteq K^{E-e}$, 
the dimension of $W$ drops by $1$ when
intersected with the hyperplane $K^{E-e}$.

The projections 
\[
\pi_{F}|_{W^{E-e}}: W^{E-e} \to K^F
\]
have factorizations
\[
\pi_F|_{W^{E-e}}: W^{E-e} \hookrightarrow W \xrightarrow{\pi_{F\cup e}|_W} K^{F\cup e} \to K^F.
\]
Extend a basis $\{ w_1,\dots, w_{m-1}\}$ of $W^{E-e}$ to a basis for $W$ by adding an element $w_m$.
The matrix representing the map from $W$ to $K^{F\cup e}$ in this basis for $W$ and the
canonical basis of $K^{F\cup e}$ is
\begin{align*}
&\begin{pmatrix}
X_{f_1}(w_1) & X_{f_1}(w_2) & \dots & X_{f_1}(w_m) \\
X_{f_2}(w_1) & X_{f_2}(w_2) & \dots & X_{f_2}(w_m) \\
\vdots & \vdots & \ddots & \vdots \\
X_{e}(w_1) & X_{e}(w_2) & \dots &X_e(w_m)
\end{pmatrix}\\
&\qquad \qquad =
\begin{pmatrix}
X_{f_1}(w_1) & X_{f_1}(w_2) & \dots & X_{f_1}(w_m) \\
X_{f_2}(w_1) & X_{f_2}(w_2) & \dots & X_{f_2}(w_m) \\
\vdots & \vdots & \ddots & \vdots \\
0 & 0 & \dots &X_e(w_m)
\end{pmatrix}.
\end{align*}
Here $f_1,\dots,f_{m-1}$ denote the elements of $F$.
Expanding the above determinant in the last row, 
whose only nonzero element is $X_e(w_m)$, gives
\begin{align*}
 \Plucker_F(W) &=(-1)^{2m} X_e(w_m) \Plucker_F\left(W^{E-e}\right) \\
 & = X_e(w_m) \Plucker_F\left(W^{E-e}\right).
\end{align*}
In particular, $C= X_e(w_m)$ satisfies the conclusion of (2). The choice of $w_m$ can change the overall
constant, but it is independent of the subsets $F$.

(2) $\Rightarrow$ (3): As noted in Equation~\eqref{eq:onepartial},
\[
\partial_e \Psi_W(A) = 
\sum_{\substack{\tilde{F}\subset E-e \\ \abs{\tilde{F}}=\dim W-1}} \Plucker_{\tilde{F}\cup e}(W)^2 
\prod_{f\in \tilde{F}} A_f.
\]
By the assumption (2), the coefficients can be simplified to  
\[
\partial_e \Psi_W(A) = 
C^2 \sum_{\substack{\tilde{F}\subset E-e \\ \abs{\tilde{F}}=\dim W-1}} \Plucker_{\tilde{F}}\left(W^{E-e}\right)^2 
\prod_{f\in \tilde{F}} A_f.
\]
That is,
\[
\partial_e \Psi_W(A) = C^2 \Psi_{W^{E-e}}(A).
\]
The ambiguity of the overall constant is removed by considering the ideals.

(3) $\Rightarrow$ (1): By assumption there is a unit $C$ such that $\partial_e \Psi_W(A) = C \Psi_{W^{E-e}}(A)$. 
By the assumption that $W^{E-e}$ is nonzero, one of its Pl\"{u}cker coordinates is nonzero, so $\partial_e\Psi_W(A) \neq 0$ by Proposition~\ref{P:configurationPolynomial}.
If $W$ were contained in $K^{E-e}$, then $\partial_e\Psi_W(A)= 0$ by Lemma~\ref{L:trivialPartialOne}, which
would be a contradiction.
Thus, $W\nsubseteq K^{E-e}$.
\end{proof}
Again, we generalize to larger subsets of $E$ and higher derivatives.
For  $F=\{e_1,\dots,e_k\}\subseteq E$,
let $\partial_F$ denote $\partial_{e_1}\cdots \partial_{e_k}$.
\begin{lemma}[Nontrivial Restriction] \label{L:nontrivialPartialMore}
Let $W$ be a configuration in $K^E$.
The following conditions are equivalent for a subset
$H\varsubsetneq  E$ assuming $W^H$ is nonzero:
\begin{enumerate}
\item for all sets $H'$ with 
$
H\varsubsetneq H' \subseteq E,
$
there exists $e\in H'-H$ such that
\[
W^{H'}\nsubseteq K^{H'-e},
\]
\item $W^H$ is a codimension $\abs{E-H}$ subspace of $W$, and
there is a nonzero constant $C$ such that for all $F\subset H$ with
$\abs{F}=\dim W^{H}$, 
\[\Plucker_F\left(W^{H}\right) = C \Plucker_{F\cup (E-H)}\left(W\right),
\]
\item $\langle \partial_{(E-H)} \Psi_{W}(A) \rangle = \langle \Psi_{W^{H}}(A)\rangle $.
\end{enumerate}
If these conditions are not satisfied for $H$, then $\Psi_{W^H}(A) = \Psi_{W^{H'}}(A)$
for each $H'$ for which condition (1) fails and $\partial_{(E-H)}\Psi_W(A) = 0$.
\end{lemma}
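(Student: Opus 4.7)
My plan is to reduce the three conditions and the failure clause to the single dimension identity $\dim W^H = \dim W - \abs{E-H}$. The key preliminary observation is that for any $H''$ with $H\subseteq H''\subseteq E$, the inclusion $W^{H''}\subseteq K^{H''-e}$ holding for every $e\in H''-H$ is equivalent to $W^{H''}\subseteq \bigcap_{e\in H''-H} K^{H''-e}=K^H$; since $W^H\subseteq W^{H''}$ already, this forces $W^{H''}=W^H$. So condition $(1)$ fails at a specific $H''$ precisely when $W^{H''}=W^H$, which makes the identity $\Psi_{W^{H'}}=\Psi_{W^H}$ in the failure clause immediate.

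I would then prove $(1)\Leftrightarrow\dim W^H=\dim W-\abs{E-H}$. Assuming $(1)$, build a chain $E=H_0\supsetneq H_1\supsetneq\cdots\supsetneq H_k=H$ by applying $(1)$ at each $H_{i-1}$ to obtain $e_i\in H_{i-1}-H$ with $W^{H_{i-1}}\nsubseteq K^{H_{i-1}-e_i}$, and setting $H_i=H_{i-1}-e_i$; each step reduces the dimension by one, giving $\dim W^H=\dim W-k$. Conversely, the dimension identity combined with Lemma~\ref{L:intersectionDimensionBound} yields $\dim W^{H''}>\dim W^H$ for every $H''\supsetneq H$, so $W^{H''}\supsetneq W^H$, and any $w\in W^{H''}\setminus W^H$ has a nonzero coordinate at some $e''\in H''-H$, producing the witness required by $(1)$. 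Granted $(1)$, I would derive the Pl\"ucker identity of $(2)$ by extending a basis $\{w_1,\ldots,w_{m-k}\}$ of $W^H$ (with $m=\dim W$ and $k=\abs{E-H}$) by vectors $w_{m-k+1},\ldots,w_m$ to a basis of $W$: the matrix of $\pi_{F\cup(E-H)}|_W$ in this basis is block lower triangular because the lower-left $k\times(m-k)$ block vanishes (since $w_1,\ldots,w_{m-k}\in K^H$ have zero $(E-H)$-components), so its determinant factors as $\Plucker_F(W^H)\cdot D$ with $D$ independent of $F$, and a short independence argument shows $D\neq 0$ (any null combination of $w_{m-k+1},\ldots,w_m$ would lie in $W^H$, contradicting basis extension). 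Setting $C=D^{-1}$ produces $(2)$.

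For $(2)\Rightarrow(3)$, I apply Proposition~\ref{P:configurationPolynomial}: only terms with $E-H\subseteq F$ survive the multiple derivative, so
\[
\partial_{(E-H)}\Psi_W(A)=\sum_{\substack{\tilde F\subset H \\ \abs{\tilde F}=m-k}}\Plucker_{\tilde F\cup(E-H)}(W)^2\prod_{f\in\tilde F}A_f,
\]
and the Pl\"ucker identity collapses this to $C^{-2}\Psi_{W^H}(A)$. For $(3)\Rightarrow(1)$, comparing degrees in $\langle\partial_{(E-H)}\Psi_W\rangle=\langle\Psi_{W^H}\rangle$ (both nonzero since $W^H\neq 0$) yields $\dim W-\abs{E-H}=\dim W^H$. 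For the failure clause, if $(1)$ fails then $\dim W^H>\dim W-\abs{E-H}$, so for every $\tilde F\subset H$ with $\abs{\tilde F}=\dim W-\abs{E-H}$, Lemma~\ref{L:intersectionDimensionBound} gives $\dim W^{H-\tilde F}\geq\dim W^H-\abs{\tilde F}\geq 1$; thus $\Ker\pi_{\tilde F\cup(E-H)}|_W=W^{H-\tilde F}\neq 0$ forces $\Plucker_{\tilde F\cup(E-H)}(W)=0$, and hence $\partial_{(E-H)}\Psi_W=0$. I expect the main obstacle to be pinning down the equivalence of the existential condition $(1)$ with the dimension identity cleanly; once that reformulation is in place, the remainder is routine determinantal bookkeeping plus Proposition~\ref{P:configurationPolynomial}, without any need for induction on $\abs{E-H}$.
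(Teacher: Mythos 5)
Your proof is correct and takes a genuinely different route from the paper. The paper proves the lemma by induction on $\abs{E-H}$, reducing to the one-variable case (Lemma~\ref{L:nontrivialPartialOne}) at each step; the inductive argument is awkward because the equivalence of the three conditions and the failure clause must be carried simultaneously, and the maximal ``trivially restricted'' subset $\tilde H$ must be identified and handled separately. You instead isolate a clean intermediate characterization --- condition (1) is equivalent to the dimension identity $\dim W^H = \dim W - \abs{E-H}$ --- proved by a chain of hyperplanes in one direction and by Lemma~\ref{L:intersectionDimensionBound} plus a coordinate argument in the other. This replaces the paper's induction on $\abs{E-H}$ entirely. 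Your derivation of (2) from (1) via a block-triangular matrix (extend a basis of $W^H$ to $W$; the $(E-H)\times W^H$ block vanishes, factoring the Plücker minor as $\Plucker_F(W^H)\cdot D$ with $D$ independent of $F$ and nonzero by basis independence) handles all $\abs{E-H}$ at once, in contrast to the paper's repeated one-row expansion. Your $(2)\Rightarrow(3)$ step is the same as the paper's, your $(3)\Rightarrow(1)$ by degree count is more direct than the paper's contradiction via the failure clause, and your treatment of the failure clause --- showing $\Plucker_{\tilde F\cup(E-H)}(W)=0$ for all relevant $\tilde F$ because $W^{H-\tilde F}\ne 0$ --- is cleaner than the paper's chase through $\tilde H$. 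The tradeoff: the paper's inductive structure reuses the already-proven Lemma~\ref{L:nontrivialPartialOne} and so is more modular, but your version is shorter, avoids the delicate double-purpose induction, and surfaces the dimension identity as the real content of condition (1), which is illuminating in its own right. (Both your argument and the paper's suppress the $\pm 1$ sign dependence of Plücker coordinates on the ordering of $E$; this washes out once the coordinates are squared, so it is harmless in either account.)
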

\begin{proof}
The proof proceeds by induction on the size of $\abs{E-H}$.
At each stage of the induction, first prove that if $H$ does
not satisfy (1), then 
$\Psi_{W^H}(A) = \Psi_{W^{H'}}(A)$
for each $H'$ for which condition (1) fails and \[\partial_{(E-H)}\Psi_W(A) = 0.\]
Then proceed to prove that 
\[
\text{(1)} \Rightarrow \text{(2)} \Rightarrow \text{(3)} \Rightarrow \text{(1)}.
\]
The reason for this complicated induction is twofold. First, the inductive hypothesis
for the equivalence of the three conditions is used to prove what
happens when (1) fails. Second, the situation in which (1) fails is used to prove
(3) $\Rightarrow$ (1) by contradiction.

First consider the case $\abs{E-H}=1$.
If $H$ does not satisfy (1), then Lemma~\ref{L:trivialPartialOne}
implies that $\Psi_{W^H}(A)=\Psi_W(A)$ and $\partial_{(E-H)}\Psi_W(A)= 0$.
The equivalence of the three conditions is
the content of Lemma~\ref{L:nontrivialPartialOne}.
Now consider $H\subseteq  E$ with $\abs{E-H}>1$. 

If $H$ does not satisfy (1), 
there are sets $H'$ such that
\[
H\varsubsetneq H' \subseteq E
\]
and for all elements $e\in H'-H$, 
\[
W^{H'}\subseteq K^{H'-e}.
\]
In particular, $W^{H'} \subseteq K^H$ for such subsets, and thus
$W^H=W^{H'}$ by Lemma~\ref{L:trivialPartialMore}.

Let $\tilde{H}$ be  a maximal such subset.
If $\tilde{H}=E$, then $\partial_{E-H} \Psi_W= 0$ by
Lemma~\ref{L:trivialPartialMore}.
If $\tilde{H} \varsubsetneq E$, then by maximality, 
for every set $H'$ such that $\tilde{H}\varsubsetneq H' \subseteq E$, 
there is an element $e'\in H'-H$ such that $W^{H'} \nsubseteq K^{H'-e'}$.
There must be such an element $e'$ in the smaller set $H'-\tilde{H}$,
otherwise Lemma~\ref{L:trivialPartialMore} implies that $W^{H'}=W^{\tilde{H}}$,
and thus 
\[
W^{H'}= W^{\tilde{H}}=W^{H} \subseteq K^{H'-e'}.
\]
Therefore, the inductive hypothesis for the equivalence of the three conditions applies to $\tilde{H}$, 
so there is a nonzero constant $C$ such that 
\[
\partial_{(E-\tilde{H})}\Psi_W(A) = C\Psi_{W^{\tilde{H}}}(A).
\]
For every $e\in \tilde{H}-H$,
\[
\partial_e \Psi_{W^{\tilde{H}}}(A) = 0
\]
by Lemma~\ref{L:trivialPartialMore}.
Thus, 
\begin{align*}
\partial_{(E-H)}\Psi_W(A) & = \partial_{(\tilde{H}-H)}\partial_{(E-\tilde{H})}\Psi_W(A) \\
													& = C\partial_{(\tilde{H}-H)}\Psi_{W^{\tilde{H}}}(A) \\
													& = 0.
\end{align*}

(1) $\Rightarrow$ (2): 
Assuming the condition in (1) holds for $H$, taking $H'=E$, there is an element $\tilde{e}\in E-H$ such that
$W \nsubseteq K^{E-\tilde{e}}$.
By Lemma~\ref{L:nontrivialPartialOne}, $W^{E-\tilde{e}}$ is a codimension $1$ subspace of $W$
and there is a nonzero constant $\tilde{C}$ such that 
\[
\Plucker_{\tilde{F}}\left( W^{E-\tilde{e}} \right) = \tilde{C} \Plucker_{\tilde{F}\cup\tilde{e}}(W)
\]
for all $\tilde{F}\subset E-\tilde{e}$ with $\abs{\tilde{F}} = \dim W^{E-\tilde{e}}$.

The condition (1) applies to all $H'$ with $H\varsubsetneq H' \subseteq E-\tilde{e}$, 
and 
\[
\abs{E-\tilde{e}-H} < \abs{E-H},
\] 
so by the inductive hypothesis applied to the
configuration $W^{E-\tilde{e}}$ in $K^{E-\tilde{e}}$, 
$W^H$ is a codimension $\abs{E-\tilde{e}-H}$ subspace of $W^{E-\tilde{e}-H}$ and there
is a nonzero constant $C'$ such that
\[
\Plucker_F\left(W^H\right) = C' \Plucker_{F\cup(E-\tilde{e}-H)}\left( W^{E-\tilde{e}} \right)
\]
for all $F\subset H$ with $\abs{F} = \dim W^H$.
Therefore, $W^H$ is a codimension $\abs{E-H}$ subspace of $W$, and there is a nonzero 
constant $C=C'\tilde{C}$ such that
\begin{align*}
\Plucker_F\left(W^H\right) &= C' \Plucker_{F\cup(E-\tilde{e}-H)}\left( W^{E-\tilde{e}} \right)\\
&= C'\tilde{C}\Plucker_{F\cup(E-H)}(W)
\end{align*}
for all $F\subset H$ with $\abs{F} = \dim W^H$.
%

(2) $\Rightarrow$ (3):
In general, using the bijection between subsets of $E$ containing $E-H$ of size $\dim W$ and subsets of $H$ of size $\dim W-\abs{E-H}$,
\begin{align*}
\partial_{(E-H)} \Psi_{W} (A) &= \sum_{\substack{ F\subseteq E\\ \abs{F}=\dim W \\ E-H\subseteq F }} 
\Plucker_F\left(W\right)^2 \prod_{f\in F-(E-H)} A_f\\
&= \sum_{\substack{ \tilde{F}\subseteq H \\ \abs{\tilde{F}}=\dim W-\abs{E-H}  }} 
\Plucker_{\tilde{F}\cup(E-H)}\left(W\right)^2 \prod_{f\in \tilde{F}} A_f.
\end{align*}
By assumption (2), the Pl\"{u}cker coordinates and $\dim W - \abs{E-H}$ become
\begin{align*}
\partial_{(E-H)} \Psi_{W} (A) &= C^2\sum_{\substack{ \tilde{F}\subseteq H \\ \abs{\tilde{F}}=\dim W^H  }} 
\Plucker_{\tilde{F}}\left(W^H\right)^2 \prod_{f\in \tilde{F}} A_f\\
&=C^2 \Psi_{W^H}(A).
\end{align*}
Therefore, the ideals are the same.

(3) $\Rightarrow$ (1):
By assumption, $W^H$ is nonzero, so one of its Pl\"{u}cker coordinates is nonzero, and
 Proposition~\ref{P:configurationPolynomial} shows
that $\Psi_{W^H}(A)$ is not identically zero.

We prove by contradiction, so we will assume (1) fails and show that this implies that (3) fails. 
In fact, when (1) fails, we have already shown that 
\[
\partial_{(E-H)}\Psi_{W}(A)= 0,
\]
which contradicts condition (3) that there is a unit $C$ such that 
\[
\partial_{(E-H)}\Psi_{W}(A) = C\Psi_{W^H}(A)\neq 0.
\]
\end{proof}
\begin{remark}
When Lemmas~\ref{L:nontrivialPartialOne} and~\ref{L:nontrivialPartialMore}
are applied to graph polynomials,
there is no arbitrary scaling of the configuration polynomials 
because an integral basis fixes the coefficients to be $1$ or $0$.
In particular, the arbitrary nonzero constants $C$ may be replaced by $1$,
and the equality of ideals may be replaced by equality of polynomials.

Moreover, if $G_1$ is a subgraph of $G$, then a boundary map for $G$ 
restricts to a boundary map for $G_1$. 
Therefore, $H_1(G_1)$ is the restriction of $H_1(G)$ to $K^{E(G_1)}$.
Whether $\Psi_{G_1}(A)$ can be computed by $\partial_{E(G-G_1)}\Psi_G(A)$
depends on whether $E(G-G_1)$ is contained in the complement of
a spanning forest of $G$.

Similarly for the second graph polynomial, $H_1(G_1,p)$ is the restriction of 
$H_1(G,p)$ to $K^{E(G_1)}$ if $p$ is a momentum on $G_1$.
If $p$ is not a momentum on $G_1$, then the restriction of $H_1(G,p)$
to $K^{E(G_1)}$ is $H_1(G_1)$.
Whether the restriction can be computed by differentiating depends
on whether $E(G-G_1)$ is contained in a cut set.
\end{remark}
The following corollary summarizes the preceding results for
the applications to follow.
\begin{corollary}[Restrictions of Configuration Polynomials] \label{C:partialsOfConfigurationPolynomials}
Let $W$ be a nonzero configuration of dimension $m$ in $K^E$.
For every integer $k$, $1\leq k \leq m-1$, the following ideals in $K[A]$ are the same
\[
\Big\langle \partial_{(E-H)}\Psi_W(A) \, \Big|\, \abs{E-H}\leq k \Big\rangle =
\Big\langle \Psi_{W^H}(A) \, \Big|\, \abs{E-H}\leq k    \Big\rangle.
\]
\end{corollary}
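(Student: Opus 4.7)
The plan is to apply Lemma~\ref{L:nontrivialPartialMore} termwise to every subset $H\subseteq E$ with $\abs{E-H}\leq k$ and verify that the generator indexed by $H$ on each side lies in the opposite ideal. A preliminary observation: since $\abs{E-H}\leq k\leq m-1$, Lemma~\ref{L:intersectionDimensionBound} gives $\dim W^H \geq m-\abs{E-H}\geq 1$, so $W^H$ is nonzero and $\Psi_{W^H}(A)$ is well-defined (and not identically zero) for every $H$ in the indexing range. This means every generator listed on the right-hand side makes sense.

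Fix an index $H$ with $\abs{E-H}\leq k$ and split into two cases. If condition (1) of Lemma~\ref{L:nontrivialPartialMore} holds at $H$, the equivalent condition (3) of that lemma gives $\langle\partial_{(E-H)}\Psi_W(A)\rangle = \langle\Psi_{W^H}(A)\rangle$, so the generators at index $H$ already match up to units and each lies in the opposite ideal. If condition (1) fails at $H$, the final sentence of Lemma~\ref{L:nontrivialPartialMore} yields $\partial_{(E-H)}\Psi_W(A)=0$, so this index contributes nothing to the left ideal, and it remains to exhibit $\Psi_{W^H}(A)$ as a generator of the left ideal at some other index.

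For the failing case I would reuse the maximality construction from the proof of Lemma~\ref{L:nontrivialPartialMore}: pick a maximal $\tilde{H}$ with $H\subseteq \tilde{H}\subseteq E$ and $W^{\tilde{H}}=W^H$, so that $\Psi_{W^H}(A)=\Psi_{W^{\tilde{H}}}(A)$ and $\abs{E-\tilde{H}}\leq \abs{E-H}\leq k$, keeping $\tilde{H}$ within the indexing range. If $\tilde{H}=E$ then $W^H=W$ and $\Psi_{W^H}(A)=\Psi_W(A)$, which is the generator of the left ideal at the index $H'=E$ since $\partial_\emptyset\Psi_W = \Psi_W$. Otherwise, the same maximality argument used inside the proof of Lemma~\ref{L:nontrivialPartialMore} forces condition (1) to hold at $\tilde{H}$, so $\langle\partial_{(E-\tilde{H})}\Psi_W(A)\rangle=\langle\Psi_{W^{\tilde{H}}}(A)\rangle=\langle\Psi_{W^H}(A)\rangle$, and $\Psi_{W^H}(A)$ lies in the left ideal as the generator at index $\tilde{H}$.

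The argument is essentially a bookkeeping exercise on top of Lemma~\ref{L:nontrivialPartialMore}, so I do not expect a serious obstacle. The only mildly delicate point is confirming that replacing $H$ by the maximal $\tilde{H}$ keeps us within the indexing range, which it does automatically from $\tilde{H}\supseteq H$; and noting that the $\tilde{H}=E$ case requires the corollary's bound $k\geq 1$ to guarantee that $H'=E$ is a legitimate index.
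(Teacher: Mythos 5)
Your proposal is correct and takes essentially the same approach as the paper: both split on whether condition (1) of Lemma~\ref{L:nontrivialPartialMore} holds at a given index $H$, invoke the equivalence with condition (3) when it does, and handle the failing case by exhibiting $\Psi_{W^H}$ as a generator attached to a strictly larger subset. The paper packages the last step as an induction on $k$ (comparing $\mathcal{A}_{k-1}$ and $\mathcal{B}_{k-1}$), whereas you unroll it by passing directly to the maximal $\tilde{H}\supseteq H$ with $W^{\tilde{H}}=W^H$ and re-running the maximality check from the lemma's proof to see that condition (1) holds there; this is a presentational difference only, and your version of the argument is sound.
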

\begin{proof}
First note that the condition  $ k \leq m-1$
implies that for $\abs{E-H}\leq k$
\begin{align}
\dim W^H &= \dim W\cap K^H \notag \\
& \geq m-k \tag{Lemma~\ref{L:intersectionDimensionBound}}\\
& \geq m- (m-1)\notag \\
& =1. \notag
\end{align}
In particular, the configurations $W^H$ are nonzero, 
which is required for citing Lemma~\ref{L:nontrivialPartialOne} below. 

Proceed by induction.
Let
\[
\mathcal{A}_k=\Big\langle \Psi_{W^{H}}(A) \, \Big|\, \abs{E-H}\leq k    \Big\rangle
\]
and
\[
\mathcal{B}_k=\Big\langle \partial_{(E-H)}\Psi_W(A) \, \Big|\, \abs{E-H}\leq k    \Big\rangle.
\]
Consider first the case $k=1$, and suppose $H$ is a subset for which $\abs{E-H}=1$.
If condition (1) of Lemma~\ref{L:nontrivialPartialOne} fails for $H$,
then 
\[ \partial_{(E-H)} \Psi_{W}(A)  = 0 \in \mathcal{A}_1 \]
and
\[ \Psi_{W^{H}}(A) =  \Psi_{W}(A).\]
Note that $\Psi_W(A)$ is an element of $\mathcal{B}_1$ by Euler's formula for homogeneous polynomials
or by defining $\partial_{\emptyset}\Psi_W(A) = \Psi_W(A)$.
If condition (1) is satisfied by $H$, then Lemma~\ref{L:nontrivialPartialOne} 
shows that
\[
\langle \partial_{(E-H)} \Psi_{W}(A) \rangle = \langle \Psi_{W^{H}}(A)\rangle.
\]
Summing these ideals over all $H$ for which $\abs{E-H}\leq 1$ gives $\mathcal{A}_1=\mathcal{B}_1$.

Suppose $\mathcal{A}_{k-1}= \mathcal{B}_{k-1}$, 
and consider an $H$ for which $\abs{E-H} = k$.  
If $H$ satisfies condition (1) of Lemma~\ref{L:nontrivialPartialMore},
then 
$\langle \partial_{(E-H)} \Psi_{W}(A) \rangle = \langle \Psi_{W^{H}}(A)\rangle$ by that lemma.
If $H$ does not satisfy condition (1) of Lemma~\ref{L:nontrivialPartialMore},
then 
\[ \partial_{(E-H)} \Psi_{W}(A)  = 0 \in \mathcal{A}_k, \]
and there is an $H'\varsupsetneq H$ for which
\[ \Psi_{W^{H}}(A) =  \Psi_{W^{H'}}(A) \in \mathcal{B}_{k-1}\subseteq \mathcal{B}_k.\]
Therefore, adding these generators to $\mathcal{A}_{k-1}$ and $\mathcal{B}_{k-1}$ for all $H$ for which $\abs{E-H}=k$ proves
$\mathcal{A}_{k}=\mathcal{B}_{k}$.
\end{proof}
\begin{remark}
If $k\geq m$ in Corollary~\ref{C:partialsOfConfigurationPolynomials}, then  the ideal
\[
\mathcal{A}_k=\Big\langle \Psi_{W^{H}}(A) \, \Big|\, \abs{E-H}\leq k    \Big\rangle
\]
may  not be defined because $W^{H}$ may be zero and we have no convention for the configuration polynomial of the zero configuration.
Note, however, that there may be $H\subseteq E$, $\abs{E-H}\geq m$ for which $W^{H}$ is nonzero.
For example, if $W=H_1(G)$ is a graph configuration, then the restriction $W^{H}$ is
the configuration for the graph with edges $H$ and the same vertices~\cite{BEK}.
If $\abs{E-H}\geq m$, the graph with edges $H$ will have nonzero first homology
as long as $H$ is not a forest.

For $k<m$, the ideals form an ascending chain
\[
\mathcal{A}_1\subseteq \cdots \subseteq \mathcal{A}_{m-1}.
\]
If the convention for the zero configuration were that its configuration polynomial was a unit, then
$\mathcal{A}_m=K[A]$, which completes the chain.
On the other hand, the ideals
\[
\mathcal{B}_k =\Big\langle \partial_{(E-H)}\Psi_W(A) \, \Big|\, \abs{E-H}\leq k \Big\rangle
\]
are defined for all $k$. 
In fact, $\mathcal{B}_k=K[A]$ for $k\geq m$ because $\partial_{(E-H)} \Psi_W(A) = \Plucker_{(E-H)}(W)^2$
if $\abs{E-H}=m$ by Proposition~\ref{P:configurationPolynomial}, 
and at least one of these Pl\"{u}cker coordinates is nonzero.
\end{remark}

\subsection{Singularity-Rank Correspondence}\label{S:singularities}
This section describes the singularities of the configuration hypersurfaces in terms of their rank.
For every homogeneous ideal $I$ in $K[A_1,\dots,A_n]$, define the \emph{variety} of $I$
\[
\V(I) = \{ [a_1:\cdots:a_n] \in \mathbb{P}(K^E)\,|\, f(a)=0 \text{ for all homogeneous }f\in I\}.
\]
Often the term variety is reserved for certain types of $I$ or possibly certain types of fields $K$,
but we impose no additional conditions for our usage here.

The configuration hypersurface of a configuration $W$ is $\V\left( \Psi_W \right)$.
If the dimension of $W$ is $m$, then the configuration hypersurface $X_W$ is the degeneracy
locus $D_{m-1}(B_E|_W)\subset \mathbb{P}(K^E)$.
Recall that a degeneracy locus of a family of matrices $M(A)$ in $\mathbb{P}(K^E)$
is the set of points
\[
D_k(M) = \left\{ a\in \mathbb{P}(K^E) \, |\, \rank M(a) \leq k \right\}.
\]
There is a chain of degeneracy loci
\begin{align*}
&D_0(B_E|_W)\subseteq D_1(B_E|_W)\subseteq \dots \subseteq\\ 
&\qquad \qquad \subseteq D_{m-2}(B_E|_W) 
 \subseteq D_{m-1}(B_E|_W)\subseteq D_m(B_E|_W) = \mathbb{P}(K^E).
\end{align*}

The central goal of the section, Theorem~\ref{T:singularityRank}, 
identifies $D_k(B_E|W)$ for $0\leq k\leq m-2$ as
$\V(I_k)$ where
\[
I_k = \langle \partial_F \Psi_W \,|\, \abs{F}\leq k \rangle.
\]
Our tools are Lemma~\ref{L:lowRankSubspace} 
and the description of $I_k$ from Corollary~\ref{C:partialsOfConfigurationPolynomials}.
We do not use or study the \emph{ideal associated to a subset} $V$ of $\mathbb{P}(K^E)$
\[
\I(V) = \langle f\in K[A_1,\dots,A_n]\, |\, f \text{is homogeneous and } f(a)=0 \text{ for all }a\in V\rangle.
\]
Therefore, we avoid many algebraic geometry issues that arise over non-algebraically closed fields.
We do not attempt to identify $\I\left( D_k(B_E|W) \right)$ with $I_k$,
and we do not know whether $I_k$ provides a reduced scheme structure.

\begin{definition}[Order of Singularity Ideal]
Let $X$ be a projective variety defined by a homogeneous ideal $I$ 
in $K[A_1,\dots, A_n]$.
The \emph{first order singularity ideal}, $S(I)$ or $S(X)$, is the ideal generated by the set
\[
\left\{ \frac{\partial f}{\partial A_i} \, \Bigg|\, f\in I \text{ and } i=1,\dots, n \right\}.
\]
The \emph{$k$-th order singularity ideal} is $S^{(k)}(X)=S^{(k)}(I)=S(S^{(k-1)}(I))$.
The \emph{locus of order at least $k$ singularities} of $X$,
$\Sing_{\geq k} X$, is the
scheme defined by $S^{(k)}(I)$. 
\end{definition}
Note that when $X$ is a hypersurface with $I=\langle f\rangle $, the first singularity ideal
$S(X)$ defines the singular locus $\Sing X$ because the matrix of derivatives of $f$
is zero if and only if its rank is zero.
However,
$\Sing X$ will not be a hypersurface, so generally,
\[
\V(S^{(2)}(I)) \varsubsetneq \Sing(\Sing X).
\]
In terms of the Jacobian matrix for $\Sing X$, $ \Sing(\Sing X)$ is defined by a rank condition, but $\V(S^{(2)}(I))$ corresponds
to the Jacobian matrix being identically $0$. 
Rather than defining singular loci of singular loci, the $k$-th order singularity ideals filter $X$ by
the degree of the tangent cones. More details on this interpretation are provided in Section~\ref{S:tangentSpaces}.

Euler's formula for homogeneous polynomials states that
\begin{equation} \label{eq:Eulersformula}
\sum_{i=1}^n A_i \frac{\partial f}{\partial A_i} = \deg(f) f.
\end{equation}
Therefore, each homogeneous $f$ in $I$ is also in $S(I)$ using the assumption that the characteristic of $K$ is zero so that $\deg(f)$
is invertible. 
The ideal $I$ is homogeneous, 
so every $g\in I$ is also in $S(I)$
by applying the same reasoning  to its homogeneous pieces.
Therefore,
the order of singularity ideals form a chain
\[
I \subseteq S(I)\subseteq S^{(2)}(I) \subseteq \dots,
\]
and so do the order at least $k$ singular loci defined by these ideals
\[
X \supseteq \Sing_{\geq 1} X \supseteq \Sing_{\geq 2} X \supseteq \dots.
\]
In particular, note that the $k$-th order singularity ideal can be defined by
\[
S^{(k)}(I)=
\left\langle 
\left. \frac{\partial^m f}{\partial A_1^{j_1}\cdots \partial A_n^{j_n}} \, \right|\, f\in I,\, m\leq k,
\text{ and } \sum_{i=1}^n j_i=m
\right\rangle.
\]
Define \emph{the points of multiplicity $k$}
\[
\Mult_k X = \Sing_{\geq k-1}X - \Sing_{\geq k}X.
\]
In other words, $\Mult_k X$ is the set of points in $X$ whose tangent cones have degree $k$.
\begin{lemma}\label{L:finiteGeneratingSet}
If $I$ is generated by  homogeneous polynomials $f_1,\dots,f_k$, then $S(I)$ is generated by \[L=\left\{ \left. \frac{\partial f_j}{\partial A_i}\, \right| \, \text{all }i,j\right\}.\]
\end{lemma}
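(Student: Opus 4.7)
The plan is to establish the two containments $\langle L \rangle \subseteq S(I)$ and $S(I) \subseteq \langle L \rangle$ separately, with the second being where the actual work lies.

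The forward containment $\langle L \rangle \subseteq S(I)$ is immediate from the definition of $S(I)$: each $\partial f_j / \partial A_i$ arises by differentiating an element $f_j \in I$, so it is a generator of $S(I)$.

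For the reverse containment, I would take an arbitrary $f \in I$ and an arbitrary index $i$, and show that $\partial f / \partial A_i \in \langle L \rangle$. Since $f_1, \dots, f_k$ generate $I$, write $f = \sum_j g_j f_j$ for some $g_j \in K[A_1, \dots, A_n]$. Applying the Leibniz rule gives
\[
\frac{\partial f}{\partial A_i} = \sum_j \frac{\partial g_j}{\partial A_i} f_j + \sum_j g_j \frac{\partial f_j}{\partial A_i}.
\]
The second sum lies in $\langle L \rangle$ by definition. For the first sum, it suffices to show $f_j \in \langle L \rangle$ for each $j$. Here I invoke Euler's formula~\eqref{eq:Eulersformula} for the homogeneous polynomial $f_j$:
\[
\deg(f_j)\, f_j = \sum_{i=1}^n A_i \frac{\partial f_j}{\partial A_i}.
\]
Since the characteristic of $K$ is zero and $\deg(f_j) > 0$ (the case $\deg(f_j) = 0$ forces $I = K[A]$, where the statement is trivial since $\partial 1/\partial A_i = 0$ but $1 \in I$ means $L$ already generates everything via $f = 1 \cdot 1$ in the representation—one handles this edge case separately or assumes positive-degree generators), the scalar $\deg(f_j)$ is invertible, giving
\[
f_j = \frac{1}{\deg(f_j)} \sum_{i=1}^n A_i \frac{\partial f_j}{\partial A_i} \in \langle L \rangle.
\]
This makes the first sum lie in $\langle L \rangle$ as well, completing the proof.

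The main obstacle, such as it is, is being slightly careful about whether $f \in I$ means $f$ is itself homogeneous (the definition of $S(I)$ quantifies over $f \in I$, which includes inhomogeneous elements of the homogeneous ideal). But since the argument above for $\partial f/\partial A_i$ only uses a representation $f = \sum g_j f_j$ and never assumes $f$ homogeneous, no issue arises. The essential ingredient is really just Euler's formula together with the characteristic-zero hypothesis, exactly as used implicitly in the preceding discussion of the chain $I \subseteq S(I) \subseteq S^{(2)}(I) \subseteq \cdots$.
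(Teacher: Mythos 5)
Your proposal is correct and follows essentially the same route as the paper's proof: both use the Leibniz rule on a representation $f = \sum h_j f_j$ and Euler's formula (with the characteristic-zero hypothesis) to place $f_j$ in $\langle L\rangle$, then observe that both terms in the product rule land in $\langle L\rangle$. The only cosmetic differences are that the paper first treats homogeneous $f$ and then disposes of nonhomogeneous elements by splitting into homogeneous pieces, whereas you argue directly for arbitrary $f\in I$, and you flag (correctly, though it is irrelevant to the paper's applications) the pathological case of a degree-zero generator.
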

\begin{proof}
The set $L$ is contained in $S(I)$ by definition. 
Note that each $f_j$ is also in the ideal generated by $L$ by Euler's formula~\eqref{eq:Eulersformula}.
Consider a homogeneous element $f \in I$.
Then there are homogeneous polynomials $h_1,\dots, h_k$ such that
\[
f = \sum_{j=1}^k h_j f_j.
\]
Differentiating gives
\[
\frac{\partial f}{\partial A_i} = \sum_{j=1}^k 
\left(
\frac{\partial h_j}{\partial A_i}f_j +
h_j \frac{\partial f_j}{\partial A_i}
\right).
\]
Using Euler's formula~\eqref{eq:Eulersformula}, both terms on the right are in the ideal generated by $L$.
Nonhomogeneous $f\in I$ may be written as the sum of their homogeneous pieces, each in $I$ by
the homogeneity of the generators of $I$.
\end{proof}

Starting with one generator $f$ in Lemma~\ref{L:finiteGeneratingSet} and proceeding by induction
gives the following corollary.
\begin{corollary}\label{C:principalSingular}
For a principal ideal $I=\langle f \rangle$,
\[
S^{(k)}(I)=
\left\langle 
\left. \frac{\partial^m f}{\partial A_1^{j_1}\cdots \partial A_n^{j_n}} \, \right|\, m\leq k,
\text{ and } \sum_{i=1}^n j_i=m
\right\rangle.
\]
\end{corollary}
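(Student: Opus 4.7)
The plan is a straightforward induction on $k$, driving the argument entirely with Lemma~\ref{L:finiteGeneratingSet}. Write $P_k$ for the ideal on the right-hand side of the claimed equality, so that $P_k$ is generated by all partial derivatives $\partial^m f/\partial A_1^{j_1}\cdots \partial A_n^{j_n}$ of $f$ of total order $m\leq k$. Since $f$ is homogeneous, every such partial is homogeneous, and since $j_i \geq 0$ with $\sum j_i = m \leq k$, there are only finitely many generators. This finiteness and homogeneity is exactly what is needed to feed the generators back into Lemma~\ref{L:finiteGeneratingSet}.

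For the base case $k=1$, Lemma~\ref{L:finiteGeneratingSet} applied to the single generator $f$ gives $S(I)=\langle \partial f/\partial A_i \mid i=1,\dots,n\rangle$, which is exactly $P_1$ (the $m=0$ generator $f$ itself lies in this ideal by Euler's formula~\eqref{eq:Eulersformula}, so including or excluding it does not change the ideal).

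For the inductive step, assume $S^{(k-1)}(I)=P_{k-1}$. Since $P_{k-1}$ is finitely generated by homogeneous polynomials (the partials of $f$ of order at most $k-1$), Lemma~\ref{L:finiteGeneratingSet} applies and shows that $S^{(k)}(I)=S(S^{(k-1)}(I))=S(P_{k-1})$ is generated by the set of first partials of the generators of $P_{k-1}$. Differentiating a partial of $f$ of order $m\leq k-1$ once more yields a partial of order $m+1\leq k$, so these new generators lie in $P_k$. Conversely, every generator of $P_k$ of order exactly $k$ arises this way, and every generator of order $\leq k-1$ lies in $P_{k-1}\subseteq S^{(k-1)}(I)\subseteq S^{(k)}(I)$ via the ascending chain of singularity ideals established in the excerpt. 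Hence $S^{(k)}(I)=P_k$, closing the induction.

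I do not anticipate a real obstacle: the proof is essentially bookkeeping, with the only mild point being that one must verify the generating set of $P_{k-1}$ remains homogeneous (so Lemma~\ref{L:finiteGeneratingSet} applies at each stage) and invoke Euler's formula to absorb the lower-order generators into the ideal generated by the new first partials. Both points follow immediately from the homogeneity of $f$ and the standing assumption $\characteristic K = 0$.
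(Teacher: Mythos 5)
Your proposal is correct and matches the paper's approach: the paper also obtains this corollary by starting with the single generator $f$, applying Lemma~\ref{L:finiteGeneratingSet}, and inducting on $k$. The bookkeeping you spell out (homogeneity of the partials, Euler's formula to absorb lower-order generators, the ascending chain of singularity ideals) is exactly what makes the terse inductive step in the paper go through.
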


The main result of this section is
\begin{theorem}[Singular Loci are Degeneracy Loci] \label{T:singularityRank}
Let $W$ be a nonzero configuration of dimension $m$ in $K^E$. 
Then
\[
\Sing_{\geq k} X_W = D_{m-k-1}(B_E|_W),
\]
for $1\leq k \leq m-1$.
In particular,
\[
\Mult_k X_W = \Corank_k B_E|_W.
\]
\end{theorem}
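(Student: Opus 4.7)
The approach is to translate the defining ideal of $\Sing_{\geq k}X_W$ into the ideal $\mathcal{A}_k$ of Corollary~\ref{C:partialsOfConfigurationPolynomials}, and then to convert membership in $\V(\mathcal{A}_k)$ into a corank condition using the complete-set-of-hyperplanes criterion Lemma~\ref{L:lowRankSubspace}. Because Proposition~\ref{P:configurationPolynomial} expresses $\Psi_W(A)$ as a polynomial that is multilinear in $A_1,\dots,A_n$, every partial of $\Psi_W$ with a repeated variable vanishes; thus Corollary~\ref{C:principalSingular} collapses to
\[
S^{(k)}(\langle \Psi_W\rangle)=\big\langle \partial_F\Psi_W: F\subseteq E,\,|F|\leq k\big\rangle = \mathcal{B}_k.
\]
For $1\leq k\leq m-1$, Corollary~\ref{C:partialsOfConfigurationPolynomials} gives $\mathcal{B}_k=\mathcal{A}_k$, so
\[
\Sing_{\geq k}X_W=\V(\mathcal{A}_k)=\bigcap_{H\subseteq E,\,|E-H|\leq k}\V(\Psi_{W^H}).
\]

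Now fix $a\in\mathbb{P}(K^E)$ and write $B=B_E(a)|_W$. For each $e\in E$ with $X_e|_W\not\equiv 0$, the kernel $\ker(X_e|_W)=W^{E-e}$ is a hyperplane in $W$, and these hyperplanes form a complete set because $\bigcap_{e\in E}W^{E-e}=W\cap K^{\emptyset}=0$. Moreover, $\bigcap_{e\in J}W^{E-e}=W^{E-J}$ for every $J\subseteq E$, so $\Psi_{W^{E-J}}(a)=0$ if and only if $B|_{H_J}$ is degenerate in the sense of Lemma~\ref{L:lowRankSubspace}. If $\corank_W B\geq k+1$, then for every $H$ with $|E-H|\leq k$, Lemma~\ref{L:intersectionDimensionBound} gives $\dim W^H\geq m-k$ while $\rank_{W^H}B\leq\rank_W B\leq m-k-1<\dim W^H$, hence $\Psi_{W^H}(a)=0$. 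Conversely, if $\Psi_{W^H}(a)=0$ for every such $H$, then taking $H=E$ shows that $B$ is degenerate on $W$, and taking $H=E-J$ with $1\leq |J|\leq k$ shows that $B$ is degenerate on each intersection of up to $k$ of the hyperplanes above. Lemma~\ref{L:lowRankSubspace} then forces $\rank_W B<m-k$, i.e., $a\in D_{m-k-1}(B_E|_W)$. This proves the displayed equality $\Sing_{\geq k}X_W=D_{m-k-1}(B_E|_W)$ for $1\leq k\leq m-1$.

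The multiplicity statement then follows by taking set differences, since by definition $\Mult_k X_W=\Sing_{\geq k-1}X_W\setminus\Sing_{\geq k}X_W$. For $k\geq 2$ both terms are covered by the main assertion; for $k=1$ one additionally uses $\Sing_{\geq 0}X_W=X_W=\V(\Psi_W)=D_{m-1}(B_E|_W)$. Subtracting, the remaining points are exactly those $a$ with $\corank_W B=k$, i.e., $\Corank_k B_E|_W$.

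The only genuinely nontrivial step is the reverse direction of the rank translation in the second paragraph, where degeneracy on every intersection of at most $k$ of the coordinate-type hyperplanes must be promoted to a global corank bound on $W$. That content is exactly what Lemma~\ref{L:lowRankSubspace} packages, so once the ideal identifications from Section~\ref{S:restrictionsAndProjections} are in hand the theorem reduces to a clean bookkeeping exercise combining that lemma with the multilinearity of $\Psi_W$.
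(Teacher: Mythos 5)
Your proof is correct and follows essentially the same route as the paper: Lemma~\ref{L:singularityRestrictedConfiguration} in the paper packages the ideal identification $S^{(k)}(\langle\Psi_W\rangle)=\langle\Psi_{W^{E-F}} : |F|\le k\rangle$ via Corollary~\ref{C:partialsOfConfigurationPolynomials}, and the paper's proof of Theorem~\ref{T:singularityRank} then applies Lemma~\ref{L:intersectionDimensionBound} for the forward inclusion and Lemma~\ref{L:lowRankSubspace} with the complete set of hyperplanes $\{W^{E-e}: W^{E-e}\neq W\}$ for the reverse inclusion, exactly as you do. Your slightly more explicit handling of the $k=1$ base case for the multiplicity statement is a harmless elaboration of the paper's terse ``follows immediately.''
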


%
\begin{lemma}[Singularities Determined by Restricted Configurations] \label{L:singularityRestrictedConfiguration}
Let $W$ be a nonzero configuration of dimension $m$ in $K^E$.
For $1\leq k \leq m-1$,
\[
\Sing_{\geq k} X_W = \V\left(\Big\langle \Psi_{W^{E-F}} \,\Big|\,  \abs{F}\leq k \Big\rangle \right) = \bigcap_{
\substack{ F \subseteq E \\ \abs{F}\leq k} }
X_{(W^{E-F},K^E)}.
\]
\end{lemma}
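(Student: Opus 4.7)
The plan is to unwind the definition of $\Sing_{\geq k}$ into partial derivatives and then replace those partial derivatives with configuration polynomials of restrictions using the corollaries just established.

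First, by definition, $\Sing_{\geq k} X_W = \V\bigl(S^{(k)}(\langle \Psi_W \rangle)\bigr)$, so I need a workable description of $S^{(k)}(\langle \Psi_W\rangle)$. Corollary \ref{C:principalSingular} gives a generating set by all partial derivatives of order at most $k$. The key observation here is that $\Psi_W$ is squarefree: by Proposition \ref{P:configurationPolynomial} each monomial of $\Psi_W$ is $\Plucker_F(W)^2 \prod_{f\in F} A_f$ with $F$ a subset of $E$, so every variable $A_e$ appears to degree at most $1$, and hence $\partial_{A_e}^2 \Psi_W = 0$. This means the only multi-indices $(j_1,\dots,j_n)$ in the generating set of Corollary \ref{C:principalSingular} that contribute nontrivial derivatives are those with each $j_i \in \{0,1\}$, which are in bijection with subsets $F \subseteq E$ of size at most $k$. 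The corresponding derivative is $\partial_F \Psi_W$, so
\[
S^{(k)}(\langle \Psi_W \rangle) = \Big\langle \partial_F \Psi_W \,\Big|\, F\subseteq E,\, \abs{F}\leq k\Big\rangle.
\]

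Second, I would apply Corollary \ref{C:partialsOfConfigurationPolynomials} with the reindexing $H = E-F$. Since the lemma assumes $1\leq k\leq m-1$, the hypothesis of that corollary is met, and it delivers exactly the ideal equality
\[
\Big\langle \partial_F \Psi_W \,\Big|\, \abs{F}\leq k\Big\rangle \;=\; \Big\langle \Psi_{W^{E-F}} \,\Big|\, \abs{F}\leq k\Big\rangle.
\]
(The bound $k\leq m-1$ also guarantees, via Lemma \ref{L:intersectionDimensionBound}, that each $W^{E-F}$ with $\abs{F}\leq k$ has dimension at least $m-k\geq 1$, so the configuration polynomials on the right are well-defined.) Taking varieties of both sides yields the first equality of the lemma.

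For the second equality, I would use the standard fact that the variety of a sum of homogeneous ideals is the intersection of their varieties: the ideal on the right is the sum of the principal ideals $\langle \Psi_{W^{E-F}}\rangle$ over all $F$ with $\abs{F}\leq k$, so
\[
\V\!\left(\Big\langle \Psi_{W^{E-F}} \,\Big|\, \abs{F}\leq k \Big\rangle\right) \;=\; \bigcap_{\substack{F\subseteq E \\ \abs{F}\leq k}} \V(\Psi_{W^{E-F}}) \;=\; \bigcap_{\substack{F\subseteq E \\ \abs{F}\leq k}} X_{(W^{E-F},K^E)},
\]
by the definition of the configuration hypersurface. The main obstacle is not a deep one: it is the bookkeeping needed to pass from multi-indexed partial derivatives to subset-indexed partial derivatives via squarefreeness of $\Psi_W$, and to ensure the indexing conventions of Corollary \ref{C:partialsOfConfigurationPolynomials} (where $H$ is the subset retained) line up correctly with the statement of the present lemma (where $F$ is the subset removed).
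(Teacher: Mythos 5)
Your proposal follows the paper's proof essentially step for step: both rely on Corollary~\ref{C:principalSingular} together with the observation that $\Psi_W$ is multilinear (so repeated derivatives in the same variable vanish) to reduce to subset-indexed partials, and then invoke Corollary~\ref{C:partialsOfConfigurationPolynomials} with $H=E-F$ to rewrite the ideal. The extra remarks on the $k\leq m-1$ bound and on the variety of a sum of ideals being an intersection are correct bookkeeping that the paper leaves implicit.
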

\begin{proof}
Note that when $\Psi_W$ is differentiated more than once with respect to the same variable, the derivative
is identically zero because no variable occurs more than once in each monomial.
Therefore, 
\[
S^{(k)}(X_W) = \langle \partial_F\Psi_W \,|\, \abs{F}\leq k\rangle
\]
using Corollary~\ref{C:principalSingular}.
By Corollary~\ref{C:partialsOfConfigurationPolynomials} with $H=E-F$,
\[
\Big\langle \partial_F\Psi_W \,\Big|\, \abs{F}\leq k\Big\rangle
=
\Big\langle \Psi_{W^{E-F}} \,\Big|\,  \abs{F}\leq k \Big\rangle,
\]
which completes the proof.
\end{proof}
\begin{proof}[Proof of Theorem~\ref{T:singularityRank}]
%
%
If $B_E(a)$ has rank at most $m-k-1$ on $W$, then it has rank at most $m-k-1$ restricted to the subspace $W^{E-F}$.
By Lemma~\ref{L:intersectionDimensionBound}, $\dim W^{E-F}$ is bounded below by $m-k$,
so $B_E(a)$ is not full rank when restricted to $W^{E-F}$. Therefore, the determinant vanishes:
\[\det B_E(a)|_{W^{E-F}}= \Psi_{W^{E-F}}(a)=0.\] 
 
If $a\in \V\left(\left\langle \Psi_{W^{E-F}} \, \big|\, \abs{F}\leq k \right\rangle \right)$, then $B_E(a)$ is degenerate when
restricted to all subspaces $W^{E-F}$ for $\abs{F}\leq k$. 
Note that the subspaces $W^{E-F}$ are intersections of the complete set of hyperplanes $\left\{ W^{E-e}\, |\, e\in E,\, W\neq W^{E-e}\right\}$.
As a result, 
the conditions of Lemma~\ref{L:lowRankSubspace} are satisfied, so the rank of $B_E(a)$ is at most $m-k-1$.

The statement about multiplicity and corank follows immediately.
\end{proof}
\begin{example}[Trivial Configuration]
The configuration $W=K^E$ has configuration polynomial $\Psi_W(A)=\prod_{e\in E} A_e$.
Its configuration hypersurface is the union of the coordinate hyperplanes.
For each $k$ between $1$ and $m-1$, $\Sing_{\geq k} X_W$ is the union of the coordinate linear subspaces of codimension $k+1$.
In particular, $\Sing_{\geq k}$ has codimension $1$ in $\Sing_{\geq k-1}$. For example in $\mathbb{P}(K^3 )$, 
there are three distinct coordinate hyperplanes 
whose union is $X_W$.  These hyperplanes meet at $\Sing_{\geq 1} X_W$, which consists of the three projective points representing the
coordinate axes.
The locus $\Sing_{\geq 2} X_W = \V( A_1, A_2, A_3)$ is empty in $\mathbb{P}(K^3)$, 
though in the affine space $K^3$ it is simply the origin.
 \end{example}
\begin{remark} 
The main theorems in~\cite{IlicLandsberg} and~\cite{Graham} provide conditions under which
degeneracy loci are nonempty.
The conditions of these theorems are satisfied for all $k$ between $1$ and $m-1$ for $D_k(B_E|_W)$, and
hence, $D_k(B_E|_W)=\Sing_{\geq m-k-1}X_W$ is nonempty for such $k$.
The connectedness of these degeneracy loci is a more delicate matter.
For example, whether the connectedness theorems in~\cite{Tu} and~\cite{HarrisTu} apply will depend on the specific values of $n$, $m$, and $k$.
\end{remark}
 
\subsection{Tangent Cones}\label{S:tangentSpaces}

In this section, we show how to interpret the locus of order at least $k$ singularities in terms of tangent cones.
The theory of tangent cones is greatly simplified by restricting the field $K$ to be algebraically closed,
so we assume $K$ is algebraically closed for this section.

Let $X$ be a projective hypersurface defined by a homogeneous polynomial $f$.
If $a=[a_1:\cdots:a_n]$ is a point in $X$ and $a_i\neq 0$, then the multivariate
Taylor formula in the local coordinates with $a_i=1$ expresses $f$ near $a$ as
\[
f\left(\frac{A}{A_i}\right) = \sum_{j=0}^{\deg f} \sum_{\abs{J}=j} \frac{1}{J!} \frac{\partial^J f}{\partial A^J}\Big|_{\frac{A}{A_i}=\frac{a}{a_i}} \left(\frac{A}{A_i}-\frac{a}{a_i}\right)^J.
\]
Therefore, when $a\in \Mult_k X$, all the terms with $j< k$ are zero and
\begin{equation}\label{eq:kTaylor}
f\left(\frac{A}{A_i}\right) = \sum_{j=k}^{\deg f} \sum_{\abs{J}=j} \frac{1}{J!} \frac{\partial^J f}{\partial A^J}\Big|_{\frac{A}{A_i}=\frac{a}{a_i}} \left(\frac{A}{A_i}-\frac{a}{a_i}\right)^J.
\end{equation}
Picking out the $k$-th term of Formula~\eqref{eq:kTaylor}
defines the affine tangent cone
\begin{equation} \label{eq:tangentCone}
TC_{a} \V(f) = \V\left( \sum_{\abs{J}=k} \frac{1}{J!} \frac{\partial^J f}{\partial A^J}\Big|_{\frac{A}{A_i}=\frac{a}{a_i}} \left( \frac{A}{A_i}-\frac{a}{a_i} \right)^J \right).
\end{equation}
One may homogenize the leading term at $a$
\begin{align*}
f_{a,k}\left(\frac{A}{A_i}\right) &= \sum_{\abs{J}=k} \frac{1}{J!} \frac{\partial^J f}{\partial A^J}\Big|_{\frac{A}{A_i}=\frac{a}{a_i}} \left(\frac{A}{A_i}-\frac{a}{a_i}\right)^J \\
&= \frac{1}{a_i^k A_i^k}\sum_{\abs{J}=k} \frac{1}{J!} \frac{\partial^J f}{\partial A^J}\Big|_{\frac{A}{A_i}=\frac{a}{a_i}} (a_i A-aA_i)^J,
\end{align*}
so the  projective tangent cone to $\V(f)$ at $a$ is 
\begin{equation}\label{eq:projectiveTangentCone}
\mathbb{T}C_{a} \V(f) = \V\left( \sum_{\abs{J}=k} \frac{1}{J!} \frac{\partial^J f}{\partial A^J}\Big|_{\frac{A}{A_i}=\frac{a}{a_i}} \left(a_i A-a A_i\right)^J \right).
\end{equation}
The notation $\left( a_i A- a A_i \right)^J$ for a tuple $J=(j_1,\dots,j_n)$ is shorthand for
\[
\left( a_i A-a A_i \right)^J = \prod_{\ell=1}^n \left( a_i A_{\ell} -a_{\ell} A_i \right)^{j_{\ell}}.
\]
In particular, if $j_i>0$, the product is zero.
\begin{proposition}[Configuration Tangent Cone]\label{P:configurationTangentCone}
Let $W$ be a nonzero configuration in $K^E$. 
If $a=[a_1:\cdots:a_n]$ is a point of multiplicity $k$ with $a_i\neq 0$, then the tangent cone to $X_W$ at $a$ is
\[
TC_a X_W = \V\left( \sum_{\substack{J\subseteq E, \,  \abs{J}=k\\ \dim W^{E-J} = \dim W-k}} C_J \Psi_{W^{E-J}}\left(\frac{a}{a_i}\right) \left( \frac{A}{A_i}-\frac{a}{a_i} \right)^J \right)
\]
where the $C_J$ are nonzero constants independent of $a$.
The projective tangent cone to $X_W$ at $a$ is
\[
\mathbb{T}C_a X_W = \V\left( \sum_{\substack{J\subseteq E, \,  \abs{J}=k\\ \dim W^{E-J} = \dim W-k}} C_J \Psi_{W^{E-J}}\left(\frac{a}{a_i}\right) \left(a_i A-a A_i\right)^J \right).
\]
If $\Psi_W$ is a graph polynomial, all $C_J=1$.
\end{proposition}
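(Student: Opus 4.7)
The plan is to apply the explicit tangent-cone formulas~\eqref{eq:tangentCone} and~\eqref{eq:projectiveTangentCone} to $f = \Psi_W$, and then use the results of Section~\ref{S:restrictionsAndProjections} to rewrite each partial derivative $\partial^J \Psi_W$ as a (scalar multiple of a) configuration polynomial of a restriction.

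First I would observe that by Proposition~\ref{P:configurationPolynomial} the configuration polynomial $\Psi_W(A)$ is multilinear in the variables $A_e$: every monomial is a squarefree product of $\dim W$ distinct variables. Consequently $\partial^2 \Psi_W / \partial A_e^2 = 0$ for every $e \in E$, so in the Taylor expansion~\eqref{eq:kTaylor} the only multi-indices $J = (j_1,\dots,j_n)$ that contribute are those with all $j_\ell \in \{0,1\}$, i.e.\ those coming from subsets $J \subseteq E$ with $\abs{J} = k$. For such $J$ we have $J! = 1$, which removes the combinatorial factor.

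Next, since $a \in \Mult_k X_W$, Theorem~\ref{T:singularityRank} guarantees that the surviving terms really do start in degree $k$, so the $k$-th slice of~\eqref{eq:kTaylor} is exactly the affine tangent cone. For each subset $J \subseteq E$ with $\abs{J} = k$, I would then invoke Lemma~\ref{L:nontrivialPartialMore} with $H = E - J$: by Lemma~\ref{L:intersectionDimensionBound} we have $\dim W^{E-J} \geq \dim W - k$, and the equivalence of conditions (2) and (3) in Lemma~\ref{L:nontrivialPartialMore} says that when $\dim W^{E-J} = \dim W - k$ (equivalently, $W^{E-J}$ has codimension $k$ in $W$), there is a nonzero constant $C_J$ depending only on $W$ and $J$ such that $\partial_J \Psi_W = C_J \Psi_{W^{E-J}}$, while if $\dim W^{E-J} > \dim W - k$ the ``failure'' clause of the same lemma forces $\partial_J \Psi_W = 0$. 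This matches the indexing set in the proposition exactly.

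Substituting these identifications back into~\eqref{eq:tangentCone} gives the affine formula, and substituting into~\eqref{eq:projectiveTangentCone} gives the projective one; the common factor $(a_i^k A_i^k)^{-1}$ used in homogenizing simply rescales the defining polynomial and does not change the variety cut out. Finally, in the graph-polynomial case, the Remark following Lemma~\ref{L:nontrivialPartialMore} explains that working in an integral basis fixes all coefficients in $\Psi_G$ and in $\Psi_{G_1}$ to $0$ or $1$, so the proportionality constants $C_J$ are all $1$. I do not foresee a serious obstacle here: the only delicate point is verifying that the ``vanishing'' case of Lemma~\ref{L:nontrivialPartialMore} accounts for precisely the $J$ excluded from the indexing set (those with $\dim W^{E-J} > \dim W - k$), and this is immediate from the equivalence $(1)\Leftrightarrow(2)\Leftrightarrow(3)$ combined with the dimension bound from Lemma~\ref{L:intersectionDimensionBound}.
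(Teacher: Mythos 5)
Your proposal is correct and follows essentially the same path as the paper's own proof: plug $f=\Psi_W$ into the Taylor/tangent-cone formulas, use multilinearity of $\Psi_W$ from Proposition~\ref{P:configurationPolynomial} to reduce the multi-indices to subsets $J\subseteq E$, and then identify $\partial_J\Psi_W$ with $C_J\Psi_{W^{E-J}}$ (or $0$) via Lemma~\ref{L:nontrivialPartialMore}, with the graph-polynomial normalization coming from the remark following that lemma. The only extra words you spend (noting $J!=1$ and invoking Theorem~\ref{T:singularityRank} to justify that the expansion starts in degree $k$, when in fact that is already the definition of $\Mult_k$) do not change the argument.
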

\begin{proof}
The proposition follows from Equations~\eqref{eq:tangentCone} and~\eqref{eq:projectiveTangentCone} setting $f=\Psi_W$.
Namely, the $k$-th order term of $\Psi_W$ at $a$ is
\[
\Psi_{W,k,a}\left(\frac{A}{A_i}\right) = \sum_{\abs{J}=k} \frac{1}{J!} \frac{\partial^J \Psi_W}{\partial A^J} \Big|_{\frac{A}{A_i}=\frac{a}{a_i}} \left(\frac{A}{A_i}-\frac{a}{a_i}\right)^J.
\]
The
configuration polynomial has degree at most one in each variable (Proposition~\ref{P:configurationPolynomial}),
so only tuples $J$
that are sequences of $0$s and $1$s need to be included in the sum. Such tuples correspond to subsets of $E$.
If $\abs{J}=k$, 
then Lemma~\ref{L:nontrivialPartialMore} simplifies
$\partial_J \Psi_W$ to $0$  or to $C_J \Psi_{W^{E-J}}$  depending on whether
$\dim W^{E-J} = \dim W -k$.
\end{proof}
The relationship between the rank loci and the singular loci for configuration hypersurfaces
is similar to the relationship between the two loci for generic symmetric determinantal loci.
We make the relationship explicit in the following Corollary~\ref{C:intersectionOfTangentCones}.
For every vector space $W$,
the generic symmetric degeneracy locus $\mathfrak{X}$ 
is the locus of points in $\mathbb{P}\left( \Sym^2 W\spcheck \right)$
that do not have full rank as a bilinear form on $W$. 
Suppose that $W$ is a configuration in the based vector space $K^E$ so that
there is a surjective restriction map
\[
\Sym^2 \left( K^E \right)\spcheck \xrightarrow{\pi} \Sym^2 W\spcheck.
\]
Let $Z$ be the kernel of $\pi$, let $\pi$ also denote the
rational map
\[
\mathbb{P}\left( \Sym^2\left(K^E\right)\spcheck \right) \dashrightarrow \mathbb{P}\left( \Sym^2 W\spcheck \right)
\]
defined on the complement of $\mathbb{P}(Z)$.
Because the fibers of $\pi$ are linear, 
$\mathbb{P}\left( \Sym^2\left(K^E\right)\spcheck \right)$
is a cone over $\mathbb{P}\left( \Sym^2 W\spcheck \right)$ with
vertex $\mathbb{P}(Z)$.

The family $\mathbb{P}\left(B_E(A)\right)\cong \mathbb{P}\left(K^E\right)$ is a 
linear subspace of $\mathbb{P}\left( \Sym^2\left(K^E\right)\spcheck\right)$
containing the configuration hypersurface $X_W$.
Let $L=\mathbb{P}(\pi(B_E(A)))$ be the image of the family $\mathbb{P}(B_E(A))$ 
in $\mathbb{P}(\Sym^2 W\spcheck)$.
Note that $X_W$ is a cone over $L\cap \mathfrak{X}$ with vertex $V=\mathbb{P}(B_E(A))\cap\mathbb{P}(Z)$;
that is, $\pi$ is defined on $X_W-V$ with image $L\cap \mathfrak{X}$ and linear fibers.
\begin{corollary}[Intersection of Tangent Cones] \label{C:intersectionOfTangentCones}
For every $x\in X_W-V$, 
\[
\Mult_x X_W = \Mult_{\pi(x)} \mathfrak{X},
\]
and therefore,
\[
\mathbb{T}C_{\pi(x)} \left( L\cap \mathfrak{X} \right) = L\cap \mathbb{T}C_{\pi(x)} \mathfrak{X},
\]
and similarly for the affine tangent cones.
When the vertex $V$ is empty, $\pi$ embeds $\mathbb{P}(B_E(A))$ into $\mathbb{P}(\Sym^2 W\spcheck)$, and
\[
\mathbb{T}C_x X_W = L\cap \mathbb{T}C_x \mathfrak{X}.
\]
\end{corollary}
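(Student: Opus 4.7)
The plan is to reduce the statements to two ingredients: that multiplicities are preserved by $\pi$, and that tangent cones of a hypersurface commute with restriction to a linear subspace whenever the multiplicity does not jump under the restriction.

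First, I would establish the multiplicity equality $\Mult_x X_W = \Mult_{\pi(x)} \mathfrak{X}$. The restriction of $\pi$ to the linear subspace $\mathbb{P}(B_E(A))$ sends $x = [a]$ to the class of the bilinear form $B_E(a)|_W$. Theorem~\ref{T:singularityRank} identifies $\Mult_x X_W$ with $\corank B_E(a)|_W$, while the parallel classical result for the generic symmetric determinantal variety (Theorem~22.33 in~\cite{Harris}, cited in the introduction as the motivation for our main theorem) identifies $\Mult_{\pi(x)} \mathfrak{X}$ with the same corank. Hence the two multiplicities agree.

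For the tangent cone identity $\mathbb{T}C_{\pi(x)}(L \cap \mathfrak{X}) = L \cap \mathbb{T}C_{\pi(x)} \mathfrak{X}$, let $k$ denote the common multiplicity. Because $X_W$ is a cone over $L \cap \mathfrak{X}$ with vertex $V$ and multiplicities are constant along cone fibers away from the vertex, I would conclude $\Mult_{\pi(x)}(L \cap \mathfrak{X}) = \Mult_x X_W = k$. Let $\Delta$ denote the determinant that cuts out $\mathfrak{X}$ in $\mathbb{P}(\Sym^2 W\spcheck)$ and let $\Delta_k$ be its degree-$k$ Taylor term at $\pi(x)$, so that $\mathbb{T}C_{\pi(x)} \mathfrak{X} = \V(\Delta_k)$. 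Since $L \cap \mathfrak{X}$ is defined in $L$ by $\Delta|_L$ and has multiplicity exactly $k$ at $\pi(x)$, the degree-$k$ Taylor term of $\Delta|_L$ at $\pi(x)$ is nonzero and equals $\Delta_k|_L$. Therefore
\[
\mathbb{T}C_{\pi(x)}(L \cap \mathfrak{X}) = \V(\Delta_k|_L) = L \cap \V(\Delta_k) = L \cap \mathbb{T}C_{\pi(x)} \mathfrak{X}.
\]
The affine version follows from the same argument applied to the non-homogenized leading term in Equation~\eqref{eq:kTaylor}.

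When $V = \emptyset$, $\pi$ restricts to a linear isomorphism $\mathbb{P}(B_E(A)) \xrightarrow{\sim} L$ that identifies $x$ with $\pi(x)$ and $X_W$ with $L \cap \mathfrak{X}$, so $\mathbb{T}C_x X_W = \mathbb{T}C_{\pi(x)}(L \cap \mathfrak{X}) = L \cap \mathbb{T}C_x \mathfrak{X}$ by the previous step. The main obstacle I anticipate is precisely the tangent cone step: a priori, the multiplicity of a hypersurface can strictly increase upon intersecting with a linear subspace, which would force $\Delta_k|_L$ to vanish identically and cause the two tangent cones in $L$ to differ in degree. The first step, drawing on Theorem~\ref{T:singularityRank} together with the classical fact about the generic symmetric determinantal variety, precisely precludes this pathology and makes the identity of tangent cones fall out automatically.
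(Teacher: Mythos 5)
Your proposal is correct and takes essentially the same route as the paper's proof: both deduce $\Mult_x X_W = \Mult_{\pi(x)}\mathfrak{X}$ by matching each side to the corank via Theorem~\ref{T:singularityRank} and the classical fact for the generic symmetric determinantal variety, use the cone structure of $X_W$ over $L\cap\mathfrak{X}$ to conclude that multiplicity does not jump under the linear section, and derive from that non-jumping that the leading Taylor term restricts correctly. Your write-up is a bit more explicit about why the degree-$k$ term of $\Delta|_L$ is nonzero, but the logic and ingredients are identical.
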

\begin{proof}
When $\mathfrak{X}$ is cut by the linear section $L$, the leading term at $\pi(x)$ may vanish leading to a
higher multiplicity. 
It is well-known that $\Mult_{\pi(x)} \mathfrak{X}$ is the corank of $x$ as a bilinear
form on $W$ (see~\cite{JozefiakLascouxPragacz} or~\cite{Harris} for example).
By  Theorem~\ref{T:singularityRank},
$\Mult_x X_W$ is also  the corank of $x$ as a bilinear form on $W$.
Moreover, $\Mult_x X_W = \Mult_{\pi(x)} L\cap \mathfrak{X}$ because $X_W$ is 
a cone over $L\cap \mathfrak{X}$.
In other words, the
multiplicity at $\pi(x)$ does not increase  when restricting to the linear section $L\cap\mathfrak{X}$. 
Therefore, the leading term defining $X_W$ at $x$ is found by linear substitution of
$L$ into the leading term defining $\mathfrak{X}$ at $\pi(x)$, which means that the tangent cone of $L\cap \mathfrak{X}$
is just the linear section of $\mathbb{T}C_{\pi(X)} \mathfrak{X}$.
\end{proof}
\begin{figure}
\begin{center}
\begin{tikzpicture}[node distance = 3cm]
\SetVertexNoLabel
\GraphInit[vstyle=Classic]
\Vertex{a}
\SO(a){b}
\Edge[style={bend left,out=90, in=90},label={$e_1$},labelstyle={left}](b)(a)
\Edge[style={bend left},label={$e_2$},labelstyle={right}](b)(a)
\Edge[style={bend right},label={$e_3$},labelstyle={right}](b)(a)
\Edge[style={bend right,out=-90, in=-90},label={$e_4$},labelstyle={right}](b)(a)
\end{tikzpicture}
\end{center}
\caption{A simple example of a graph.}
\label{F:ThreeBananas}
\end{figure}
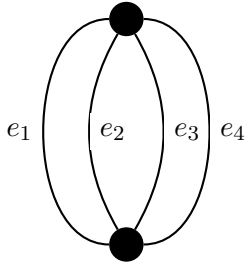
\begin{example}\label{E:tangentConeThreeBananas}
Consider the graph $G$ in Figure~\ref{F:ThreeBananas}.
Let $E=\{e_1,e_2,e_3,e_4\}$, and consider the configuration $W=H_1(G)$ in $K^E$.
There are orientations of $G$ for which an integral basis of $H_1(G)$ is given
by $\{ \ell_1,\ell_2,\ell_3\}$ where 
\begin{align*}
\ell_1 &= e_1+e_2\\
\ell_2 &= e_2+e_3\\
\ell_3 &= e_3+e_4.
\end{align*}
Elements of $\Sym^2 \left(K^E\right)\spcheck$ and $\Sym^2 W\spcheck$
can be represented by
symmetric matrices in these bases. Because these matrices are symmetric,
we only write the upper triangle to reduce clutter.
For $a\in \Sym^2 \left(K^E\right)\spcheck$ represented by
\[
a=
\begin{pmatrix}
a_{1} & a_{12} & a_{13} & a_{14}\\
* & a_{2} & a_{23} & a_{24}\\
* & * & a_{3} & a_{34}\\
* & * & * & a_{4}
\end{pmatrix},
\]
the image of $a$ in $\Sym^2 W\spcheck$ can be represented 
\[
\pi(a) =
\begin{pmatrix}
a_{1}+2a_{12}+a_{2} & a_{12}+a_{13}+a_{2}+a_{23} & a_{13}+a_{14}+a_{23}+a_{24} \\
* & a_{2}+2a_{23}+a_{3} & a_{23}+a_{24}+a_{3}+a_{34} \\
* & * & a_{3}+2a_{34}+a_{4}
\end{pmatrix}.
\]
%
An element of the  family $B_E(A)$ is of the form
\[
\begin{pmatrix}
a_{1} & 0 & 0 & 0\\
0 & a_{2} & 0 & 0\\
0 & 0 & a_{3} & 0 \\
0 & 0 & 0 & a_{4}
\end{pmatrix},
\]
and its image in $\pi\left(B_E(A)\right)$ is
\[
\begin{pmatrix}
a_{1}+a_{2} & a_{2} & 0 \\
* & a_{2}+a_{3} & a_{3} \\
* & * &  a_{3}+a_{4}
\end{pmatrix}.
\]

Let a generic element of $\Sym^2 W\spcheck$ be denoted
\[
b =
\begin{pmatrix}
b_1 & b_{12} & b_{13} \\
* & b_2 & b_{23} \\
* & * & b_3
\end{pmatrix}.
\]
The image $L=\mathbb{P}\left(\pi\left( B_E(A) \right)\right)$ is defined
by the ideal \[
I=\langle B_{13},B_2-B_{12}-B_{23}\rangle.
\]
The generic symmetric degeneracy locus is defined by 
the determinant 
\[
f(B)=B_1B_2B_3
+2B_{12}B_{23}B_{13}
-B_1B_{23}^2
-B_2B_{13}^2
-B_3B_{12}^2.
\]
The kernel of $\pi$ meets $B_E(A)$ at zero,
and thus the vertex of the map of projective
varieties is empty.
In particular, the isomorphism of $\mathbb{P}\left(B_E(A)\right)$ with
$L$ corresponds to the isomorphism of rings
\begin{align*}
K[B]/I &\stackrel{\pi^*}{\longrightarrow} K[A]\\
B_1 &\longmapsto A_1+A_2 \\
B_2 & \longmapsto A_2+A_3 \\
B_3 & \longmapsto A_3+A_4 \\
B_{12} & \longmapsto A_2 \\
B_{23} & \longmapsto A_3.
\end{align*}

Consider the rank-one matrix
\[
b = 
\begin{pmatrix}
1 & 0 & 0 \\
0 & 0 & 0 \\
0 & 0 & 0 
\end{pmatrix},
\]
and note that $b=\pi(a)$ where
\[
a = 
\begin{pmatrix}
1 & 0 & 0 & 0\\
0 & 0 & 0 & 0\\
0 & 0 & 0 & 0\\
0 & 0 & 0 & 0
\end{pmatrix}.
\]
Because $b$ has rank one, it must have multiplicity two on both $\mathfrak{X}$
and $X_W$. 
Therefore, all of the first partials of $f$ with respect to the $B$ coordinates vanish,
but there are second partials that do not:
\begin{align*}
\frac{\partial^2 f}{\partial B_{2}\partial B_3}(b) &= b_1=1\\
\frac{\partial^2 f}{\partial B_{23}^2}(b) &= -2b_1=-2.
\end{align*}
Therefore, the formula for the tangent cone is
\[
\mathbb{T}C_b \mathfrak{X} = \V( B_2B_{3}-B_{23}^2 ).
\]
Intersecting the tangent cone with the subspace $L$ and transforming to the $A$ coordinates
gives 
\[
L\cap \mathbb{T}C_b \mathfrak{X} = \V( (A_2+A_3)(A_3+A_4)-A_3^2 ).
\]
This hyperplane agrees with the computation for 
$\mathbb{T}C_a X_W$.
Namely, the second partials of $\Psi_W$ that do not vanish at $a$ are
\begin{align*}
\frac{\partial^2 \Psi_W}{\partial A_2 \partial A_3}(a) &= a_1 =1 \\
\frac{\partial^2 \Psi_W}{\partial A_2 \partial A_4}(a) &= a_1 =1 \\
\frac{\partial^2 \Psi_W}{\partial A_3 \partial A_4}(a) &= a_1 =1, 
\end{align*}
which means the tangent cone is defined by 
\[
\mathbb{T}C_a X_W =
\V( A_2A_3+A_2A_4+A_3A_4 ).
\]

For other linear subspaces $\tilde{L}$, it is possible that there are $b\in \tilde{L}\cap\mathfrak{X}$ such that 
$\Mult_b \mathfrak{X} \neq \Mult_b \tilde{L}\cap \mathfrak{X}$. 
For example, consider $\tilde{L} = \langle B_3, B_{23}\rangle$ and
\[
b = 
\begin{pmatrix}
1 & 0 & 0 \\
0 & 1 & 0 \\
0 & 0 & 0
\end{pmatrix}.
\]
In this case, the only non-vanishing first partial at $b$ is
\[
\frac{\partial f}{\partial B_3}(b) = b_1b_2-b_{12} = 1,
\]
so the tangent cone is
\[
\mathbb{T}C_b \mathfrak{X} = \V( B_3 ),
\]
which contains $\tilde{L}$.
In the coordinate ring $K[B_1,B_2,B_{12},B_{13}]$ for $\tilde{L}$,
$\tilde{L}\cap \mathfrak{X}$ is defined by
\[
\bar{f}(B) = -B_2B_{13}^2.
\]
In particular, all first partials of $\bar{f}$ vanish at $b$,
and the multiplicity at $b$ has increased by intersecting with $\tilde{L}$.

\end{example}

\section{Conclusion}\label{S:conclusion} 

We have identified  formulas for the derivatives of
configuration polynomials as configuration polynomials 
for restrictions of the original configuration.
These formulas allow linear algebra of degenerate
bilinear forms to prove our main result relating
the multiplicities and ranks of the points in 
configuration hypersurfaces.
The multiplicity-rank relationship is the same as
for the generic symmetric determinantal loci, but
our approach is quite different.
A more geometric approach along the lines of the
proof for the generic case would be interesting
and may provide a better understanding of the
geometry of the incidence variety for the
graph hypersurface.
Ideally, we would like to use the generic case
to prove results upon restricting to configuration
hypersurfaces.
As these spaces may be highly singular and
we have multiplicity information, 
Stratified Morse Theory seems a
promising route to cohomological calculations.

The identification of the second graph polynomial
as a configuration polynomial allows our
results to apply to both polynomials in the 
parametric Feynman integral~\eqref{eq:FeynmanIntegral},
but we have only considered momenta with values in a field $K$.
In some cases of interest to physicists, 
non-scalar momenta are required,
and we have not pursued that here.
The case of quaternionic momenta is handled by
Block and Kreimer~\cite{BK10}.
The configurations defining the first and second graph
polynomials come from homology groups relative to
zero and one-dimensional subspaces of the momentum $K^{V,0}$.
Allowing higher dimensional subspaces defines higher
order graph polynomials, and it is an open problem
to find combinatorial descriptions of them.
The review~\cite{BW} suggests higher analogs to the
first two graph polynomials but from a different perspective.
Understanding  configuration polynomials for $H_1(G)$
and $H_1(G,p)$ 
as a version of the all-minor matrix-tree theorem may
provide the link between these two versions of
higher order graph polynomials.

\bibliographystyle{amsalpha}
\bibliography{bibliography}

\end{document}